\documentclass[a4paper]{amsart} 
\usepackage{amsmath}
\usepackage{amssymb}
\usepackage{verbatim}
\usepackage{amsthm}
\usepackage{mathrsfs}
\usepackage{graphicx}
\usepackage{mathtools}
\usepackage{caption}
\usepackage{subcaption}
\usepackage{url}
\usepackage{epstopdf}
\usepackage{pgf,tikz}
\usepackage{todonotes}
\usepackage{bbm}
\usepackage[notref,notcite,color]{showkeys}
\usepackage{extarrows}
\usetikzlibrary{arrows}
\usepackage{algorithm}
 \usepackage{algorithmicx}
\usepackage{amssymb,amsthm}    
\usepackage{graphicx}          
\usepackage{amsmath}         
\usepackage{caption}
\usepackage{subcaption}
\usepackage{dsfont}
\usepackage{listings}
\usepackage{bbm}
\usepackage{color}
\usepackage{ dsfont }
\usepackage{enumerate}
 \usepackage{relsize}
\usepackage{xfrac}

\renewcommand{\Delta}{\triangle}
\newcommand{\bbZ}{\mathbb Z}
\newcommand{\bbE}{\mathbb E}

\newcommand{\bbR}{\mathbb R}
\newcommand{\cA}{\mathcal{A}}

\DeclarePairedDelimiter\floor{\lfloor}{\rfloor}

\definecolor{darkblue}{rgb}{0,0,0.7}

\definecolor{darkgreen}{rgb}{0.01,0.75,0.24}

 
\def \Ee[#1]{\mathcal{E}^{\text{{#1}}}}

\def\pa[#1,#2]{\frac{\partial {#1}}{\partial {#2}} }

\def\idom[#1,#2,#3]{\int_{#1}\hspace{1pt} {#2} \hspace{1pt} \text{d}{#3}}
\def\res[#1,#2]{\left.{#1}\right|_{#2}}

\def\var[#1,#2]{\langle \delta \mathcal{E}^{\text{{#1}}}({#2}),v\rangle}
\def\vars[#1,#2,#3]{\langle \delta^2\mathcal{E}^{\text{{#1}}}({#2})v,{#3}\rangle}
\def\vard[#1,#2,#3,#4]{\langle \delta\mathcal{E}^{\text{{#1}}}({#2})-\delta\mathcal{E}^{\text{{#3}}}({#4}),v\rangle}

\newcommand{\calI}{\mathcal{I}}


\newcommand{\calB}{\mathcal{B}}

\newcommand{\bI}{\mathbf{I}}

\newcommand{\calA}{\mathcal{A}}
\newcommand{\fraku}{\mathfrak{u}}

\DeclareMathOperator*{\argmin}{arg\,min}


\newcommand{\be}{\begin{equation}}
\newcommand{\en}{\end{equation}}
\newcommand{\ben}{\begin{equation*}}
\newcommand{\enn}{\end{equation*}}
\newcommand{\bea}{\begin{aligned}}
\newcommand{\ena}{\end{aligned}}

\def\ba#1\ena{\begin{align}#1\end{align}}
\def\ban#1\enan{\begin{align*}#1\end{align*}}



\theoremstyle{plain}
\newtheorem{thm}{Theorem}[section]

\newtheorem{lem}[thm]{Lemma}

\newtheorem{cor}[thm]{Corollary}
\newtheorem{prop}[thm]{Proposition}
\newtheorem{assumption}[thm]{Assumption}

\newtheorem{proposition}[thm]{Proposition}

\newtheorem{remark}[thm]{Remark}

\numberwithin{equation}{section}

\begin{document}

\title[Tikhonov Regularization Within Ensemble Kalman Inversion]{Tikhonov 
Regularization Within\\ Ensemble Kalman Inversion}
\author[N. K. Chada] {Neil K. Chada}
\address{Department of Statistics and Applied Probability, National University of Singapore, 119077, Singapore}
\email{neil.chada@nus.edu.sg}

\author[A. M. Stuart] {Andrew M. Stuart}
\address{Computing and Mathematical Sciences, California Institute of Technology, Pasadena, CA99125, USA}
\email{astuart@caltech.edu}

\author[X. T. Tong] {Xin T. Tong}
\address{Department of Mathematics, National University of Singapore, 119077, Singapore}
\email{mattxin@nus.edu.sg}

\begin{abstract}
{Ensemble Kalman inversion is a parallelizable methodology
for solving inverse or parameter estimation problems. Although it is based
on ideas from Kalman filtering, it may be viewed as
a derivative-free optimization method. In its most basic form it regularizes
ill-posed inverse problems through the subspace property: the solution
found is in the linear span of the initial ensemble employed. In this work we 
demonstrate how further regularization can be imposed, incorporating prior 
information about the underlying unknown. In particular we study how to
impose  Tikhonov-like Sobolev penalties. As well as introducing 
this modified ensemble Kalman inversion methodology, we also 
study its continuous-time limit, proving ensemble collapse; in the
language of multi-agent optimization this may be viewed as reaching consensus.
We also conduct a suite of numerical experiments to highlight the benefits of 
Tikhonov regularization in the ensemble inversion context.}
\end{abstract}

\maketitle

\bigskip
\textbf{AMS subject classifications:}  35Q93, 58E25, 65F22, 65M32   \\
\textbf{Keywords}: Ensemble Kalman inversion, Bayesian inverse problems, Tikhonov \\ regularizartion, long-term behaviour \\

\section{introduction}
\label{sec:intro}

Inverse problems are ubiquitous in science and engineering. They occur in numerous applications, 
such as recovering permeability from measurement of flow in a porous medium \cite{LR09, ORL08}, 
or locating pathologies via medial imaging \cite{KS04}. Mathematically speaking, an inverse 
problem may be formulated as the recovery of parameter $u \in X$ from noisy data $y \in Y$ 
where the parameter $u$ and data $y$ are related by 
\begin{equation}
\label{eq:inv}
y=G(u) + \eta,
\end{equation}
$G$ is an operator from the space of parameters to observations, and $\eta$ represents noise; in
this paper we will restrict to $X,Y$ being separable Hilbert spaces. 
Inverse problems are typically solved through two 
competing methodologies: the  deterministic optimization approach 
\cite{EHNR96} and the probabilistic  Bayesian  approach \cite{KS04}. 
The former is based on defining a loss function $\ell(G(u),y)$ which one aims to minimize; a regularizer $R(u)$ that incorporates  prior information about $u$  is commonly added to improve the inversion  \cite{BB18}. The Bayesian approach 
instead views $u,y$ and $\eta$ as random variables and focusses on the
conditional distribution of $u|y$ via Bayes' Theorem as the solution; this  approach has received recent attention since it provides representation of the underlying uncertainty; it may be formulated even in the infinite-dimensional setting \cite{AMS10}. The optimization and Bayesian approaches are linked via the notion
of the maximum a posteriori (MAP) estimator through which the mode of
the conditional distribution on $u|y$ is shown to correspond to optimization of a 
regularized loss function \cite{ABDH18,DLSV13,BH15,KS04,LS18}. 

Ensemble Kalman inversion (EKI) is a recently proposed inversion
methodology that lies at the interface between  the deterministic and 
probabilistic approaches \cite{CIRS18,ILS13,ORL08}. 
It is based on the ensemble Kalman filter (EnKF) \cite{GE09,GE03,LSZ15,SR19}, which is an algorithm originally designed for high dimensional state estimation, derived by combining sequential Bayesian methods  with an approximate Gaussian ansatz. EKI applies EnKF to the inverse problem setting by introducing a trivial 
dynamics for the unknown. The algorithm works by iteratively updating an ensemble 
of candidate solutions $\{u^{(j)}_n\}_{j=1}^J$ from iteration index $n$ to $n+1$; 
here $j$ indexes the ensemble and $J$ denotes the size of the ensemble. The basic form of the
algorithm is as follows. Define the empirical means 
\[
\bar{u}_n = \frac{1}{J}\sum^{J}_{j=1}u^{(j)}_n, \quad \bar{G}_n = \frac{1}{J}\sum^{J}_{j=1}G(u^{(j)}_n),
\]
and covariances
\begin{subequations}
\begin{align*}
C^{uu}_{n} & = \frac{1}{J}\sum^{J}_{j=1} \bigl(u_n^{(j)} - \bar{u}_n\bigr)\otimes 
 \bigl(u_n^{(j)} - \bar{u}_n\bigr), \quad 
C^{up}_{n} = \frac{1}{J}\sum^{J}_{j=1} \bigl(u_n^{(j)} - \bar{u}_n\bigr)\otimes ({G}
\bigl(u_n^{(j)}) - \bar{{G}}_n\bigr),\\ 
C^{pp}_{n} & = \frac{1}{J}\sum^{J}_{j=1}  
\bigl({G}(u_n^{(j)}) - \bar{{G}}_n\bigr)
\otimes   \bigl({G}(u^{(j)}_n) - \bar{{G}}_n\bigr).
\end{align*}
\end{subequations}
Then the EKI update formulae are 
\begin{equation}
\label{eq:update}
u^{(j)}_{n+1} = u^{(j)}_n + C^{up}_{n} \big(C^{pp}_n +  \Gamma\big)^{-1}\big(y^{(j)}_{n+1} - {G}(u^{(j)}_n)\big),
\end{equation}
where the artificial observations are given by
\begin{equation}
\label{eq:data}
y^{(j)}_{n+1} = y+ \xi^{(j)}_{n+1}, \quad \xi^{(j)}_{n+1} \sim \mathcal{N}(0,\Gamma')
\quad \rm{i.i.d.}.
\end{equation}
Here an implicit assumption is that $\eta$ is additive centred Gaussian noise
with covariance $\Gamma.$
Typical choices for $\Gamma'$ include $\mathbf{0}$ and $\Gamma$. 
The history of the development of the method, which occurred primarily
within the oil industry, may be found in \cite{ORL08}; the general and
application-neutral formulation of the method as presented here 
may be found in \cite{ILS13}.

For linear $G$ the method provably optimizes the standard least
squares loss function over a finite dimensional subspace \cite{SS17};
for nonlinear $G$ similar behaviour is observed empirically in \cite{ILS13}. 
However the ensemble does not, in general, accurately capture posterior 
variability; this is demonstrated theoretically in \cite{ESS15} and
numerically in \cite{ILS13,LS12}. For this reason we focus on the
perspective of EKI as a derivative-free optimization method, somewhat
similar in spirit to the paper \cite{ZNZ08} concerning
the EnKF for state estimation. Viewed in this way EKI may be seen
as part of a wider class of tools based around multi-agent
interacting systems which aim to optimize via consensus
\cite{PTTM17}. Within
this context of EKI as an optimization tool for inversion,
a potential drawback is the issue of how to incorporate 
regularization. It is demonstrated in \cite{LR09,ILS13} that
the updated ensemble lies within the linear span of the initial ensemble
and this is a form of regularization since it restricts the solution
to a finite dimensional space. However the numerical evidence in
\cite{ILS13} demonstrates that overfitting may still occur, and this
led to the imposition of iterative regularization by
analogy with the Levenburg-Marquardt approach, a method pioneered
in \cite{MAI16}; see \cite{CIRS18} for an application of this approach.

There are a number of approaches to regularization of ill-posed
inverse problems which are applied in the deterministic optimization
realm. Three primary ones are: (i) optimization over a compact set;
(ii) iterative regularization through early stopping and (iii) 
Tikhonov penalization of the misfit. The standard EKI imposes
approach (i) and the method of  \cite{MAI16} imposes approach (ii).
The purpose of this paper is to demonstrate how 
approach (iii), Tikhonov regularization \cite{BB18,EHNR96},
may also be incorporated into the EKI. Our primary contributions are:

\begin{itemize}

\item We present a straightforward modification of the standard EKI
methodology from \cite{ILS13} which allows for incorporation of Tikhonov
regularization, leading to the TEKI (Tikhonov-EKI) approach. 

\item We study the TEKI approach analytically, building on the
continuous time analysis and gradient flow structure for EKI developed 
in \cite{SS17}; in particular we prove that, for general nonlinear 
inverse problems, the TEKI flow exhibits consensus, asymptotically, 
that is ensemble collapse; for EKI this  result is only known to be true 
in the linear case. 

\item We describe numerical experiments which highlight the benefits
of TEKI over EKI, using inverse problems arising from the eikonal equation
\cite{EDS11} and from Darcy flow \cite{ILS13}. 

\end{itemize}

The outline of the paper is as follows.
In Section \ref{sec:TEKI} we describe the TEKI methodology, introducing
the modified inverse problem which incorporates the additional regularization. 
Section \ref{sec:CTS} is devoted to the derivation of a continuous time analog of the resulting  algorithm, and we also study its properties in the case of linear inverse problems. In Section \ref{sec:NUM} we present numerical 
experiments demonstrating the benefits of using TEKI over EKI, using
an inverse problem arising from the eikonal equation. 
We conclude in Section \ref{sec:CON} with an overview and further research 
directions to consider. {The appendix, Section \ref{sec:app}, contains
supplementary material in the form of further numerical examples, analogous
to Section \ref{sec:NUM}, but replacing the eikonal inverse problem
by one based on Darcy flow; these experiments demonstrate the robustness 
of the TEKI method over different choices of inverse problems.}

\section{EKI With Tikhonov Regularization}
\label{sec:TEKI}
In this section we derive the TEKI algorithm, the 
regularized variant of the EKI algorithm which we introduce in this paper.
We start by recalling how classical Tikhonov regularization works, and 
then demonstrate how to apply similar ideas within EKI.

Assuming that we model $\eta \sim N(0,\Gamma)$ in \eqref{eq:inv} the 
resulting loss function is in the $L^2$ form 
\begin{equation}
\label{eq:loss}
\ell_{Y}(y',y)=\frac12 \| \Gamma^{-1/2}(y'-y)\|_Y^2.
\end{equation}
Recall (see the previous section)
that EKI minimizes 
\begin{equation}
\label{eq:loss2}
\ell_{Y}(G(u),y)=\frac12 \| \Gamma^{-1/2}(G(u)-y)\|_Y^2
\end{equation}
within a subspace defined by the initial ensemble,
provably in the linear case and with similar behaviour observed
empirically in the nonlinear case. 

Tikhonov regularization is associated with defining 
\begin{equation}
\label{eq:reg}
R(u) = \frac{{\boldsymbol{\lambda}}}{2} \|u\|_K^2,
\end{equation}
where $K$ is a Hilbert space which is continuously and compactly embedded
into $X$, and minimizing the sum of $\ell(G(u),y)$ and $R(u).$ {The regularization parameter ${\boldsymbol{\lambda}}>0$ may be tuned to trade-off between 
data fidelity and parsimony, thereby avoiding overfitting.} This may be 
connected to Bayesian regularization if the prior on $u$
is the Gaussian measure $N(0,\boldsymbol{\lambda}^{-1}C_0)$, with $C_0$ trace-class and
strictly positive-definite on $X$. 
Then $K$ is a Hilbert space $K$ equipped with inner product  $\langle C_0^{-\frac12}\cdot, C_0^{-\frac12}\cdot \rangle_X$ and norm 
$\|\cdot\|_{K}=\|C_0^{-\frac12}\cdot\|_X;$ it 
is known as the Cameron-Martin space associated with the Gaussian prior. 
Minimizing  the sum of $\ell(G(u),y)$ and $R(u)$ corresponds to finding a
mode of the distribution \cite{DLSV13}.

To incorporate such prior information into the EKI algorithm
we proceed as follows. We first
extend \eqref{eq:inv} to the equations
\begin{subequations}
\begin{align}
\label{eq:inv_re1}
y&=G(u) + \eta_1, \\ 
\label{eq:inv_re2}
0&= u+\eta_2, 
\end{align}
\end{subequations}
where $\eta_1, \eta_2$ are independent random variables
distributed as $\eta_1 \sim N(0,\Gamma), \eta_2 \sim N(0, {\boldsymbol{\lambda}}^{-1}C_0).$ Let $Z=Y \times X$, we then define the new variables $z,\eta$ 
and mapping $F: X \times X \mapsto Z$ as follows:
\[
z=\begin{bmatrix}
y\\
0
\end{bmatrix},\quad
F(u)=\begin{bmatrix}
{G}(u)\\
u
\end{bmatrix},
\quad
\eta=\begin{bmatrix}
\eta_1\\
\eta_2
\end{bmatrix},
\]
noting that then
\[
\eta \sim N(0,\Sigma), \quad
\Sigma =
\begin{bmatrix}
\Gamma & 0\\
0 & {\boldsymbol{\lambda}}^{-1}C_0
\end{bmatrix}.
\]

We then consider the inverse problem
\begin{equation}
\label{eq:inv_re}
  z = F(u) + \eta
\end{equation}
which incorporates the original equation \eqref{eq:inv} via (\ref{eq:inv_re1}) and
the prior information via (\ref{eq:inv_re2}).
We now define the ensemble mean
\[
\bar{F}_n = \frac{1}{J}\sum^{J}_{j=1}F(u^{(j)}_n),
\]
and covariances
\[
B^{up}_{n} = \frac{1}{J}\sum^{J}_{j=1} \bigl(u_n^{(j)} - \bar{u}_n\bigr)\otimes ({F}
\bigl(u_n^{(j)}) - \bar{{F}}_n\bigr), \quad B^{pp}_{n} = \frac{1}{J}\sum^{J}_{j=1}
\bigl({F}(u_n^{(j)}) - \bar{{F}}_n\bigr)
\otimes   \bigl({F}(u^{(j)}_n) - \bar{{F}}_n\bigr).
\]
The TEKI update formulae are then found by applying the EKI
algorithm to \eqref{eq:inv_re} to obtain
\begin{equation}
\label{eq:updateT}
u^{(j)}_{n+1} = u^{(j)}_n + B^{up}_{n} \big(B^{pp}_n +  \Sigma\big)^{-1}\big(z^{(j)}_{n+1} - {F}(u^{(j)}_n)\big),
\end{equation}
where
\begin{equation}
\label{eq:dataT}
z^{(j)}_{n+1} = z+ \zeta^{(j)}_{n+1}, \quad \zeta^{(j)}_{n+1} \sim 
\mathcal{N}(0,\Sigma')
\quad \rm{i.i.d.}
\end{equation}
Typical choices for $\Sigma'$ are $\mathbf{0}$ and $\Sigma$. 
Notice that the resulting $L^2$ loss function \eqref{eq:loss} is, in this
case, 
\begin{equation}
\label{eq:loss3}
\ell_Z(z',z)=\frac12 \| \Sigma^{-1/2}(z'-z)\|_{Z}^2.
\end{equation}
leading, with $z'=F(u)$, to the loss function
\begin{equation}
\label{eq:func1}
\calI(u;y) := \frac{1}{2}\| \Gamma^{-1/2}(y - G(u))\|_X^2 + \frac{{\boldsymbol{\lambda}}}{2}  \|u\|_K^2.
\end{equation}
It is in this sense that TEKI regularizes EKI, the latter being associated
with the unregularized objective function \eqref{eq:loss2}.

\begin{remark}
\label{rem:ISP}
Both the EKI algorithm \eqref{eq:update} and the TEKI algorithm \eqref{eq:updateT}
have the property that all ensemble members remain in the linear span of the initial
ensemble for all time. This is proved in \cite{SS17} for EKI; the proof for 
TEKI is very similar and hence not given. For EKI (resp. TEKI) it follows simply from the fact that $C_n^{up}$
(resp. $B_n^{up}$) projects onto the linear span of the current ensemble and then 
uses an induction.
\end{remark}

\section{Continuous Time Limit of TEKI}
\label{sec:CTS}

In this section we aim to study the use of Tikhonov regularization within EKI 
through analysis of a continuous time limit of TEKI.
 {For economy of notation we assume the regularization constant 
${\boldsymbol{\lambda}}$ to take the value $1$ throughout. 
This incurs no loss of generality, since one can always replace $(\boldsymbol{\lambda},C_0)$ with $(1, \boldsymbol{\lambda}^{-1}C_0)$, and the TEKI formulation remains the same. } 
 
In subsection \ref{ssec:DCTS} we derive the continuous time limit of the TEKI
algorithm whilst in subsection \ref{ssec:ACTS} we state and prove 
the general existence Theorem \ref{t:eu} for the TEKI flow. 
In subsection \ref{ssec:obs}
we demonstrate ensemble collapse of the TEKI flow, Theorem \ref{thm:Ctabove};
this shows that the ensemble members reach consensus.
We also prove two lemmas which together characterize an invariant subspace
property of TEKI flow, closely related to Remark \ref{rem:ISP}. 
Subsection \ref{ssec:const} contains derivation of
two {\em a priori} bounds on the TEKI
flow, one in the linear setting and the other in the general setting. 
In the final subsection \ref{ssec:lt} we study the long-time behaviour of TEKI flow
in the linear setting, generalizing related work on the EKI flow in \cite{SS17}.

\subsection{Derivation Of Continuous Time Limit}
\label{ssec:DCTS}
We first recall the derivation of  the continuous time limit of the EKI 
algorithm \eqref{eq:update} from \cite{SS17} as that for TEKI is very similar.
For this purpose, we set $\Gamma'=0$, rescale
$\Gamma \mapsto h^{-1}\Gamma$ so that (approximately for $h \ll 1$) 
$(C_n^{pp}+\Gamma)^{-1}\mapsto h\Gamma^{-1}.$ 
We then view $u_n^{(j)}$ as an approximation of
a continuous function $u^{(j)}(t)$ at time $t=nh$ and let $h \to 0$. 
To write down the resulting flow succintly we
let $\mathfrak{u} \in {X}^J$ denote the
collection of $\{u^{(j)}\}_{j \in \{1,\dots,J\}}.$
Now define
$$D_{jk}(\mathfrak{u}):=\langle \Gamma^{-1/2} (G(u^{(j)})-y),\Gamma^{-1/2}(G(u^{(k)})-\bar{G}) \rangle_Y,$$
where
$$\bar{u}:=\frac{1}{J}\sum_{m=1}^J u^{(m)}, \quad \bar{G}:=\frac{1}{J}\sum_{m=1}^J G(u^{(m)}).$$
The continuum limit  of \eqref{eq:update} is then
\begin{align}
\label{eq:has}
\frac{du^{(j)}}{dt}
&=-\frac1J \sum_{k=1}^J (u^{(k)}-\bar{u})\otimes (G(u^{(k)})-\bar{G})\Gamma^{-1}(G(u^{(j)})-y) \\
&=-\frac{1}{J}\sum\limits_{k=1}^J D_{jk}(\mathfrak{u})(u^{(k)}-\bar{u})
=-\frac{1}{J}\sum\limits_{k=1}^J D_{jk}(\mathfrak{u})u^{(k)}.
\end{align}
Here we used the fact that replacing $u^{(k)}(t)$ by $u^{(k)}(t)-\bar{u}(t)$ does not
change the flow since $D_{jk}(\mathfrak{u}(t))$ 
sums to zero over $k$; we will use this
fact occasionally in what follows, and without further comment.
The equations may be written as
\begin{equation}
\label{eq:E1}
\frac{d\fraku}{dt}=-\frac{1}{J}D(\fraku)\fraku
\end{equation}
for appropriate Kronecker operator
$D(\fraku) \in {\mathcal{L}}(X^J,X^J)$
defined from the $D_{jk}(\fraku).$
Note also that we hid the dependence on time $t$ in our derivation above, and we will often do so in the discussion below.

The resulting flow is insightful because it demonstrates that, in the linear
case $G(\cdot)=A\cdot$, each ensemble member undergoes a gradient descent for the 
loss function \eqref{eq:loss} preconditioned by the empirical covariance $C(\fraku)$
defined by
\begin{equation}
\label{eq:C}
C(\fraku)  = \frac{1}{J}\sum^{J}_{m=1} \bigl(u^{(m)} - \bar{u}\bigr)\otimes
 \bigl(u^{(m)} - \bar{u}\bigr);
\end{equation}
specifically we have
\begin{equation}
\label{eq:has2}
\frac{du^{(j)}}{dt}=-C(\mathfrak{u})\nabla_u \ell_Y (Au^{(j)},y). 
\end{equation}
Note that although each ensemble member performs a gradient flow,
they are coupled through the empirical covariance.

We now carry out a similar derivation
for the TEKI algorithm; doing so will demonstrate explicitly that 
the method introduces a Tikhonov regularization. 
Consider the TEKI algorithm \eqref{eq:updateT}, setting $\Sigma'=0$, rescaling
$\Sigma \mapsto h^{-1}\Sigma$ and  viewing $u_n^{(j)}$ as an approximation of
a continuous function $u^{(j)}(t)$ at time $t=nh.$ The limiting flow is
\begin{align}
\frac{du^{(j)}}{dt} 
\notag
&=
-\frac{1}{J}\sum^{J}_{k=1} \bigl(u^{(k)} - \bar{u}\bigr)\otimes ({F}
\bigl(u^{(k)}) - \bar{{F}}\bigr)\Sigma^{-1}\bigl(F(u^{(j)})-z\bigr)\\
\notag
=&
-\frac{1}{J}\sum^{J}_{k=1} \left(\left\langle \Gamma^{-1/2} (G(u^{(k)})-\bar{G})  ,\Gamma^{-1/2}(G(u^{(j)})-y)\right\rangle_Y+\langle  u^{(j)}, u^{(k)}-\bar{u}\rangle_K \right) \bigl(u^{(k)} - \bar{u}\bigr)\\
\notag
&=-\frac{1}{J}\sum\limits_{k=1}^J \Bigl(D_{jk}(\mathfrak{u})
+\langle u^{(j)}, u^{(k)}-\bar{u} \rangle_K\Bigr)(u^{(k)}-\bar{u})
\\\label{eq:has3}
&=-\frac{1}{J}\sum\limits_{k=1}^J E_{jk}(\mathfrak{u})
(u^{(k)}-\bar{u}).
\end{align}
where
$$E_{jk}(\mathfrak{u}):=D_{jk}(\mathfrak{u})
+\langle u^{(j)}, u^{(k)}-\bar{u} \rangle_K.$$
This may be written as
\begin{equation}
\label{eq:E2}
\frac{d\fraku}{dt}=-\frac{1}{J}E(\fraku)\fraku
\end{equation}
for appropriate Kronecker matrix 
$E(\fraku) \in {\mathcal{L}}(X^J,X^J)$ 
defined from the $E_{jk}(\fraku).$

We note that the flow may be written as
\begin{align}
\frac{du^{(j)}}{dt}
\label{eq:has4}
=-\frac{1}{J}\sum\limits_{k=1}^J D_{jk}(\mathfrak{u})(u^{(k)}-\bar{u})
-C(\mathfrak{u}) \nabla_u R(u^{(j)})
\end{align}
where
\begin{equation} 
\label{eq:R}
R(u)=\frac12\|u\|_K^2.
\end{equation}
So we see explicitly that the algorithm includes a Tikhonov regularization,
preconditioned by the empirical covariance $C(\mathfrak{u}).$ 
In the linear case $G(\,\cdot\,)=A\,\cdot$, define
\begin{equation}
\label{eq:J}
\calI_{\rm{linear}}(u;y)=\ell_Y(Au^{(j)},y)+R(u).
\end{equation}
Noting that,
with $\ell_Y(\cdot,\cdot)$ and $R(\cdot)$ defined by 
\eqref{eq:loss} and \eqref{eq:R}, this coincides with the
 \eqref{eq:func1} when specialized to the linear case.
In particular, we also see that the TEKI flow has the form
\begin{equation}
\label{eq:has5}
\frac{du^{(j)}}{dt}=-C(\mathfrak{u})\nabla_u \calI_{\rm{linear}}(u^{(j)};y).
\end{equation}
Each ensemble member thus undergoes  a gradient flow with respect to the Tikhonov regularized least squares loss function $\calI_{\rm{linear}}(u;y)$, preconditioned by the empirical covariance of the collection of all the ensemble members.

\begin{remark}
Additive covariance inflation, as described in \cite{SS17}, modifies the
EKI gradient flow \eqref{eq:has2} by addition of a fixed invertible
covariance matrix to the empirical covariance. In contrast \eqref{eq:has5}
fixes the empirical covariance and instead modifies the objective function
by addition of a regularizer.  
\end{remark}

\subsection{Existence For TEKI Flow}
\label{ssec:ACTS}

Recall that the Cameron-Martin space associated with the Gaussian measure
$N(0,C_0)$ on $X$ is the domain of $C_0^{-\frac12}.$ We have the following
result. 


\begin{thm}
\label{t:eu}
Suppose the initial ensemble $\{u^{(j)}(0)\}_{j=1}^J$ 
is chosen to lie in $K$ and that $G:K \mapsto Y$ is $C^1.$
Let ${\mathcal A}$ denote the linear span of $\{u^{(j)}(0)\}_{j=1}^J$ and
${\mathcal A}^J$ the $J$-fold Cartesian product of this set. 
Then equation \eqref{eq:E2} has
a unique solution in $C^1([0,T);{\mathcal A}^J)$ for some $T>0$.
\end{thm}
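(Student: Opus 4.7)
The plan is to reduce the infinite-dimensional flow \eqref{eq:E2} to a finite-dimensional ODE on the invariant subspace $\mathcal{A}^J$, and then invoke the classical Picard--Lindel\"of theorem for local existence and uniqueness.

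First, I would verify the invariance of $\mathcal{A}^J$ under the vector field. Inspecting the right-hand side of \eqref{eq:E2}, the time derivative of each $u^{(j)}$ is a linear combination of the vectors $u^{(k)}-\bar{u}$ for $k=1,\dots,J$, with scalar coefficients $-\frac{1}{J}E_{jk}(\mathfrak{u})$. Since $\mathcal{A}$ is a linear subspace of $K$ and $\bar{u}\in\mathcal{A}$ whenever each $u^{(k)}\in\mathcal{A}$, the vector $(u^{(k)}-\bar{u})$ lies in $\mathcal{A}$. Hence the vector field maps $\mathcal{A}^J$ into $\mathcal{A}^J$, and any solution starting in $\mathcal{A}^J$ remains there as long as it exists. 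This is precisely the subspace property alluded to in Remark \ref{rem:ISP}, now in the continuous-time setting.

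Next, I would exploit finite-dimensionality. By construction $\dim \mathcal{A}\le J<\infty$, so $\mathcal{A}^J$ is a finite-dimensional Hilbert space (with inner product inherited from $K^J$). On $\mathcal{A}$, the $K$-norm, the $X$-norm, and any other norm are equivalent, so one may identify $\mathcal{A}^J$ with a Euclidean space of appropriate dimension. The right-hand side of \eqref{eq:E2} defines a vector field $V:\mathcal{A}^J\to\mathcal{A}^J$ whose components are built from:
\begin{itemize}
\item the map $u^{(j)}\mapsto G(u^{(j)})$, which is $C^1$ from $K$ to $Y$ by hypothesis and hence $C^1$ as a map from $\mathcal{A}$ to $Y$;
\item the bounded bilinear pairing $\langle \Gamma^{-1/2}\cdot,\Gamma^{-1/2}\cdot\rangle_Y$, used to form $D_{jk}(\mathfrak{u})$;
\item the bilinear pairing $\langle \cdot,\cdot\rangle_K$ restricted to $\mathcal{A}\times\mathcal{A}$, used to form the regularization contribution to $E_{jk}(\mathfrak{u})$;
\item scalar multiplication of the fixed elements $u^{(k)}-\bar{u}\in\mathcal{A}$.
\end{itemize}
Each of these building blocks is $C^1$ in $\mathfrak{u}$ on $\mathcal{A}^J$, and the composition rules give that $V$ is $C^1$ on $\mathcal{A}^J$, hence locally Lipschitz.

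Finally, with a locally Lipschitz vector field on a finite-dimensional space and the initial condition $\mathfrak{u}(0)=(u^{(1)}(0),\dots,u^{(J)}(0))\in\mathcal{A}^J$, the Picard--Lindel\"of theorem yields a unique maximal solution $\mathfrak{u}\in C^1([0,T);\mathcal{A}^J)$ for some $T>0$. Uniqueness in the larger space $K^J$ then also follows from the invariance of $\mathcal{A}^J$: any $C^1$ solution in $K^J$ must agree, by Picard's uniqueness, with this finite-dimensional solution on its domain. The only subtle point, and the one I would be most careful with, is verifying that the $K$-inner product contribution $\langle u^{(j)},u^{(k)}-\bar{u}\rangle_K$ is genuinely well-defined and smooth; this is handled by the hypothesis that the initial ensemble lies in $K$, which through invariance propagates to all times, so we never need to evaluate $\|\cdot\|_K$ outside its domain.
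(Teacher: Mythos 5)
Your proposal is correct and follows essentially the same route as the paper: restrict to the finite-dimensional invariant subspace $\mathcal{A}^J$, verify that the vector field is $C^1$ (the paper does this in Lemma \ref{lem:checkderivate} by writing out the Frechet partial derivatives of $D_{jk}$ and of the $K$-inner-product term, while you argue it by composition of smooth building blocks), and then apply standard ODE theory to get local existence and uniqueness. The only difference is presentational, not mathematical.
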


\begin{remark}
The same theorem may be proved for \eqref{eq:E1} under the milder assumptions
that the initial ensemble $\{u^{(j)}(0)\}_{j=1}^J$ 
is chosen to lie in $X$ itself and that  $G: X \mapsto Y$ is $C^1.$  
\end{remark}


\begin{proof}[Proof of Theorem \ref{t:eu}]
The right hand-side of \eqref{eq:E2} is of the form $E(\fraku)\fraku$
and  $E:\calA^J \mapsto {\mathcal{L}}(\calA^J,\calA^J)$. Thus
it suffices to show that $E$ is 
differentiable at $\fraku \in \calA^J;$ then the right hand side of \eqref{eq:E2}
is locally Lipschitz as a mapping of the finite dimensional space $\calA^J$ into itself
and standard ODE theory gives a local in time solution. 
Lemma \ref{lem:checkderivate} verifies the required differentiability.
\end{proof}

\begin{lem}
\label{lem:checkderivate}
The function $E:\calA^J \mapsto {\mathcal{L}}(\calA^J,\calA^J)$
is Frechet differentiable with respect to $\fraku\in \calA^J$. 
\end{lem}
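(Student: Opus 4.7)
The plan is to reduce the claim to Frechet differentiability of the $J^2$ scalar-valued functions $E_{jk}:\calA^J\to \mathbb{R}$ and then dispatch each of them by the chain rule, exploiting that $\calA^J$ is finite dimensional and that $G$ is $C^1$ on a Hilbert space containing $\calA$. More precisely, since each $E_{jk}(\fraku)$ is a scalar, the operator $E(\fraku)\in \mathcal{L}(\calA^J,\calA^J)$ has the Kronecker form $E(\fraku)=[E_{jk}(\fraku)]_{j,k=1}^J\otimes I_\calA$, so that $E$ factors as $\fraku\mapsto (E_{jk}(\fraku))_{j,k}\in\mathbb{R}^{J\times J}$ followed by a fixed bounded linear embedding into $\mathcal{L}(\calA^J,\calA^J)$. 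Frechet differentiability of $E$ therefore follows from Frechet differentiability of each $E_{jk}$.

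Next I would split
\[
E_{jk}(\fraku)=D_{jk}(\fraku)+\langle u^{(j)},\,u^{(k)}-\bar u\rangle_K.
\]
The second summand is a continuous bilinear form in the entries of $\fraku$, and continuous multilinear forms on finite dimensional spaces are $C^\infty$, hence in particular Frechet differentiable. For the first summand I would use the chain rule: the coordinate projection $\pi_j:\calA^J\to \calA$, $\fraku\mapsto u^{(j)}$, is bounded linear; by the hypothesis of Theorem \ref{t:eu} the initial ensemble lies in $K$, so $\calA\subset K$ and the restriction $G|_\calA:\calA\to Y$ inherits $C^1$ regularity from $G:K\to Y$. Consequently $\fraku\mapsto G(u^{(j)})$ and $\fraku\mapsto G(u^{(k)})-\bar G(\fraku)$ are $C^1$ maps from $\calA^J$ to $Y$, and composing with the continuous bilinear form $(a,b)\mapsto \langle \Gamma^{-1/2}a,\Gamma^{-1/2}b\rangle_Y$ yields a Frechet differentiable scalar function. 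Combining with the Tikhonov piece gives the claim.

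I do not anticipate a genuine obstacle here; the argument is essentially bookkeeping around the chain rule. The only mildly delicate point is ensuring that the $C^1$ assumption on $G:K\to Y$ transfers to $\calA$. This is handled by the observation that $\calA$ is a finite dimensional subspace of $K$, so the inclusion $\calA\hookrightarrow K$ is continuous (indeed bounded linear), and restriction of any $C^1$ map to a continuously embedded subspace is again $C^1$. Everything else is standard finite dimensional differential calculus.
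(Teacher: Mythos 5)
Your argument is correct and follows essentially the same route as the paper: the same splitting of $E_{jk}$ into the data-misfit term $D_{jk}$ and the bilinear Tikhonov term, with differentiability coming from the $C^1$ assumption on $G:K\to Y$ and the finite dimensionality of $\calA$. The only difference is presentational — the paper writes out the Frechet partial derivatives of $D_{jk}$ and of $P_{jk}(\fraku)=\langle u^{(j)},u^{(k)}-\bar u\rangle_K$ explicitly and checks boundedness, whereas you invoke the chain rule and smoothness of continuous multilinear maps abstractly; both are valid.
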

\begin{proof}
To prove this we write down the Frechet partial derivative of each component of $E$ 
with respect  to $u^{(i)}$, applied in perturbation direction $v\in \mathcal{A}$; we use
$\nabla G(u)$ to denote the Frechet derivative of $G:K \mapsto Y$ at point $u \in K$. 
Now note that 
\begin{align*}
\left\langle v,\frac{\partial}{\partial u^{(i)} } D_{jk}(\fraku)\right \rangle_K &= 
-\frac1J \langle \Gamma^{-1/2}(G(u^{(j)})-y), \Gamma^{-1/2}\nabla G(u^{(i)} ) v\rangle_Y\\
&\quad+\mathbf{1}_{i=j}\langle \Gamma^{-1/2}\nabla G(u^{(j)}) v, \Gamma^{-1/2}(G(u^{(k)})-\bar{G})\rangle_Y\\
&\quad +\mathbf{1}_{i=k}\langle \Gamma^{-1/2}(G(u^{(j)})-y), \Gamma^{-1/2}\nabla G(u^{(k)}) v\rangle_Y. 
\end{align*}
When $G$ is $C^1$, $\nabla G(u^{(i)})$ is a bounded operator from $K$ to $Y$, so the quantity above is bounded. 
Next, we define
\[
P_{jk}(\fraku):=\langle u^{(j)}, u^{(k)}-\bar{u} \rangle_K,
\]
this is finite because it is bounded above by $ \|u^{(j)}\|_K\| u^{(k)}-\bar{u}\|_K$ using the Cauchy-Schwartz
inequality. Then 
\begin{align*}
\left\langle v, \frac{\partial}{\partial u^{(i)} }P_{jk}(\fraku)\right \rangle_K &= 
-\frac1J \langle  u^{(j)}, v\rangle_K+\mathbf{1}_{i=j}\langle v, u^{(k)}-\bar{u}\rangle_K+\mathbf{1}_{i=k}\langle u^{(j)},  v\rangle_K. 
\end{align*}
It is straightforward to verify this is bounded for $v\in \calA$. 
Since $E$ is formed by summing $D$ and $P$ the proof is complete.
\end{proof}

\subsection{Ensemble Collapse For TEKI Flow}
\label{ssec:obs}
From Theorem \ref{t:eu} we know that the vector space
$\calA$ is invariant for the TEKI flow. 
Furthermore, when restricted to $\calA$, $C_0$ is positive definite, 
so $\|\,\cdot\,\|_K=\|C_0^{-1/2}\,\cdot\,\|_X$ and $\|\,\cdot\,\|_X$ 
are equivalent norms on the vector space $\mathcal{A}$. In particular, the following constants are well defined and strictly positive
\begin{equation}
\label{eq:lambda}
\lambda_m:=\inf_{v\in \calA, \|v\|^2_X=1}\|v\|^2_K,\quad 
\lambda_M:=\sup_{v\in \calA, \|v\|^2_X=1}\|v\|^2_K.
\end{equation}
Note that $\lambda_m$ and $\lambda_M$ do depend on $\calA$, which is defined
through the initial choice of ensemble members. 

The empirical covariance $C(\fraku(t))$ can also be viewed as a matrix in the finite dimensional linear space $\calA$. The following theorem demonstrates
that its operator norm can be bounded from above uniformly in time, and
establishes asymptotic in time collapse of the ensemble, 
provided that the solution exists for all time.

\begin{thm}
\label{thm:Ctabove}
For the TEKI flow the following upper bound holds while a solution exists:
\[
\|C(\fraku(t))\|_X\leq \frac{1}{\|C(\fraku(0))\|^{-1}_X+2\lambda_m t}. 
\]
Here $\|C(\fraku(t))\|_X$ is the operator norm of $C(\fraku(t))$ on $(\calA, \|\,\cdot\,\|_X)$ and $\lambda_m$ is defined in \eqref{eq:lambda}. 
\end{thm}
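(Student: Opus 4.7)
The strategy is to derive a scalar Riccati-type differential inequality for the quadratic form $f_v(t):=\langle v, C(\fraku(t)) v\rangle_X$ with a fixed test vector $v\in\calA$ of unit $X$-norm, and then pass to the supremum over $v$ using positivity and self-adjointness of $C(\fraku)$ on $\calA$. The computation hinges on one scalar identity, namely $\sum_m \langle v, e^{(m)}\rangle_X = 0$ with $e^{(m)}:=u^{(m)}-\bar u$, which falls out of $\sum_m e^{(m)}=0$.

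I would start from the split form \eqref{eq:has4} of the TEKI flow, identifying $\nabla R(u)=C_0^{-1}u$ (well-defined on the finite-dimensional space $\calA$ where the $X$- and $K$-norms are equivalent), so that
\[
\tfrac{du^{(m)}}{dt}=-\tfrac{1}{J}\sum_k D_{mk}(\fraku)e^{(k)}-C(\fraku)C_0^{-1}u^{(m)}.
\]
Writing $f_v=\tfrac1J\sum_m a_m^2$ with $a_m:=\langle v,e^{(m)}\rangle_X$, differentiating, and using $\sum_m a_m=0$ to eliminate the contribution of $d\bar u/dt$, I obtain
\[
\tfrac12\tfrac{df_v}{dt}=-\tfrac{1}{J^2}\sum_{m,k} a_m D_{mk} a_k-\tfrac{1}{J}\sum_m a_m\langle C(\fraku)v, u^{(m)}\rangle_K.
\]
For the first term I use $\sum_m a_m=0$ to replace $G(u^{(m)})-y$ by $G(u^{(m)})-\bar G$ in one factor of $D_{mk}$; the quadratic form collapses to $\|\sum_m a_m\,\Gamma^{-1/2}G(u^{(m)})\|_Y^2\ge 0$, contributing nonpositively. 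For the second term the same identity permits the replacement $u^{(m)}\mapsto e^{(m)}$, and the remaining sum telescopes: $\frac{1}{J}\sum_m a_m\langle C(\fraku)v, e^{(m)}\rangle_K=\langle C(\fraku)v, C(\fraku)v\rangle_K=\|C(\fraku)v\|_K^2$ by the very definition of $C(\fraku)$ acting on $v$.

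Combining these estimates gives $f_v'(t)\le -2\|C(\fraku)v\|_K^2\le -2\lambda_m\|C(\fraku)v\|_X^2$ via \eqref{eq:lambda}, and Cauchy--Schwarz together with positive self-adjointness of $C(\fraku)$ on $\calA$ (with $\|v\|_X=1$) yields $\|C(\fraku)v\|_X^2\ge f_v(t)^2$. Hence $f_v'(t)\le -2\lambda_m f_v(t)^2$, a Bernoulli inequality that integrates to $f_v(t)\le (f_v(0)^{-1}+2\lambda_m t)^{-1}$; replacing $f_v(0)$ by the larger $\|C(\fraku(0))\|_X$ makes the bound uniform in $v$, and taking $\sup_{\|v\|_X=1}$ on the left, using $\|C(\fraku(t))\|_X=\sup_{\|v\|_X=1}f_v(t)$, closes the argument. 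The main obstacle is recognizing that both the sign of the $D$-contribution and the collapse of the Tikhonov contribution into $\|C(\fraku)v\|_K^2$ are driven by the single scalar cancellation $\sum_m a_m=0$; without it the $D$-term would not be a perfect square because of the mismatch between $y$ and $\bar G$ in $D_{mk}$, and the Tikhonov term would retain an unwanted $\bar u$ piece. Once these cancellations are spotted, the remainder is routine.
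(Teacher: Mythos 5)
Your proposal is correct, and the core computation coincides with the paper's: both reduce matters to the projected inequality $\langle v,\tfrac{d}{dt}C(\fraku)v\rangle_X\le -2\|C(\fraku)v\|_K^2\le -2\lambda_m\|C(\fraku)v\|_X^2$, using exactly the cancellation $\sum_m a_m=0$ (the paper realizes this by first writing the ODE for $u^{(j)}-\bar u$, which replaces $y$ by $\bar G$ and $u^{(j)}$ by $u^{(j)}-\bar u$, and the identity $\tfrac1J\sum_j(v_j-\bar v)(u^{(j)}-\bar u)=C(\fraku)v$ is the same telescoping you use). Where you genuinely diverge is in closing and integrating the Riccati inequality: the paper differentiates the operator norm directly along the time-dependent top eigenvector $w(t)$, using $\langle \dot w,w\rangle_X=0$ to kill the extra term, whereas you keep $v$ fixed, insert the Cauchy--Schwarz step $\|C(\fraku)v\|_X\ge\langle v,C(\fraku)v\rangle_X=f_v$ to obtain the scalar inequality $f_v'\le-2\lambda_m f_v^2$, integrate for each $v$, and only then take the supremum. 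Your route is slightly longer by one inequality but avoids the paper's implicit appeal to differentiability of the leading eigenvalue/eigenvector (delicate at eigenvalue crossings), and it yields the same constant since $\sup_{\|v\|_X=1}f_v(0)=\|C(\fraku(0))\|_X$; the paper's route is more direct when the eigenvector calculus is taken for granted. Both arguments are sound.
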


\begin{proof}
Recall the dynamical system for $u^{(j)}(t)$:
\[
\frac{d}{dt} u^{(j)}=-\frac{1}{J}\sum\limits_{k=1}^J \Bigl(D_{jk}(\mathfrak{u})
+\langle u^{(j)}, u^{(k)}-\bar{u} \rangle_K\Bigr)(u^{(k)}-\bar{u}),
\]
Averaging over $j$, we have the ordinary differential equation (ODE) for $\bar{u}(t)$. Taking the difference, we have
\begin{align*}
\frac{d}{dt} (u^{(j)}-\bar{u})&=-\frac{1}{J}\sum\limits_{k=1}^J \Bigl(\langle \Gamma^{-1/2} (G(u^{(j)})-\bar{G}), \Gamma^{-1/2}(G(u^{(k)})-\bar{G})\rangle_Y
+\langle u^{(j)}-\bar{u}, u^{(k)}-\bar{u} \rangle_K\Bigr) \\ &\times (u^{(k)}-\bar{u}).
\end{align*}
Then because $C(\fraku(t))=\frac1J \sum_{j=1}^J (u^{(j)}(t)-\bar{u}(t))\otimes (u^{(j)}(t)-\bar{u}(t))$, we find that 
\begin{align*}
\frac{dC(\fraku(t))}{dt}&=-\frac2{J^2} \sum_{j,k=1}^J\langle u^{(j)}-\bar{u}, u^{(k)}-\bar{u}\rangle_K 
(u^{(k)}-\bar{u})\otimes (u^{(j)}-\bar{u})\\
&\quad -\frac2{J^2}\sum_{j,k=1}^J\langle \Gamma^{-1/2} (G(u^{(j)})-\bar{G}), \Gamma^{-1/2}(G(u^{(k)})-\bar{G})\rangle_Y 
(u^{(k)}-\bar{u})\otimes (u^{(j)}-\bar{u}).
\end{align*}
Now we consider projecting the ODE above on  a fixed $v\in X$. Denote 
\[
v_k(t)=\langle  v, u^{(k)}(t)\rangle_X,\quad \bar{v}(t)=\langle v, \bar{u}(t) \rangle_{X}.
\] 
Note that 
\[
\langle v, (u^{(k)}-\bar{u})\otimes (u^{(j)}-\bar{u}) v\rangle_X= \langle v, u^{(k)}-\bar{u}\rangle_X \langle v, u^{(j)}-\bar{u}\rangle_X=
(v^{(k)}-\bar{v})(v^{(j)}-\bar{v}).
\]
The projection of $\frac{dC(\fraku(t))}{dt}$ on $v$ is given by 
\begin{align}
\notag
&J^2\langle v , \frac{d}{dt} C(\fraku(t))  v\rangle_X\\
\notag
 &= -2\sum_{j,k=1}^J \langle u^{(j)}-\bar{u}, u^{(k)}-\bar{u}\rangle_K
(v_k-\bar{v}) \cdot (v_j-\bar{v})\\
\notag
&\quad -2\sum_{j,k=1}^J \langle \Gamma^{-1/2} (G(u^{(j)})-\bar{G}),  \Gamma^{-1/2}(G(u^{(k)})-\bar{G})\rangle_Y 
(v_k-\bar{v}) \cdot (v_j-\bar{v})\\
\label{eq:Cformulate}
&=-2\left\|\sum_{j=1}^J  (v_j-\bar{v})(u^{(j)}-\bar{u})\right\|^2_K 
-2\left\|\Gamma^{-1/2}\sum_{j=1}^J  (v_j-\bar{v})(G(u^{(j)})-\bar{G})\right\|^2_Y.
\end{align}
Note that 
\[
\frac1J \sum_{j=1}^J  (v_j-\bar{v})(u^{(j)}-\bar{u})=\frac1J\sum_{j=1}^J \langle v, u^{(j)}-\bar{u}\rangle_X(u^{(j)}-\bar{u})=
 C(\fraku)v,
\]
so if $v\in\calA$ 
\[
\langle v , \frac{d}{dt} C(\fraku(t))v\rangle_X\leq -2\|C(\fraku)v \|_K^2\leq -2\lambda_m\|C(\fraku(t)) v\|_X^2.
\]
Here we used that for all $v\in \calA$
\[
C(\fraku(t)) v=\frac1J\sum_{j=1}^J \langle v, (u^{(j)}-\bar{u})\rangle_X (u^{(j)}-\bar{u}) \in \calA.
\]
Consider $C(\fraku(t))$ as a matrix in $(\calA, \|\cdot\|_X)$, and let $w(t)$ be the unit-norm eigenvector with maximum eigenvalue, we observe that because 
$$0=\frac d {dt} \|w(t)\|^2_X=2\langle w(t), \frac d {dt}w(t)\rangle_X,$$ 
so 
\begin{align*}
\frac{d}{dt} \|C(\fraku(t)) \|_X=&\frac d{dt}\langle w(t), C(\fraku(t))w(t)\rangle_X\\
&=\langle w , \frac d{dt}C(\fraku) w\rangle_X+2 \langle \frac d{dt} w(t), C(\fraku(t)) w(t)\rangle_X\\
&=\langle w , \frac d{dt}C(\fraku) w\rangle_X+2 \|C(\fraku)\|_K\langle \frac d{dt} w(t) , w(t)\rangle_X \\
&\leq -2\lambda_m \|C(\fraku(t))w(t)\|^2_X=-2\lambda_m\|C(\fraku(t))\|^2_X.
\end{align*}
So $$\frac{d}{dt} \|C(\fraku(t))\|^{-1}_X=-\|C(\fraku(t))\|^{-2}_X\frac{d}{dt} \|C(\fraku(t))\|_X\geq 2\lambda_m,$$ and hence we have our claim. 
\end{proof}

\begin{remark}
The bound in the preceding theorem shows that the TEKI ensemble collapses, even
in the case of nonlinear $G$; previous collapse results for EKI concern only
the linear setting. 
The rate of collapse for each ensemble member
is $\mathcal{O}(\frac{1}{\sqrt{t}})$. In classical Kalman filter theory, upper bounds for the covariance matrix can be obtained through an observability condition. In the TEKI algorithm, the inclusion of a (prior) observation $u$ in $F(u)$ enforces the system to be observable. This provides the intuition for the upper bound we prove for 
the TEKI covariance.
\end{remark}

We conclude this subsection with a lemma and corollary which dig a little
deeper into the properties of the solution ensemble, within the invariant
subspace $\calA.$

\begin{lem}
\label{lem:onedirection}
For any $u^\bot\in X$, if $ \langle u^\bot, u^{(j)}(0)-\bar{u}(0) \rangle_X=0$ for all $j=1,\ldots,J$ then the TEKI flow will not change along the direction of $u^\bot$ while the solution exists:
\[
\langle u^\bot, u^{(j)}(t)\rangle_X =\langle u^\bot, \bar{u}(0)\rangle_X.
\]
In particular $C(\fraku(t))u^\bot\equiv \mathbf{0}$. 
\end{lem}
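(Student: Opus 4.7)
The plan is to derive a closed scalar linear ODE system for the quantities $a_j(t) := \langle u^\bot, u^{(j)}(t)-\bar{u}(t)\rangle_X$ and then use uniqueness to conclude $a_j \equiv 0$. All three assertions of the lemma then follow almost mechanically. The driving intuition is that the TEKI flow moves each ensemble member only in the span of the current deviations $\{u^{(k)}-\bar{u}\}_k$, so a direction $u^\bot$ that is initially $X$-orthogonal to every deviation should remain so, and the component of each $u^{(j)}$ along $u^\bot$ should be conserved.

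First, I would average the flow \eqref{eq:has3} over $j$ to obtain an evolution equation for $\bar{u}$, and subtract this from the equation for $u^{(j)}$. Introducing the centred coefficients $\tilde{E}_{jk}(\fraku) := E_{jk}(\fraku) - \frac{1}{J}\sum_{j'=1}^J E_{j'k}(\fraku)$, this yields
$$\frac{d}{dt}\bigl(u^{(j)}-\bar{u}\bigr) = -\frac{1}{J}\sum_{k=1}^J \tilde{E}_{jk}(\fraku)\bigl(u^{(k)}-\bar{u}\bigr).$$
Pairing with $u^\bot$ in the $X$-inner product produces the closed linear system $\dot{a}_j(t) = -\frac{1}{J}\sum_{k=1}^J \tilde{E}_{jk}(\fraku(t))\, a_k(t)$. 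The hypothesis gives $a_j(0) = 0$ for all $j$, and the coefficients $\tilde{E}_{jk}(\fraku(t))$ are continuous in $t$ on the interval of existence by Lemma \ref{lem:checkderivate}, so standard uniqueness for linear ODEs forces $a_j(t) \equiv 0$ throughout.

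Equipped with $a_k \equiv 0$, I would then project the original flow \eqref{eq:has3} onto $u^\bot$ directly, obtaining
$$\frac{d}{dt}\langle u^\bot, u^{(j)}(t)\rangle_X = -\frac{1}{J}\sum_{k=1}^J E_{jk}(\fraku(t))\, a_k(t) = 0.$$
Hence $\langle u^\bot, u^{(j)}(t)\rangle_X$ is constant in $t$, and evaluating at $t=0$ together with the hypothesis identifies this constant with $\langle u^\bot, \bar{u}(0)\rangle_X$. The final assertion $C(\fraku(t))u^\bot \equiv \mathbf{0}$ is then immediate, since expanding the definition of $C$ gives $C(\fraku(t))u^\bot = \frac{1}{J}\sum_j a_j(t)\bigl(u^{(j)}(t)-\bar{u}(t)\bigr) = \mathbf{0}$.

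I do not foresee a genuine analytic obstacle: once the subtracted system for $a_j$ is set up, everything reduces to the uniqueness theorem for finite-dimensional linear ODEs with continuous coefficients. The only bookkeeping care needed is to verify that the flow for the deviations $u^{(j)}-\bar{u}$ is itself closed in the deviations, which is precisely the structural feature of TEKI that makes the lemma hold and distinguishes it from a mere observation about the initial time.
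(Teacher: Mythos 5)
Your proposal is correct, and the supporting steps (projecting the flow onto $u^\bot$ once the deviations are known to be orthogonal, and expanding $C(\fraku(t))u^\bot$ as $\frac1J\sum_j a_j(t)(u^{(j)}(t)-\bar u(t))$) coincide with the paper's. Where you genuinely diverge is in how you establish $a_j(t):=\langle u^\bot, u^{(j)}(t)-\bar u(t)\rangle_X\equiv 0$. The paper does not set up a linear ODE for the $a_j$; instead it reuses identity \eqref{eq:Cformulate} from the proof of Theorem \ref{thm:Ctabove}, which exhibits $\frac{d}{dt}\langle v, C(\fraku(t))v\rangle_X$ as minus a sum of two squared norms for \emph{every} fixed $v\in X$. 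Taking $v=u^\bot$, the nonnegative quantity $\langle u^\bot, C(\fraku(t))u^\bot\rangle_X=\frac1J\sum_j a_j(t)^2$ is monotone non-increasing and starts at zero, hence vanishes identically. Your route — centering the flow to get $\dot a_j=-\frac1J\sum_k \tilde E_{jk}(\fraku(t))a_k$ with continuous coefficients and invoking uniqueness for linear nonautonomous systems from zero initial data — is equally valid and arguably more elementary: it uses only the structural fact that each $\dot u^{(j)}$ lies in the span of the current deviations, and would apply to any flow of that form regardless of the sign structure of the coefficients. The paper's dissipation argument buys economy (the key inequality is already proved) and fits the Lyapunov-style theme of the section, while yours isolates the invariant-subspace mechanism more transparently. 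Both are complete; just make sure, as you do, to justify continuity of $t\mapsto \tilde E_{jk}(\fraku(t))$ on the existence interval, which follows from Lemma \ref{lem:checkderivate} together with $\fraku\in C^1([0,T);\calA^J)$.
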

\begin{proof}
First of all, recall that \eqref{eq:Cformulate} holds for all $v\in X$. We let $v=u^\bot$, which leads to 
\[
0\leq \langle u^\bot, C(\fraku(t)) u^\bot\rangle_X \leq \langle u^\bot, C(\fraku(0)) u^\bot\rangle_X=\frac1J \sum_{j=1}^J \langle u^\bot, u^{(j)}(0)-\bar{u}(0)\rangle^2_X=0.
\]
Then from
\[
\langle u^\bot, C(\fraku(t)) u^\bot\rangle_X =\frac1J \sum_{j=1}^J \langle u^\bot, u^{(j)}(t)-\bar{u}(t)\rangle^2_X
\]
we find that $\langle u^\bot, u^{(j)}(t)-\bar{u}(t)\rangle_X=0$.  Next we note that 
\[
\frac{d}{dt} \langle u^\bot, u^{(j)}(t)\rangle_X=-\frac{1}{J}\sum\limits_{k=1}^J \Bigl(D_{jk}(\mathfrak{u})
+\langle  C_0^{-1/2}u^{(j)},C_0^{-1/2}( u^{(k)}-\bar{u}) \rangle_X\Bigr)\langle u^\bot, u^{(k)}-\bar{u}\rangle_X=0.
\]
So  $\langle u^\bot, u^{(j)}(t)\rangle_X=\langle u^\bot, u^{(j)}(0)\rangle_X=\langle u^\bot, \bar{u}(0)\rangle_X$. Lastly, for any fixed $v$
\[
\langle v,C(\fraku(t)) u^\bot\rangle_X=\frac1J\sum_{j=1}^J \langle v, u^{(j)}(t)-\bar{u}(t)\rangle_X\langle u^\bot, u^{(j)}(t)-\bar{u}(t)\rangle_X=0.
\]
So $C(\fraku(t)) u^\bot\equiv \mathbf{0}$.
\end{proof}
Lemma \ref{lem:onedirection} suggests that we define the following 
subspace $\calB\subseteq\calA$:
\[
\calB:=\text{span}\{u^{(j)}(0)-\bar{u}(0),j=1,\cdots,J\}.
\]
Let $P_\calB$ be the projection to $\calB$ with respect to $\|\,\cdot\,\|_X$, and 
\begin{equation}
\label{eq:ubot}
u^\bot_0:=\bar{u}(0)-P_\calB \bar{u}(0).
\end{equation}
For notational simplicity, we write $v\bot \calB$ if $\langle v, u\rangle_X=0$ for all $u\in\calB$. Then $u^\bot_0 \bot \calB$, and $u^{(j)}(0)-u^\bot_0\in \calB$ for all $j$.  By Lemma \ref{lem:onedirection}, we know for any $v\bot \calB$
\[
\langle v, u^{(j)}(t)\rangle_X=\langle v, u^{(j)}(0)\rangle_X=\langle v, u^\bot_0\rangle_X\Leftrightarrow \langle v, u^{(j)}(t)-u^\bot_0\rangle_X=0.
\]
%
In other words, we  further improve results in Theorem \ref{t:eu} to 
\begin{cor}
\label{cor:affine}
The TEKI flow stays in the affine space $u^\bot_0+\calB$, that is 
\[
u^{(j)}(t)-u^{\bot}_0\in \calB \quad \text{while the solution exists.}
\]
\end{cor}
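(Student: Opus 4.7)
The plan is to combine Theorem \ref{t:eu}, which tells us $u^{(j)}(t)\in\calA$ for as long as the solution exists, with Lemma \ref{lem:onedirection}, which controls the behavior of the flow in directions orthogonal to $\calB$. The corollary is essentially a decomposition statement: within the finite-dimensional space $\calA$, the trajectory can split into a part that lies in $\calB$ (which is allowed to evolve) and a constant offset $u_0^\bot\in\calA\cap\calB^\perp$ that comes from the initial mean.

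First, I would note that by definition $u_0^\bot=\bar u(0)-P_\calB\bar u(0)$ satisfies $u_0^\bot\bot\calB$, and also $u_0^\bot\in\calA$ because $\bar u(0)\in\calA$ and $P_\calB\bar u(0)\in\calB\subseteq\calA$. Then for every $v\in\calA$ with $v\bot\calB$, Lemma \ref{lem:onedirection} (applied to $u^\bot=v$) gives
\[
\langle v,u^{(j)}(t)\rangle_X=\langle v,\bar u(0)\rangle_X=\langle v, P_\calB\bar u(0)+u_0^\bot\rangle_X=\langle v,u_0^\bot\rangle_X,
\]
where the last equality uses $v\bot\calB$. Rearranging,
\[
\langle v,u^{(j)}(t)-u_0^\bot\rangle_X=0\qquad\text{for all }v\in\calA\cap\calB^\perp.
\]

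Second, I would use the orthogonal decomposition of the finite-dimensional space $\calA$ as $\calA=\calB\oplus(\calA\cap\calB^\perp)$ (orthogonal with respect to $\langle\cdot,\cdot\rangle_X$). Since Theorem \ref{t:eu} gives $u^{(j)}(t)\in\calA$, and $u_0^\bot\in\calA$, their difference lies in $\calA$ and can be written as $u^{(j)}(t)-u_0^\bot=b^{(j)}(t)+c^{(j)}(t)$ with $b^{(j)}(t)\in\calB$ and $c^{(j)}(t)\in\calA\cap\calB^\perp$. Taking $v=c^{(j)}(t)$ in the inner product identity above, $\calB\bot c^{(j)}(t)$ forces $\langle c^{(j)}(t),b^{(j)}(t)\rangle_X=0$, so $0=\|c^{(j)}(t)\|_X^2$. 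Hence $c^{(j)}(t)=0$ and $u^{(j)}(t)-u_0^\bot=b^{(j)}(t)\in\calB$, which is exactly the claim.

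There is no real obstacle here: the whole argument is an easy consequence of Lemma \ref{lem:onedirection} once one observes that $\calA$ decomposes orthogonally along $\calB$ and $\calA\cap\calB^\perp$, and that the Lemma pins down the component along $\calA\cap\calB^\perp$ to be exactly $u_0^\bot$ for every ensemble member. The only point requiring any care is the bookkeeping that the ``test direction'' in Lemma \ref{lem:onedirection} be taken inside $\calA$ (not in all of $X$), which is harmless because membership in $\calB$ can be detected by pairing against vectors in $\calA\cap\calB^\perp$ alone, given that $u^{(j)}(t)-u_0^\bot\in\calA$.
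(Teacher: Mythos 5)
Your proposal is correct and follows essentially the same route as the paper: the paper also deduces the corollary by applying Lemma \ref{lem:onedirection} to directions orthogonal to $\calB$ and using $\langle v,\bar u(0)\rangle_X=\langle v,u_0^\bot\rangle_X$ to conclude $\langle v,u^{(j)}(t)-u_0^\bot\rangle_X=0$. Your extra bookkeeping (restricting test directions to $\calA\cap\calB^\perp$ and invoking the finite-dimensional orthogonal decomposition of $\calA$) is a harmless, slightly more explicit version of the paper's implicit use of $(\calB^\perp)^\perp=\calB$.
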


\subsection{A Priori Bounds On TEKI Flow}
\label{ssec:const}
In many inverse problems prior information is available in terms
of rough upper estimates on $\|u\|^2_K$, where $K$ is an appropriately chosen
Banach space. Classically Tikhonov regularization is used to achieve 
such bounds, and in this subsection we show how similar bounds may be imposed
through the TEKI flow approach. In the study of the EnKF for state
estimation some general conditions that guarantee boundedness of the 
solutions are investigated in \cite{KLS14, TMK16}. However, in general, 
EnKF-based state estimation can have catastrophic growth phenomenon 
\cite{KMT16}. For inverse problems, and TEKI in particular, the situation
is more favourable. We study the linear setting first, and then the nonlinear
case. Recall the definition \eqref{eq:J} of $\calI_{\rm{linear}}$.

\begin{proposition}
\label{lem:linearcase}
If the observation operator $G$ is linear, then 
the TEKI has a solution $u \in C([0,\infty),\cA)$ and, for all $t \ge 0$,
\[
\|u^{(j)}(t)\|^2_K\leq 2 \calI_{\rm{linear}}(u^{(j)}(0);y)
\]
\end{proposition}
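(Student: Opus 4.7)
The plan is to exploit the gradient flow structure already identified in \eqref{eq:has5}, namely
\[
\frac{du^{(j)}}{dt} = -C(\fraku)\nabla_u \calI_{\rm{linear}}(u^{(j)};y),
\]
and use $\calI_{\rm{linear}}(u^{(j)}(\cdot);y)$ as a Lyapunov function along each ensemble member's trajectory. The key point is that although the ensemble members are coupled through the empirical covariance $C(\fraku)$, that coupling appears only as a positive semi-definite preconditioner, so each $\calI_{\rm{linear}}(u^{(j)}(t);y)$ is non-increasing individually.

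Concretely, setting $g^{(j)}(t):=\nabla_u \calI_{\rm{linear}}(u^{(j)}(t);y)$, I would differentiate along the flow to obtain
\[
\frac{d}{dt}\calI_{\rm{linear}}(u^{(j)}(t);y) = \langle g^{(j)}, \dot u^{(j)}\rangle_X = -\langle g^{(j)}, C(\fraku)\,g^{(j)}\rangle_X \le 0,
\]
since $C(\fraku)$ is manifestly positive semi-definite as an empirical covariance. Integrating this differential inequality from $0$ to $t$ and discarding the (non-negative) least-squares term in $\calI_{\rm{linear}}$ yields
\[
\tfrac{1}{2}\|u^{(j)}(t)\|_K^2 = R(u^{(j)}(t)) \le \calI_{\rm{linear}}(u^{(j)}(t);y) \le \calI_{\rm{linear}}(u^{(j)}(0);y),
\]
which is precisely the stated bound.

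For the global existence assertion, I would combine the local result of Theorem \ref{t:eu} with a continuation argument. Theorem \ref{t:eu} places the solution in $\calA^J$, on which, by \eqref{eq:lambda}, $\|\cdot\|_K$ and $\|\cdot\|_X$ are equivalent; the $K$-bound just derived therefore precludes blow-up of $\|u^{(j)}(t)\|_X$ in finite time. A standard maximal-interval-of-existence argument then extends the $C^1$ solution to all of $[0,\infty)$.

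I do not anticipate a serious obstacle here: the computation is a short Lyapunov argument, and the only conceptual ingredient is the gradient flow representation \eqref{eq:has5}, which is genuinely available only because $G$ is linear (for nonlinear $G$ the flow is not the gradient of $\calI$, which is why the argument must be restricted to the linear setting). The one point requiring mild care is that the gradient in \eqref{eq:has5} is the $X$-gradient, so that the inner product in the Lyapunov computation naturally pairs with $C(\fraku):X\to X$; this is consistent with the identifications used in \eqref{eq:has4}.
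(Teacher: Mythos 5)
Your proposal is correct and follows essentially the same route as the paper: the gradient-flow representation \eqref{eq:has5} makes $\calI_{\rm{linear}}(u^{(j)}(t);y)$ a Lyapunov function because $C(\fraku)$ is positive semi-definite, the regularizer term then gives the $K$-norm bound, and boundedness on the finite-dimensional invariant subspace $\calA$ rules out finite-time blow-up. Your added care about which inner product pairs with $C(\fraku)$, and the explicit continuation argument via norm equivalence on $\calA$, are just slightly more detailed versions of what the paper states tersely.
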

\begin{proof}
Simply note that in the linear case, the TEKI flow can be written as a gradient 
flow in the form \eqref{eq:has5}, so that 
\[
\frac{d}{dt} \calI_{\rm{linear}}(u^{(j)}(t);y)=-\langle \nabla_u 
\calI_{\rm{linear}}(u^{(j)}(t);y), C(\fraku) \nabla_u \calI_{\rm{linear}}(u^{(j)}(t);y)\rangle_K\leq 0.
\]
Therefore $\calI_{\rm{linear}}(u^{(j)}(t);y)\leq \calI_{\rm{linear}}(u^{(j)}(0);y)$. This implies that 
\[
\frac12\|u^{(j)}(t)\|_K^2=R(u^{(j)}(t))\leq \calI_{\rm{linear}}(u^{(j)}(t);y)\leq \calI_{\rm{linear}}(u^{(j)}(0);y).
\]
As the solution is bounded it cannot blow-up and hence exists
for all time.
\end{proof}

It is difficult to show that TEKI flow is bounded for a general, nonlinear,
observation operator. However bounds can be achieved for a modified 
observation operator which incorporates prior upper bounds on $\|u\|_K.$
In particular if we seek solution satisfying $\|u\|_K\leq M$ for some known
constant $M$ then we define 
\[
\widetilde{G}(u)=\phi_M(\|u\|_K) G(u);
\]
here $\phi_M(x)$ is a smooth transition function satisfying $\phi_M(x)=1$ if $x<M$ and $\phi_M(x)=0$ if $x>M+1$. Using $\widetilde{G}(u)$ instead of $G$ 
is natural in situations where we seek solutions satisfying
$\|u\|_K\leq M$. To understand this setting we work in the remainder of this
section under the following assumption:
\begin{assumption}\label{aspt:bdobs}
There is a constant $M$, so that $G(u)=\mathbf{0}$ if $\|u\|_K>M+1$.
\end{assumption}

\begin{prop}
Let Assumption \ref{aspt:bdobs} hold. Then for any fixed $T$ the TEKI flow 
has unique solution $u \in C([0,\infty),\cA)$ satisfying, for every ensemble
member $j$, 
\[
\sup_{t \ge T}\|u^{(j)}(t)\|_K\leq \max\left\{\|u^{(j)}(T)\|_K, M+\sqrt{\frac{2\lambda_M J}{\lambda_mT}}+1\right\}.
\]
The constants $\lambda_m$ and $\lambda_M$ are given by \eqref{eq:lambda}. 
\end{prop}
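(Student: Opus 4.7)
The strategy is to combine the ensemble-collapse bound of Theorem \ref{thm:Ctabove} with a barrier argument on the scalar $\|u^{(j)}(t)\|_K$, exploiting the structural fact that once \emph{every} ensemble member leaves the support of $G$ the TEKI flow reduces to a purely dissipative $K$-norm gradient flow.

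First I would upgrade Theorem \ref{t:eu} to global-in-time existence. Since the flow lies in the finite-dimensional space $\calA^J$, the set $\{u\in\calA : \|u\|_K\le M+1\}$ is compact, and Assumption \ref{aspt:bdobs} together with continuity of $G$ then yields $|D_{jk}(\fraku)|\le C_G$ uniformly on $\calA^J$. Taking the $K$-inner product of \eqref{eq:has3} with $u^{(j)}$ gives
\[
\frac{d}{dt}\tfrac12\|u^{(j)}\|_K^2
=-\frac1J\sum_k D_{jk}(\fraku)\langle u^{(j)},u^{(k)}-\bar u\rangle_K
-\frac1J\sum_k\langle u^{(j)},u^{(k)}-\bar u\rangle_K^2,
\]
and dropping the non-positive second term and applying Cauchy--Schwarz leads to $\frac{d}{dt}V\le C\,V$ for $V(t):=\sum_j\|u^{(j)}(t)\|_K^2$; Gronwall then rules out finite-time blow-up.

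Set $\rho_T:=\sqrt{\lambda_M J/(2\lambda_m T)}$, so the threshold in the claim is $B:=M+1+2\rho_T=M+1+\sqrt{2\lambda_M J/(\lambda_m T)}$. For $t\ge T$, Theorem \ref{thm:Ctabove} gives $\|C(\fraku(t))\|_X\le 1/(2\lambda_m T)$; combined with the pointwise bound $\|u^{(k)}-\bar u\|_X^2\le J\|C(\fraku)\|_X$ (a direct consequence of $C(\fraku)=\tfrac1J\sum_k(u^{(k)}-\bar u)\otimes(u^{(k)}-\bar u)$) and the norm comparison \eqref{eq:lambda} on $\calA$, this yields $\|u^{(k)}(t)-\bar u(t)\|_K\le\rho_T$ uniformly in $k$. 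In particular the $K$-distance between any two ensemble members is at most $2\rho_T$, so whenever $\|u^{(j)}(t)\|_K>B$ the triangle inequality forces $\|u^{(k)}(t)\|_K>M+1$ for \emph{every} $k$. Assumption \ref{aspt:bdobs} then gives $G(u^{(k)}(t))\equiv 0$ and hence $D_{jk}(\fraku(t))\equiv 0$, so \eqref{eq:has3} collapses to $\dot u^{(j)}=-\tfrac1J\sum_k\langle u^{(j)},u^{(k)}-\bar u\rangle_K(u^{(k)}-\bar u)$ and
\[
\frac{d}{dt}\tfrac12\|u^{(j)}\|_K^2=-\frac1J\sum_k\langle u^{(j)},u^{(k)}-\bar u\rangle_K^2\le 0.
\]

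The proof is completed by a first-crossing barrier argument. Write $M_\ast:=\max\{\|u^{(j)}(T)\|_K,B\}$ and suppose for contradiction that $\|u^{(j)}(t_0)\|_K>M_\ast$ for some $t_0>T$; let $\tau:=\sup\{t\in[T,t_0]:\|u^{(j)}(t)\|_K\le M_\ast\}$, so $\tau<t_0$ and $\|u^{(j)}(\tau)\|_K=M_\ast$. On $(\tau,t_0]$ we have $\|u^{(j)}(t)\|_K>M_\ast\ge B$, so the previous display integrates to $\|u^{(j)}(t_0)\|_K\le M_\ast$, contradicting the assumption. The real content, and the only place where Assumption \ref{aspt:bdobs} is used, is the quantitative transfer from the ensemble-spread bound to a $K$-gap of width $2\rho_T$ separating the threshold $B$ from the support radius $M+1$; the global existence and first-crossing steps are routine once this reduction is in place.
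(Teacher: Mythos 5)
Your proposal is correct and follows essentially the same route as the paper: the collapse bound of Theorem \ref{thm:Ctabove} controls the ensemble spread in $\|\cdot\|_K$, which via Assumption \ref{aspt:bdobs} forces $D_{jk}\equiv 0$ above the threshold $M+1+\sqrt{2\lambda_M J/(\lambda_m T)}$, whence $\|u^{(j)}(t)\|_K$ is non-increasing there. The only cosmetic differences are that you establish global existence up front by a Gronwall estimate (the paper instead deduces it from the a priori bound) and apply the triangle inequality directly between ensemble members rather than passing through $\bar{u}$.
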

\begin{proof}
By Theorem \ref{thm:Ctabove} we deduce that, assuming a solution exists
for all time,
$$\sup_{t \ge T}\|C(\fraku(t))\|_X<\frac1{2\lambda_mT}.$$ 
Note that, for any $j$,
\[
\frac1J  \|u^{(j)}-\bar{u}\|^4_X\leq \langle u^{(j)}-\bar{u},C(\fraku) (u^{(j)}-\bar{u})\rangle_X\leq \|C(\fraku)\|_X\|u^{(j)}-\bar{u}\|_X^2.
\]
As a consequence, assuming a solution exists for all time,
then every ensemble member $j$ satisfies
$$
\sup_{t \ge T} \|u^{(j)}(t)-\bar{u}(t)\|_X^2\leq J\sup_{t\ge T}\|C(\fraku(t))\|_X<\frac J{2\lambda_mT}.
$$
Therefore, again assuming a solution exists for all time, 
every ensemble member $j$ satisfies
\begin{equation}
\label{eq:dev}
\sup_{t \ge T}\|u^{(j)}(t)-\bar{u}(t)\|_K^2\leq \lambda_M
\sup_{t \ge T} \|u^{(j)}(t)-\bar{u}(t)\|_X^2\leq \frac {\lambda_MJ}{2\lambda_mT}.
\end{equation}
Now assume that for some ensemble member $k$ and some time $t \ge T$
we have
\begin{equation}
\label{eq:holds}
\|u^{(k)}(t)\|_K>M+2\sqrt{\frac{\lambda_M J}{2\lambda_mT}}+1.
\end{equation}
It follows from \eqref{eq:dev} with $j=k$ that, for all $t \ge T$,
$$\|u^{(k)}(t)\|_K-\|\bar{u}(t)\|_K \le \sqrt{\frac{\lambda_M J}{2\lambda_mT}}$$
and hence that
$$\|\bar{u}(t)\|_K \ge M+\sqrt{\frac{\lambda_M J}{2\lambda_mT}}+1.$$ 
Now from \eqref{eq:dev} with any $j$ we deduce that, for all $t \ge T$,
$$\|\bar{u}(t)\|_K-\|u^{(j)}(t)\|_K \le \sqrt{\frac{\lambda_M J}{2\lambda_mT}}$$
and hence that, for all ensemble members $j$,
$$\|u^{(j)}(t)\|_K \ge M+1.$$
It follows that, if \eqref{eq:holds} holds, then 
\[
D_{k \ell}(\mathfrak{u})=\langle \Gamma^{-1/2} (G(u^{(k)})-y),\Gamma^{-1/2}(G(u^{(\ell)})-\bar{G}) \rangle_Y=0.
\]
Then 
\[
\frac{d}{dt} u^{(k)}(t)=-C(\fraku) C_0^{-1} u^{(k)}\Rightarrow 
\frac{d}{dt} \|u^{(k)}(t)\|_K^2=-2\langle C_0^{-1} u^{(k)}, C(\fraku) C_0^{-1} u^{(k)}\rangle_X\leq 0. 
\] 
It follows that, for $t\geq T$, the function $t \mapsto \|u^{(k)}(t)\|_K$ 
is non-increasing whenever it is larger than $M+\sqrt{\frac{2\lambda_M J}{\lambda_mT}}+1$.  
This demonstrates the desired upper bound on the solution which, in turn,
proves global existence of a solution.
\end{proof}

\subsection{Long-time Analysis For TEKI Flow: The Linear Setting}
\label{ssec:lt}
Theorem \ref{thm:Ctabove} shows that the TEKI ensemble collapses as time evolves. As the collapse is
approached, it is natural to use a linear approximation to understand the TEKI flow.
This motivates the analysis in this subsection where we consider the linear setting  $G(u)=Au$ 
and study the asymptotic behavior of the TEKI flow.  For simplicity we 
make the following assumption, remarking that while generalization of the results below to Hilbert spaces is possible, the
setting is substantially more technical, and does not provide much more scientific
understanding.

\begin{assumption}\label{aspt:FD}
Both $X$ and $Y$ are finite dimensional spaces and matrix $C_0$ is strictly
positive-definite on $X$.
\end{assumption}

From Corollary \ref{cor:affine}, we know the TEKI flow is restricted to
the affine subspace $u^\bot_0+\calB\subset K$. 
Given this constraint, it is natural to expect the limit point of $u^{(j)}(t)$ to be of form $u^\bot_0+u^\dagger_{\calB}$, where
\[
u^\dagger_{\calB}=\argmin_{u\in \calB} \Bigl\{\|C_0^{-1/2}(u+u_0^\bot)\|^2_X+\|\Gamma^{-1/2}(A (u+u^\bot_0)-y)\|^2_Y \Bigr\}.
\]
Then the constrained-Karush-Kuhn-Tucker (KKT) condition yields that 
\[
(C_0^{-1}+A^*\Gamma^{-1}A )(u_{\calB}^\dagger+u_0^\bot)-A^*\Gamma^{-1} y=: v^\dagger\bot\calB. 
\]
Here $A^*$ is the adjoint of $A: (X,\|\,\cdot\,\|_X)\mapsto (Y,\|\,\cdot\,\|_Y)$. 

{Note that
$\Omega:=C_0^{-1}+A^*\Gamma^{-1}A$ is the posterior precision matrix of 
the Bayesian inverse problem associated to inverting $A$ subject to additive
Gaussian noise $N(0,\Gamma)$ and prior $N(0,C_0)$ on $u$.  Since we often consider elements in the subspace $\calB$, we also denote the restriction of $\Omega$ in $\calB$ as $\Omega_B$. Note that
$0<\langle u, \Omega_\calB u\rangle_X<\infty$  for all nontrivial $u\in \calB$, $\Omega_\calB$ is positive definite on $\calB$, while $\Omega_{\calB}^{-1}$ and $\Omega_\calB^{1/2}$ are both well defined. }

\begin{thm}
\label{t:lt}
Let Assumption \ref{aspt:FD} hold and assume further that $G(u)=Au$. 
Then the TEKI flow exists for all $t>0$ and the solution converges to 
$u^\bot_0+u^\dagger_{\calB}$  with rate of $O(\frac1{\sqrt{t}})$. In particular,  $e^{(j)}(t)=u^{(j)}(t)-u_0^\bot-u^\dagger_{\calB}$ is bounded by 
\[
\|e^{(j)}(t)\|^2_Z\leq \frac{m_0}{1+2m_0 t}\|e^{(j)}(0)\|^2_Z.
\]
Here $\|\,\cdot\,\|_Z$ is the norm  equivalent to $\|\,\cdot\,\|_X$ on $\calB$ given
by 
\[
\|u\|^2_Z=\|u\|^2_K+ \|\Gamma^{-1/2}Au\|^2_Y=\langle  \Omega_\calB u, u\rangle_X. 
\]
Furthermore, the constant $m_0$ is given by 
\[m_0:=\min_{u\in \calB, \|u\|_X=1}\langle \Omega_\calB^{1/2} u,C(\fraku(0)) \Omega_\calB^{1/2}u\rangle_X. 
\]
\end{thm}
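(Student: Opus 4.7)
The plan is to exploit the gradient-flow structure \eqref{eq:has5} together with the KKT characterization of $u^\dagger_\calB$ to reduce the dynamics to a closed linear ODE on the finite-dimensional subspace $\calB$, and then to derive a Riccati-type identity for the empirical covariance in the whitened coordinates defined by $\Omega_\calB^{1/2}$. Global existence (and a uniform $K$-norm bound) is already provided by Proposition \ref{lem:linearcase}, so only the quantitative decay of $e^{(j)}(t) = u^{(j)}(t) - u_0^\bot - u^\dagger_\calB$ needs to be established.

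First, by Corollary \ref{cor:affine} the error $e^{(j)}(t)$ lies in $\calB$ for every $t$. Since $\nabla \calI_{\rm{linear}}(u;y) = \Omega u - A^*\Gamma^{-1}y$, the KKT identity stated just before the theorem rewrites the gradient as
\[
\nabla \calI_{\rm{linear}}(u^{(j)};y) \;=\; \Omega\, e^{(j)} + v^\dagger, \qquad v^\dagger \bot \calB.
\]
Because $C(\fraku(t))$ annihilates $\calB^\bot$ (Lemma \ref{lem:onedirection}), and because the restriction $\Omega_\calB$ is defined so that $(\Omega - \Omega_\calB)e^{(j)} \in \calB^\bot$ whenever $e^{(j)} \in \calB$, the flow \eqref{eq:has5} collapses to the closed linear ODE
\[
\dot e^{(j)}(t) \;=\; -C(\fraku(t))\, \Omega_\calB\, e^{(j)}(t) \qquad \text{on } \calB.
\]

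The key step is then the evolution of $C(\fraku(t))$ itself. Averaging the ODE above produces a matching equation for $\bar e(t)$, so the centered differences $u^{(j)} - \bar u = e^{(j)} - \bar e$ satisfy the same linear ODE. Differentiating $C(\fraku) = \tfrac{1}{J}\sum_j (u^{(j)} - \bar u)\otimes(u^{(j)} - \bar u)$ and symmetrizing the resulting $\dot d_j\otimes d_j + d_j\otimes \dot d_j$ yields the clean Riccati identity
\[
\dot C(\fraku) \;=\; -2\, C(\fraku)\, \Omega_\calB\, C(\fraku).
\]
Setting $M(t) := \Omega_\calB^{1/2}\, C(\fraku(t))\, \Omega_\calB^{1/2}$ as a symmetric operator on $(\calB, \|\cdot\|_X)$, both $\Omega_\calB$ (positive definite on $\calB$ by assumption) and $C(\fraku(0))$ (because the centered initial ensemble spans $\calB$) are strictly positive, so $M(0)$ is invertible and the Riccati identity becomes $\dot M = -2\,M^2$. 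This integrates explicitly to
\[
M(t)^{-1} \;=\; M(0)^{-1} + 2t\, I_\calB,
\]
and since $m_0 = \lambda_{\min}(M(0))$ by definition, one reads off $\lambda_{\min}(M(t)) = m_0/(1 + 2 m_0 t)$.

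To finish, I would whiten the error via $q^{(j)} := \Omega_\calB^{1/2}\,e^{(j)}$, which satisfies $\|q^{(j)}\|_X^2 = \|e^{(j)}\|_Z^2$ and $\dot q^{(j)} = -M(t)\,q^{(j)}$. Then
\[
\frac{d}{dt}\|q^{(j)}\|_X^2 \;\le\; -\frac{2 m_0}{1 + 2 m_0 t}\,\|q^{(j)}\|_X^2,
\]
and Gr\"onwall's inequality delivers the asserted bound, which in turn gives the $O(1/\sqrt{t})$ rate. The delicate part of the argument is the algebraic derivation of the Riccati identity $\dot C(\fraku) = -2\,C(\fraku)\Omega_\calB C(\fraku)$ and its clean transformation into $\dot M = -2M^2$: everything downstream is an immediate consequence of the closed-form solution $M(t)^{-1} = M(0)^{-1} + 2tI_\calB$. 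The bookkeeping between $\calB$ and $\calB^\bot$ needs to be carried through carefully, but it is reduced to a routine matter by Lemma \ref{lem:onedirection} and Corollary \ref{cor:affine}.
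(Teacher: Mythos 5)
Your proposal is correct and follows essentially the same route as the paper's proof: reduce to the error ODE $\dot e^{(j)} = -C(\fraku)\Omega_\calB e^{(j)}$ via the KKT identity and Lemma \ref{lem:onedirection}, derive the Riccati equation for the whitened covariance $\Omega_\calB^{1/2}C(\fraku)\Omega_\calB^{1/2}$ (the paper's $D(\fraku)$, your $M$), and conclude by Gr\"onwall. The only cosmetic difference is that you integrate the matrix Riccati equation in closed form as $M(t)^{-1}=M(0)^{-1}+2tI_\calB$ (using invertibility of $M(0)$, which does hold since the centered initial ensemble spans $\calB$), whereas the paper tracks the individual eigenvalues of $D(\fraku(t))$; both yield the same lower bound $1/(m_0^{-1}+2t)$ and the same final estimate.
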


Before proving the theorem we discuss how
the constraint $u^\dagger_\calB\in\calB$ changes 
the solution in relation to the unconstrained optimization. 
For that purpose, consider the unconstrained problem  
\[
u^\dagger=\argmin_{u\in K} \Bigl\{\|u+u_0^\bot\|^2_K+\|\Gamma^{-1/2}(A (u+u_0^\bot)-y)\|^2_Y\Bigr\}.
\]
(This corresponds to finding the maximum a posteriori estimator for the Bayesian
inverse problem refered to above.)  The KKT condition indicates that 
\[
\Omega (u^\dagger+u_0^\bot)=A^*\Gamma^{-1} y.
\]
{Note that $u^\dagger$ is in the space $K$, whilst
$u^\dagger_\calB$ is in the  subspace $\calB.$}
It is natural to try and understand the relationship between $u^\dagger$ 
and $u^\dagger_{\calB}$ since this sheds light on the optimal choice of
$\calB$ and hence of the initial ensembles. To this end we have:

\begin{prop}
{Under the same conditions as Theorem \ref{t:lt}}, let $P_{\calB}$ be the projection  from $K$ to $\calB$ with respect to $\|\,\cdot\,\|_X$, and $P_{\bot}=\bI-P_{\calB}$. Then $u^\dagger_{\calB}$ can be written as 
\label{prop:MAP}
\[
 u^{\dagger}_{\calB}=P_{\calB}u^\dagger+\Omega_\calB^{-1}P_{\calB} \Omega P_{\bot}u^\dagger.
\]
In particular, if $\calB$ and its orthogonal complement have no correlation through $\Omega$, that is $\langle u, \Omega v\rangle_X=0$ for all $u\in \calB$ and $v\bot \calB$, then $u^{\dagger}_{\calB}=P_{\calB}u^\dagger$. 
\end{prop}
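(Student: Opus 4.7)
The plan is to equate the two KKT conditions, isolate $A^*\Gamma^{-1}y$, and then read off the formula by splitting $u^\dagger$ into its components along $\calB$ and $\calB^\perp$. Concretely, from the unconstrained problem we have the identity $A^*\Gamma^{-1}y=\Omega(u^\dagger+u_0^\perp)$. Substituting this into the constrained KKT relation stated just before the proposition, namely $(u^\dagger_\calB+u_0^\perp)\Omega-A^*\Gamma^{-1}y=v^\dagger\perp\calB$, the inhomogeneous term $A^*\Gamma^{-1}y$ and the $u_0^\perp$ contributions cancel, yielding
\[
\Omega u^\dagger_\calB - \Omega u^\dagger \;\perp\; \calB,
\qquad\text{i.e.}\qquad P_\calB\Omega u^\dagger_\calB = P_\calB\Omega u^\dagger.
\]

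Next I would split $u^\dagger = P_\calB u^\dagger + P_\perp u^\dagger$ and invoke the fact that $\Omega_\calB$, viewed as a linear operator on $\calB$, is exactly $P_\calB\Omega\big|_\calB$ (this is the operator-level meaning of the restriction defined in the paragraph before the theorem, consistent with the quadratic form $\langle u,\Omega_\calB u\rangle_X=\langle u,\Omega u\rangle_X$ for $u\in\calB$). Since $u^\dagger_\calB \in \calB$ and $P_\calB u^\dagger \in \calB$, the displayed equation becomes
\[
\Omega_\calB u^\dagger_\calB \;=\; \Omega_\calB P_\calB u^\dagger + P_\calB \Omega P_\perp u^\dagger.
\]
Applying $\Omega_\calB^{-1}$, which is well defined by the positive-definiteness of $\Omega_\calB$ on $\calB$ noted in the excerpt, gives the claimed formula
\[
u^\dagger_\calB \;=\; P_\calB u^\dagger + \Omega_\calB^{-1} P_\calB \Omega P_\perp u^\dagger.
\]

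For the second assertion, if $\langle u,\Omega v\rangle_X=0$ for every $u\in\calB$ and $v\perp\calB$, then for any $w\in K$ the vector $\Omega P_\perp w$ is $X$-orthogonal to $\calB$, so $P_\calB\Omega P_\perp w=0$. Hence the correction term vanishes and $u^\dagger_\calB=P_\calB u^\dagger$.

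The only mildly subtle point is the identification $P_\calB\Omega\big|_\calB=\Omega_\calB$ as operators on $\calB$; once this is in hand the argument is purely linear algebra. I do not anticipate any genuine obstacle, as invertibility of $\Omega_\calB$ on $\calB$ and the orthogonal decomposition $\bI=P_\calB+P_\perp$ are already in place from the setup preceding the proposition.
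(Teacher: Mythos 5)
Your proof is correct and follows essentially the same route as the paper: subtract the two KKT identities so that $\Omega u^\dagger_\calB-\Omega u^\dagger\perp\calB$, project onto $\calB$, identify $P_\calB\Omega P_\calB$ with $\Omega_\calB$ on $\calB$ via the bilinear form, and invert $\Omega_\calB$. The only blemish is the typo writing the operator on the right in the constrained KKT relation, which does not affect the argument.
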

\begin{proof}
Recall the KKT conditions, 
\[
\Omega(u^\dagger+u_0^\bot)=A^*\Gamma^{-1}y,\quad \Omega(u_{\calB}^\dagger+u_0^\bot)=A^*\Gamma^{-1}y+v^\dagger,
\]
where $v^\dagger \bot \calB$. They lead to
\[
\Omega u_{\calB}^\dagger=\Omega  u^\dagger+v^\dagger=\Omega P_{\calB}u^\dagger+\Omega P_{\bot}u^\dagger+v^\dagger.
\]
Projecting this equation into $\calB$, we find
\[
P_{\calB}\Omega P_{\calB}  u_{\calB}^\dagger=P_\calB \Omega P_\calB u^\dagger+P_\calB \Omega P_\bot u^\dagger. 
\]
{Note that for any $v_1,v_2\in \calB$, $\langle v_1,\Omega_\calB v_2\rangle_X=\langle v_1, P_\calB \Omega P_\calB v_2\rangle_X$, so $\Omega_\calB v_2= P_\calB \Omega P_\calB v_2$. Therefore we have
\[
\Omega_\calB (u_{\calB}^\dagger-P_\calB u^\dagger)=P_\calB \Omega P_\bot u^\dagger.
\]
Finally note that $\Omega_{\calB} $ is positive definite and hence invertible within $\calB$. Applying $\Omega_\calB^{-1}$ on both sides, we have our claim.}
\end{proof}

\begin{proof}[Proof of Theorem \ref{t:lt}]

We investigate the dynamics of  $e^{(j)}(t)=u^{(j)}(t)-u^\bot_0-u^\dagger_{\calB}\in \calB$. Note that
\begin{align*}
\frac{d}{dt}  e^{(j)}(t)&=-\frac{1}{J}\sum\limits_{k=1}^J \Bigl(\langle \Gamma^{-1}( Au^{(j)}-y), A(u^{(k)}-\bar{u})\rangle_Y
+\langle u^{(j)}, u^{(k)}-\bar{u} \rangle_K\Bigr)(u^{(k)}-\bar{u})\\
&=-\frac{1}{J}\sum\limits_{k=1}^J \Bigl(\langle  A^*\Gamma^{-1}( Au^{(j)}-y), u^{(k)}-\bar{u}\rangle_X
+\langle u^{(j)}, C_0^{-1}(u^{(k)}-\bar{u}) \rangle_X\Bigr)(u^{(k)}-\bar{u})\\
&= - C(\fraku) A^*\Gamma^{-1} (Au^{(j)}-y)- C(\fraku) C_0^{-1}u^{(j)}\\
&= - C(\fraku) (A^*\Gamma^{-1} A+C_0^{-1})u^{(j)}+ C(\fraku) A^*\Gamma^{-1} y\\
&=- C(\fraku) (A^*\Gamma^{-1} A+C_0^{-1})u^{(j)}+ C(\fraku)  (C_0^{-1}+A^*\Gamma^{-1}A )\Bigl(u_{\calB}^\dagger+u_0^\bot-v^\dagger\Bigr)\\
&=- C(\fraku) (A^*\Gamma^{-1} A+C_0^{-1})e^{(j)}(t)- C(\fraku)v^\dagger. 
\end{align*}
But Lemma \ref{lem:onedirection} shows that $C(\fraku) v^\dagger=\mathbf{0}$
so  we have established that
\[
\frac{d}{dt}  e^{(j)}(t)=- C(\fraku(t)) \Omega e^{(j)}(t).
\]
{
Since we know $e^{(j)}(t)\in \calB$, $C(\fraku(t))w=\mathbf{0}$ for any $w\bot \calB$, the equation above can be written as 
\[
\frac{d}{dt}  e^{(j)}(t)=- C(\fraku(t)) \Omega_\calB e^{(j)}(t).
\]}
This leads to 
\begin{align*}
\frac12 \frac{d}{dt} \|e^{(j)}(t)\|_Z^2&=-\langle  \Omega_\calB e^{(j)}  ,C(\fraku) \Omega_\calB e^{(j)}\rangle_X\\
&=-\langle  \Omega_\calB^{\frac{1}{2}} e^{(j)}  ,D(\fraku) \Omega_\calB^{\frac{1}{2}}e^{(j)}\rangle_X,
\end{align*}
where $D(\fraku)=\Omega_\calB^{\frac12} C(\fraku) \Omega_\calB^{\frac12}$ on $\calB$. Lemma \ref{lem:Dt} in below shows that for any $v\in \calB$ with $\|v\|_X=1$, and $m_0$ as defined above, 
\[
\langle v, D(\fraku) v\rangle_X\geq \frac{1}{m^{-1}_0+2t}.
\]
Therefore
\[
\frac{d}{dt} \|e^{(j)}(t)\|_Z^2\leq -\frac{2}{m^{-1}_0+2t}  \|\Omega_\calB^{1/2} e^{(j)}(t)\|_K^2
= -\frac{2}{m_0^{-1}+2t}\|e^{(j)}(t)\|_Z^2. 
\]
This leads to 
\[
\frac{d}{dt} \log \|e^{(j)}(t)\|_Z^2\leq -\frac{2}{m^{-1}_0+2t}\Rightarrow \|e^{(j)}(t)\|_Z^2\leq \frac{1}{1+2m_0t}\|e^{(j)}(0)\|_Z^2. 
\]
\end{proof}

\begin{lem}
\label{lem:Dt}
Let he same conditions as in Theorem \ref{t:lt} hold and define
$D(\fraku)=\Omega_\calB^{1/2} C(\fraku) \Omega_\calB^{1/2} $. Then given any $v\in \calB, \|v\|_Z=1$,  
\[
\langle  \Omega_\calB^{1/2} v, D(\fraku(t)) \Omega_\calB^{1/2}v\rangle_X\geq \frac{1}{m_0^{-1}+2t}.
\]
\end{lem}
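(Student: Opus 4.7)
My plan is to derive a matrix Riccati equation for $D(\fraku(t))$ in the linear setting, invert it to obtain an affine-in-$t$ expression for $D(\fraku(t))^{-1}$, and then read off the claimed quadratic-form bound via Cauchy--Schwarz.

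The starting point is the evolution of $C(\fraku(t))$ already computed in the proof of Theorem~\ref{thm:Ctabove}. I would specialize it to $G(u)=Au$: rewriting $\langle\cdot,\cdot\rangle_K=\langle\cdot,C_0^{-1}\cdot\rangle_X$ and $\langle A\cdot,\Gamma^{-1}A\cdot\rangle_X=\langle\cdot,A^*\Gamma^{-1}A\cdot\rangle_X$, the two quadratic sums appearing there combine into a single sum involving $\Omega=C_0^{-1}+A^*\Gamma^{-1}A$; the identity $\frac{1}{J}\sum_j\langle u^{(j)}-\bar u,a\rangle_X\langle u^{(j)}-\bar u,b\rangle_X=\langle a,C(\fraku)b\rangle_X$ then contracts the resulting double sum to yield the operator ODE
\[
\frac{d}{dt}C(\fraku(t))=-2\,C(\fraku(t))\,\Omega\,C(\fraku(t)).
\]

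Next I would restrict to the invariant subspace $\calB$. Lemma~\ref{lem:onedirection} tells us that $C(\fraku(t))$ has range in $\calB$ and annihilates its $\|\cdot\|_X$-orthogonal complement, hence $C(\fraku)\Omega C(\fraku)=C(\fraku)(P_\calB\Omega P_\calB)C(\fraku)=C(\fraku)\,\Omega_\calB\,C(\fraku)$ as operators on $\calB$. Conjugating by $\Omega_\calB^{1/2}$ then produces the clean Riccati equation $\frac{d}{dt}D(\fraku(t))=-2\,D(\fraku(t))^2$ on $\calB$. Inverting (the case $m_0=0$ makes the claimed bound trivial, so I may assume $m_0>0$ and hence $D(\fraku(0))$ invertible on $\calB$) gives $\frac{d}{dt}D(\fraku(t))^{-1}=2\,I_\calB$, so
\[
D(\fraku(t))^{-1}=D(\fraku(0))^{-1}+2t\,I_\calB.
\]

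Finally, since $m_0$ is by definition the smallest eigenvalue of $D(\fraku(0))$ on $(\calB,\|\cdot\|_X)$, the largest eigenvalue of $D(\fraku(0))^{-1}$ is $m_0^{-1}$, and $\langle w,D(\fraku(t))^{-1}w\rangle_X\le m_0^{-1}+2t$ for every $w\in\calB$ with $\|w\|_X=1$. Combining this with the scalar Cauchy--Schwarz inequality
\[
1=\langle D(\fraku(t))^{1/2}w,\,D(\fraku(t))^{-1/2}w\rangle_X^2\le\langle w,D(\fraku(t))w\rangle_X\,\langle w,D(\fraku(t))^{-1}w\rangle_X
\]
yields $\langle w,D(\fraku(t))w\rangle_X\ge 1/(m_0^{-1}+2t)$; applied to $w=\Omega_\calB^{1/2}v$, where $v\in\calB$ has $\|v\|_Z=1$ so that $\|w\|_X^2=\langle v,\Omega_\calB v\rangle_X=1$, this is exactly the lemma. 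The main technical subtlety will be the bookkeeping of the restriction of $\Omega$, defined on all of $X$, to its compression $\Omega_\calB$ on $\calB$; this is what makes the Riccati equation autonomous and identifies $m_0$ with the minimum eigenvalue of the restricted operator $D(\fraku(0))|_\calB$.
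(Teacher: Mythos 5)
Your proposal is correct and follows essentially the same route as the paper: both derive the Riccati equation $\frac{d}{dt}D(\fraku(t))=-2D(\fraku(t))^2$ on $\calB$ from \eqref{eq:Cformulate} together with Lemma \ref{lem:onedirection}, and then extract the bound from its explicit solution. The only (cosmetic) difference is in the last step — the paper notes that eigenvectors of $D(\fraku(0))$ are preserved and integrates the scalar ODE $\dot\lambda=-2\lambda^2$ for the minimum eigenvalue, whereas you invert the Riccati equation to get $D(\fraku(t))^{-1}=D(\fraku(0))^{-1}+2tI_\calB$ and pass back via Cauchy--Schwarz; both yield the same conclusion.
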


\begin{proof}
Recall \eqref{eq:Cformulate} and set $G(u)=Au$ to obtain 
\begin{align*}
\langle v ,\frac{d}{dt} C(\fraku(t)) v\rangle_X&= -\frac{2}{J^2}\left\|\sum_{j=1}^J  (v_j-\bar{v})(u^{(j)}-\bar{u})\right\|^2_K 
-\frac{2}{J^2}\left\|\Gamma^{-1/2}A\sum_{j=1}^J  (v_j-\bar{v})(u^{(j)}-\bar{u})\right\|^2_Y\\
&=-2\|C_0^{-1/2}C(\fraku(t))v\|_K^2-2\|\Gamma^{-1/2}AC(\fraku(t))v\|_Y^2\\
&=-2\langle C(\fraku(t))v, (C_0^{-1}+A^*\Gamma^{-1}A)C(\fraku(t))v\rangle_X
\end{align*}
Since this is true for any $v$ we deduce that $C(\fraku(t))$ as a matrix on $\calB$ satisfies
\[
\frac{d}{dt}C(\fraku(t))=-2C(\fraku(t))(C_0^{-1}+ A^* \Gamma^{-1} A) C(\fraku(t)). 
\]
{Recall that by Lemma \ref{lem:onedirection}, $C(\fraku) v=\mathbf{0}$ for all $v\bot \calB$. As a consequence $C(\fraku)=P_\calB C(\fraku)P_\calB$, and therefore we can write
\[
\frac{d}{dt}C(\fraku(t))=-2C(\fraku(t))\Omega_\calB C(\fraku(t)). 
\] }
So by the chain rule,
\[
\frac{d}{dt} D(\fraku(t))= -2D(\fraku(t))^2. 
\]
As a consequence we find that each eigenvector $v$ of $D(\fraku(0))$ remains an 
eigenvector of $D(\fraku(t))$, and its eigenvalue $\lambda=\lambda(t)$ 
solves the ODE
\[
\frac{d}{dt}\lambda(t)=\frac{d}{dt} \langle v,D(\fraku(t))v \rangle_X= -\langle v,D(\fraku(t))^2v\rangle_X=-2\lambda^2. 
\] 
The solution is given by $\lambda(t)=\frac{1}{\lambda(0)^{-1}+2t}$. Letting $\lambda(0)$ to be minimum eigenvalue gives our claim. 
\end{proof}

\section{Numerical Experiments}
\label{sec:NUM}

In this section we describe numerical results comparing EKI with the 
regularized TEKI method. Our EKI and TEKI algorithms 
are based on time-discretizations
of the continuum limit, rather than on the discrete algorithms stated in 
Sections \ref{sec:intro} and \ref{sec:TEKI}; we describe the adaptive
time-steppers used in subsection
\ref{ssec:TD}. In subsection \ref{ssec:SD} we present the 
spectral discretization used to create prior samples, and demonstrate how to  
introduce the additional regularization of prior samples required for the 
TEKI approach. 
Subsection \ref{ssec:IEE} contains numerical experiments comparing
EKI and TEKI. The inverse problem is to find the slowness function
in an eikonal equation, given noisy travel time data. 
{We have also conducted numerical experiments for the permeability
in a porous medium equation. But because the results spell out exactly the
same message as those for the eikonal equation, we do not repeat them here;
rather we confine them to the appendix.}

\subsection{Temporal Discretization}
\label{ssec:TD}

The specific ensemble Kalman algorithms that we use are found by applying
the Euler discretization to the continuous time limit of each algorithm.
Discretizing \eqref{eq:has3} with adaptive time-step $h_n$ gives
\begin{subequations}
\label{eq:has33}
\begin{align}
u_{n+1}^{(j)}&= u_{n}^{(j)}-\frac{h_n}{J}\sum\limits_{k=1}^J E_{jk}(\mathfrak{u}_n)(u^{(k)}_n-\bar{u}_n)\\
&=u_{n}^{(j)}-\frac{h_n}{J}\sum\limits_{k=1}^J \Bigl(D_{jk}(\mathfrak{u}_n)
+\langle  C_0^{-1}u^{(j)}_n, u^{(k)}_n-\bar{u}_n \rangle_X\Bigr)(u^{(k)}_n-\bar{u}_n).
\end{align}
\end{subequations}
For the adaptive time-step we take, as implemented in \cite{KS18},
\begin{equation}
\label{eq:ts}
h_n=\frac{h_0}{\|E(\mathfrak{u}_n)\|_{F}+\delta},
\end{equation}
for some $h_0, \delta \ll 1,$ where $\|\cdot\|_F$ denotes the Frobenius norm, 
and $E$ is the matrix with entries $E_{jk}$ (rather than its Kronecker form
used earlier in \eqref{eq:E2}. The integration method for the EKI flow \eqref{eq:has} is identical, but with $E_{jk}$ replaced by $D_{jk}.$

\subsection{Spatial Discretization}
\label{ssec:SD}

We consider all inverse problems on the two dimensional spatial domain $\mathcal{D}=[0,1]^2.$ 
We let $-\triangle$ denote the Laplacian on $\mathcal{D}$ subject to homogeneous Neumann
boundary conditions. We then define
$$C_0=\left(-\triangle+\tau^2\right)^{-\alpha},$$
where $\tau \in \mathbb{R}^{+}$  denotes the inverse lengthscale 
of the random field and $\alpha \in \mathbb{R}^+$ determines the
regularity; specifically draws from the random field are H\"older with
exponent upto $\alpha-1$ (since spatial dimension $d=2$). 
From this we note that the eigenvalue problem
$$C_0\varphi_k=\lambda_k \varphi_k,$$
has solutions, for $\bbZ=\{0,1,2,\cdots\}$,
$$\varphi_k(x)=\sqrt{2}\cos(k\pi  x), \quad \lambda_k=\left(|k|^2\pi^2+\tau^2\right)^{-\alpha}, \quad k \in \bbZ_+^2.$$
Here $X=L^2(\mathcal{D},\bbR)$ and the $\varphi_k$ are orthonormal in 
$X$ with respect to the standard inner-product.
Draws from the measure $N(0,C_0)$ are given by the Karhunen-Lo\`{e}ve (KL) expansion
\begin{equation}
\label{eq:KL}
u=\sum_{k \in \bbZ_+^2} \sqrt{\lambda_k}\xi_k \varphi_k(x), \quad \xi_k \sim N(0,1)\quad\rm{i.i.d.}\, .
\end{equation}
This random function will be almost surely in $X$ and in $C(\mathcal{D},\bbR)$
provided that $\alpha>1$ and we therefore
impose this condition. 

Recall that for TEKI to be well-defined we require an initial ensemble
to lie in the Cameron-Martin space of the Gaussian measure $N(0,C_0)$. The
draws in \eqref{eq:KL} do not satisfy this criterion; indeed in infinite
dimensions samples from Gaussian measure never live in the Cameron-Martin space.
Instead we consider an expansion in the form
\begin{equation}
\label{eq:KL2}
v=\sum_{k \in \bbZ_+^2} \lambda_k^a\xi_k \varphi_k(x), \quad \xi_k \sim N(0,1)\quad\rm{i.i.d.}\ , 
\end{equation} 
and determine a condition on $a$ which ensures that such random functions lie 
in the domain of $C_0^{-\frac12}$, the required Cameron-Martin space. 
We note that
\begin{equation*}
\bbE\|v\|_K^2  = \bbE \|C_0^{-\frac12} v\|_X^2
= \bbE \|\sum_{k \in \bbZ_+^2} \lambda_k^{a-\frac12}\xi_k \varphi_k(x)\|_X^2
= \sum_{k \in \bbZ_+^2} \lambda_k^{2a-1}.
\end{equation*}

Since $\mathcal{D}$ is a two dimensional domain, the eigenvalues of the Laplacian grow asymptotically like $j$
if ordered on a one dimensional lattice $\bbZ_+$ indexed by $j$.
Thus it suffices to find $a$ to ensure 
$$\sum_{j \in \bbZ_+} j^{-\alpha(2a-1)}<\infty.$$
Hence we see that choosing
$a>\frac12+\frac{1}{2\alpha}$
will suffice.  The initial ensemble for both the EKI and TEKI is found 
by drawing functions $v$ with $a$ satisfying this inequality. 
The random function \eqref{eq:KL2} 
is H\"older with exponent upto $2a\alpha-1.$
\footnote{For non-integer $\beta$ we use the terminology that function is  
H\"older with exponent $\beta$ if
the function is in $C^{\floor*{\beta}}$ and its $\floor*{\beta}$-th derivatives are  H\"older  $\beta-\floor*{\beta}.$
In the context of this paper integer $\beta$ can be avoided because random Gaussian functions
are always  H\"older on an interval of exponents which is open from the right.
See \cite{DS16}.}

\begin{figure}[h!]
\centering
 \includegraphics[width=\linewidth]{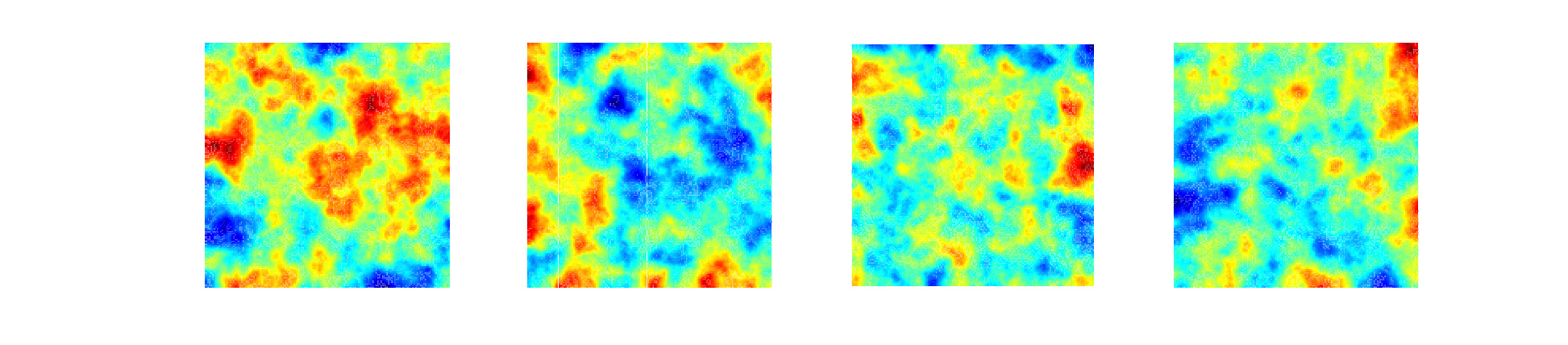}
\caption{KL draws from the prior.}
 \label{fig:EKI_draws}
\end{figure}

\begin{figure}[h!]
\centering
 \includegraphics[width=\linewidth]{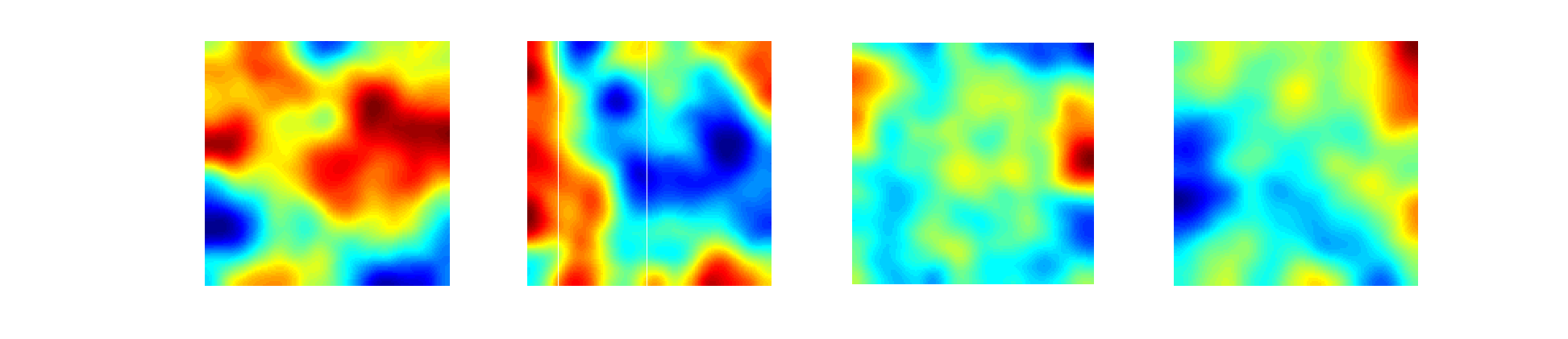}
 \caption{KL draws from the Cameron-Martin space of the prior.}
 \label{fig:TEKI_draws}
\end{figure}

To illustrate the foregoing we consider the Gaussian measure $N(0,C_0)$ 
which arises when $\alpha=2$ and with inverse lengthscale $\tau=15$. 
We study realizations from the KL expansion \eqref{eq:KL} and from
the TEKI-regularized expansion \eqref{eq:KL2} with $a=1>3/4$, using
common realizations of the random variables $\{\xi_k\}_{k \in \mathbb{Z}^2_+}$. 
Figure \ref{fig:EKI_draws} shows four random draws from the KL expansion 
\eqref{eq:KL} and Figure \ref{fig:TEKI_draws} from \eqref{eq:KL2}. 
The required higher regularity of initial samples for the TEKI method
is apparent. 
The functions in Figure \ref{fig:EKI_draws} have H\"older exponent
up to $1$, whilst those in Figure \ref{fig:TEKI_draws}
have H\"older exponent up to $3$.

\subsection{Inverse Eikonal Equation} 
\label{ssec:IEE}
We test and compare the EKI and TEKI on an inverse problem arising from
the eikonal equation. This partial differential equation arises in
numerous scientific disciplines, and in particular in seismic travel 
time tomography. Given a slowness or inverse velocity function 
$s(x) \in C^0(\bar{\mathcal{D}})$, characterizing the medium, and a source
location $x_0 \in \mathcal{D}$, the 
forward eikonal equation is to solve for travel time 
$T(x) \in C^0(\bar{\mathcal{D}})$ satisfying  
\begin{align}
\label{eq:eikonal}
|\nabla T(x)| &= s(x), \quad x \in \mathcal{D}\setminus \{x_0\} , \\
\label{eq:bc}
T(x_0) &= 0, \\
\label{eq:soner}
\nabla T(x) \cdot \nu(x) & \geq 0, \quad x \in \partial \mathcal{D}.
\end{align}
The forward solution $T(x)$ represents the shortest travel time from $x_0$ to a point in the domain $\mathcal{D}$. The  Soner boundary condition 
\eqref{eq:soner} imposes wave propagates along the unit outward normal 
$\nu(x)$ on the boundary of the domain. For the slowness function $s(x)$ we assume the positivity $s(x)>0$ which ensures well-posedness.
The unique solution can be characterized via the minimization procedure found in \cite{PLC82}.

The inverse problem is to determine the speed function $s$ from measurements
(linear mollified pontwise functionals $l_j(\cdot)$) of the travel time function $T$; for example
we might measure $T$ at specific locations in the domain $\bar{\mathcal{D}}.$
In order to ensure positivity of the speed function during inversion
we write $s=\exp(u)$ and invert for $u$ rather than $s$.
The data is assumed to take the form
\begin{equation}
\label{eq:func}
 y_j = l_j(T) + \eta_j, \quad j=1,\cdots,J, 
\end{equation}
where the $\eta_j$ are Gaussian noise, assumed independent, mean zero and
covariance $\Gamma$.
By defining $G_j(u) = l_j(T)$, we can rewrite \eqref{eq:func} as the inverse problem 
\begin{equation}
y = G(u) + \eta, \quad \eta \sim N(0,\Gamma).
\end{equation}
Further details on the well-posedness of the forward and inverse eikonal equation can be found by Elliott et al. in \cite{EDS11}.

The discretization of the forward model is based on a fast marching method 
\cite{EDS11,JAS99}, employing a uniform mesh with spacing $h_{*}=0.01$. 
On the left-hand boundary we choose $5$ random source points with $64$ equidistant pointwise measurements in the domain. For the inversion, we choose $\Gamma = \gamma^2I$ with $\gamma=0.01$. We fix the ensemble size at $J=100$ and the maximum number
of iterations at $23.$ To define the adaptive time stepping procedure
we take $h_0 = 0.02$ and $\delta=0.05$. 

Recall that the initial ensemble for EKI and TEKI, when chosen at random,
differ in terms of regularity: TEKI draws lie in the Cameron-Martin space
and hence are more regular than those for EKI. In order to thoroughly compare
the methodologies we will consider three different truth functions 
$u^{\dagger}$, one each matching the regularities of the EKI and TEKI draws
respectively,
and one with regularity lying between the regularities of the two EKI
and TEKI initializations. The EKI draws in each of cases 1, 2 and
3 are found by taking $\alpha=2$ (and by definition $a=0.5$) 
and the TEKI draws by taking $\alpha=2$ and $a=1.$ The truth in each
case is found by taking $\alpha=2, a=0.5$ (case 1), $\alpha=3.2, a=0.5$ 
(case 2) and $\alpha=2, a=1.$ (case 3). The resulting maximal H\"older
exponents are shown in Table \ref{table:1}. (Strictly speaking the maximal
 H\"older regularity is any value less than or equal to that displayed
in the table.)  
We will also study the EKI and
TEKI methods when initialized with the same initial ensemble, namely
the Karhunen-Lo\'{e}ve eigenfuctions $\varphi_k.$

\begin{table}[h!]
\centering
\begin{tabular}{||c c c c ||} 
 \hline
 Case & EKI & $u^{\dagger}$ & TEKI \\ [0.5ex] 
 \hline\hline
 \textbf{1.} & $1$ & $1$  &  $3$ \\ 
 \textbf{2.} & $1$ & $2.2$ &  $3$ \\
 \textbf{3.} & $1$ & $3$ &  $3$ \\ [1ex] 
 \hline
\end{tabular}
\bigskip
\caption{Maximal H\"older exponent for EKI and TEKI initial draws and truth 
$u^{\dagger}.$}
\label{table:1}
\end{table}

In addition to experiments where the initial ensembles are drawn at random 
from \eqref{eq:KL} (for EKI) and from \eqref{eq:KL2} (for TEKI) we
also consider experiments where the initial ensemble comprises
the eigenfunctions 
\begin{equation}
\label{eq:kl_basis}
u^{(j)}(x)=\varphi_{j}(x), \quad j=1,\ldots,J,
\end{equation}
and so it is the same for both EKI and TEKI.
The first motivation for using the eigenfunctions is to facilitate
a comparison between EKI and TEKI when they both use the same 
initial regularity, in contrast to the differing 
regularities in Table \ref{table:1}. The second motivation is that the  
choice of working with eigenfunctions, rather than random draws, has 
been show to guard against overfitting for EKI \cite{ILS13}.

To assess the performance of both methods for each case we consider analyzing this through two quantities, the relative error and the data misfit. These are defined, for EKI, as 
\begin{equation*}
\frac{\| u_{{\tiny{\rm EKI}}} - u^{\dagger}\|_{L^2(\mathcal{D})}}{\| u^{\dagger} \|_{L^2(\mathcal{D})}}, \quad \big\| y - {G}(u_{\rm EKI})\big\|_{\Gamma},
\end{equation*}
and similarly for TEKI. When we evaluate these error and misfit measures, 
we will do so by employing the mean of the current ensemble. To see the effect of overfitting, we use the noise level $\| \eta \| = \| y - G(u^{\dagger})\|$ as a benchmark. Throughout the experiments we show a progression through the $n=23$ iterations, which will be represented through $5$ sub-images related to the $(1^{\textrm{st}},5^{\textrm{th}},11^{\textrm{th}},17^{\textrm{th}},23^{\textrm{th}})$ iterations, ordered from the top left 
to the bottom right. The first image, at step $1$, is simply a single draw
from the initial ensemble; the remaining four images show the mean of the
ensemble at steps $5,11,17$ and $23$. For the KL basis the image shown
at step $1$ is hence just {one of the eigenfunctions $\varphi_j.$ 
As mentioned all of the numerics will be split into the 3 test cases as
described in Table \ref{table:1}.  

\begin{remark}
{We note that for the purposes of all the results presented
we set ${\boldsymbol{\lambda}} =1$. We have conducted additional 
experiments for other values, including 
${\boldsymbol{\lambda}}  = 0.1,10$ leading to no
qualitatively different behaviour than seen here. 
However in general it will be of interest to learn the parameter $\lambda$ 
as is standard in the solution of ill-posed inverse problems
\cite{EHNR96,BB18,GW85}. We do not focus on this question here, however,
as it distracts from the main message of the paper.}
\end{remark}

\subsubsection{Case 1.} 

Our first case corresponds to the first row of Table \ref{table:1},
as well as experiments in which both EKI and TEKI are
initialized with the KL eigenfunctions. The truth  is provided in Figure \ref{fig:truth_1}. We see no evidence of overfitting
and we notice the TEKI solutions outperform the EKI solutions,
and that the KL-initialized solutions are less accurate than those
found from TEKI using random draws to initialize the ensemble: 
see Figure \ref{fig:RE_DM_1}. 
Figures  \ref{fig:EKI_iter_low}--\ref{fig:TEKI_iter_low2} demonstrate the 
progression of the method in each case. As the iteration progresses we start to see differences in reconstruction for both EKI and TEKI. The regularity of the truth 
and the EKI initial ensemble match creating a superficial similarity in this
case; however the TEKI outperforms EKI despite this. When initializing with 
the KL basis, we notice a similar behaviour for both TEKI and EKI. However the added regularization for TEKI over EKI is manifest in a smaller error.

\newpage



\begin{figure}[h!]
\centering
\includegraphics[scale=0.25]{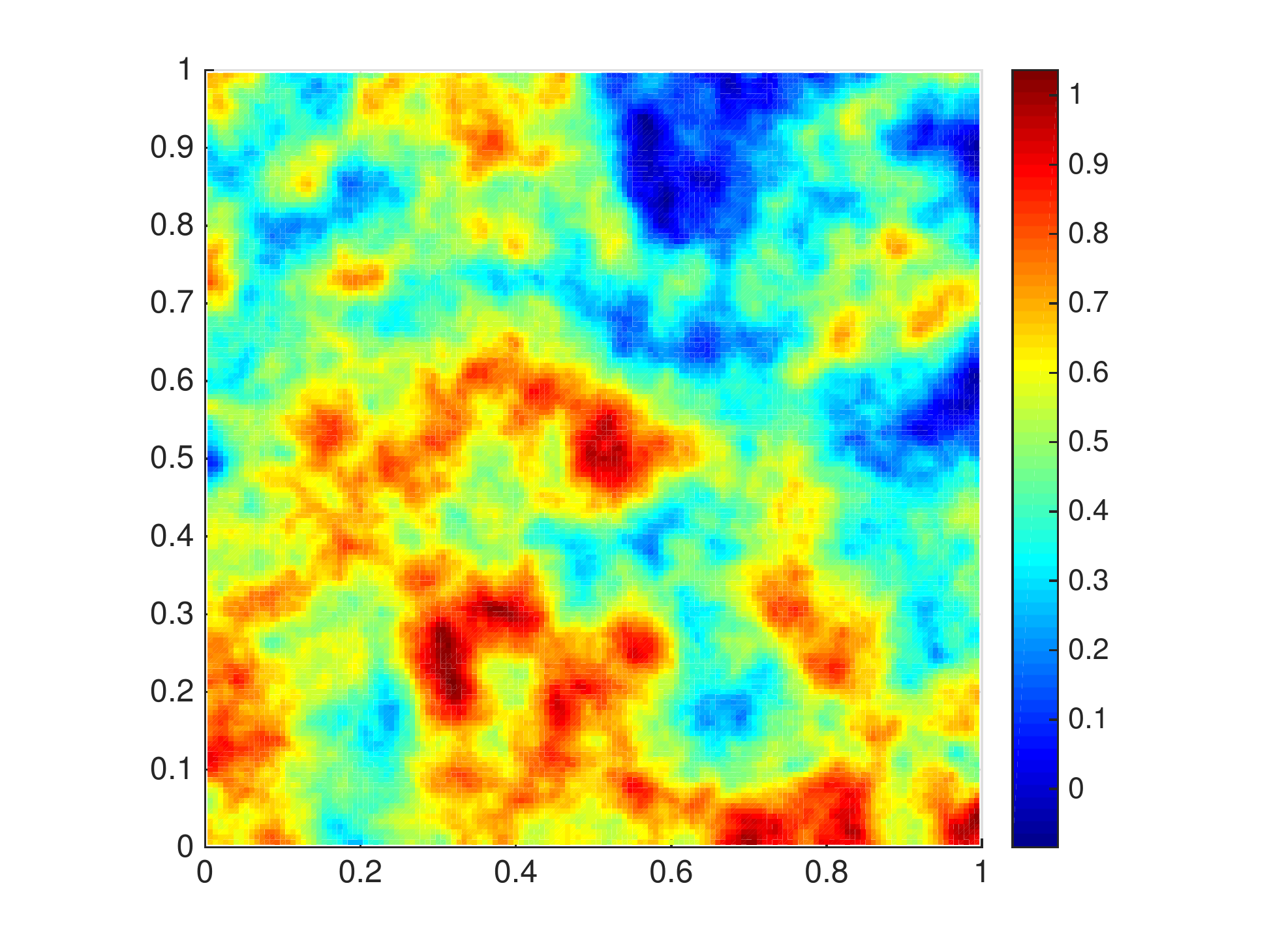}
\caption{Case 1. Gaussian random field truth.}
 \label{fig:truth_1}
\end{figure}

\begin{figure}[h!]
\centering
\includegraphics[width=\linewidth]{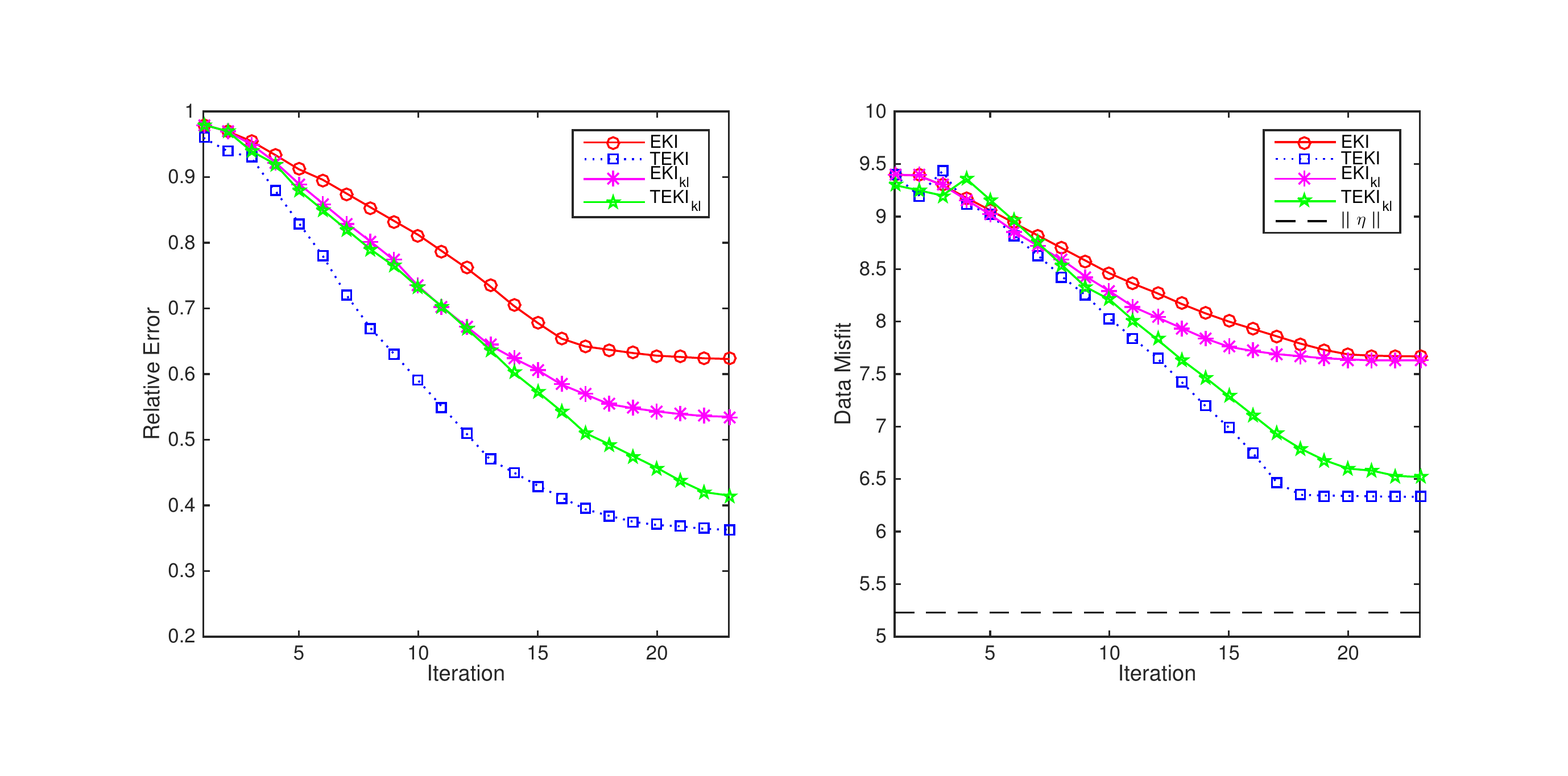}
\caption{Case 1. Relative errors and data misfits of each experiment.}
 \label{fig:RE_DM_1}
\end{figure}

\subsubsection{Case 2.} 
Our second test case compares both methods when the regularity of the truth is 
between that of EKI and TEKI initial ensemble members. For this test case the truth is shown in Figure \ref{fig:truth_2}. The numerics for this test case show 
a similar  ordering of the accuracy of the methods to that observed
in case 1. However Figure \ref{fig:RE_DM_2} also demonstrates that the 
relative error of EKI with random draws starts to diverge. This is linked
to the overfitting of the data, since in this case the data misfit goes 
below the noise level. The results are similar to those obtained in 
\cite{ILS13} obtained for EKI in discrete from \eqref{eq:update}. 
This over-fitting is demonstrated in Figure \ref{fig:EKI_iter} 
which highlights the difficulty of reconstructing the truth from 
Figure \ref{fig:truth_2} within the linear span of the EKI initial ensemble.

For EKI and TEKI with a KL basis, we see immediately that the divergence of the error does not occur here. Instead the EKI algorithm performs relatively well, similarly to TEKI. However the added regularization again leads to smaller errors
in TEKI than in EKI. Interestingly we also notice that there is little difference in TEKI for both the random draws and the KL basis. These results can be seen in Figures \ref{fig:TEKI_iter}--\ref{fig:TEKI_iter2}. It is worth mentioning 
that, although Figure \ref{fig:RE_DM_2} shows that for TEKI with random draws
the misfit reaches the noise level, running for further iterations does
not result in over-fitting (misfit falling below the noise level).

\newpage

\begin{figure}[h!]
\centering
\includegraphics[width=\linewidth]{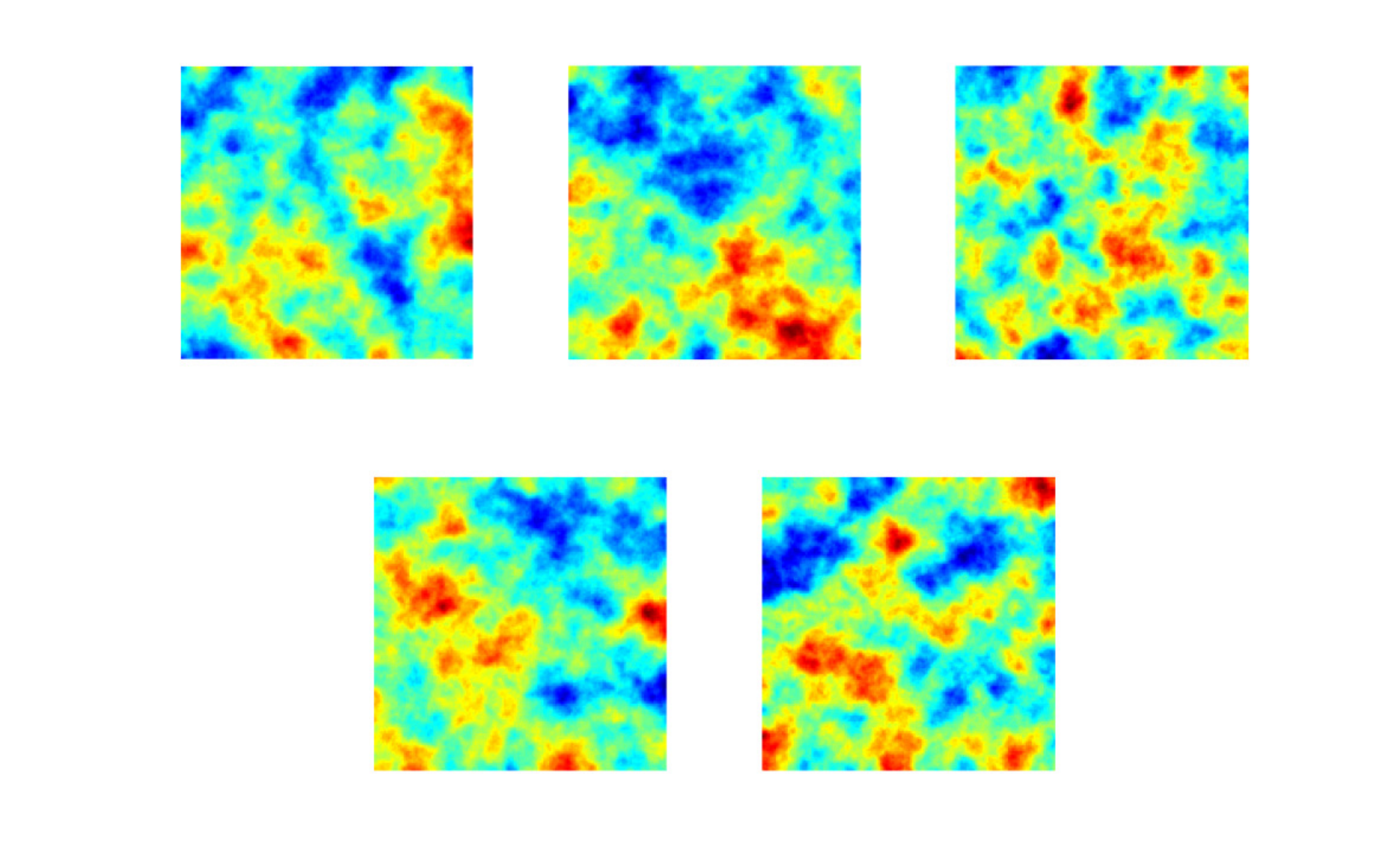}
\caption{Case 1. Progression of EKI through iteration count with prior random draws.}
 \label{fig:EKI_iter_low}
\end{figure}

\begin{figure}[h!]
\centering
\includegraphics[width=\linewidth]{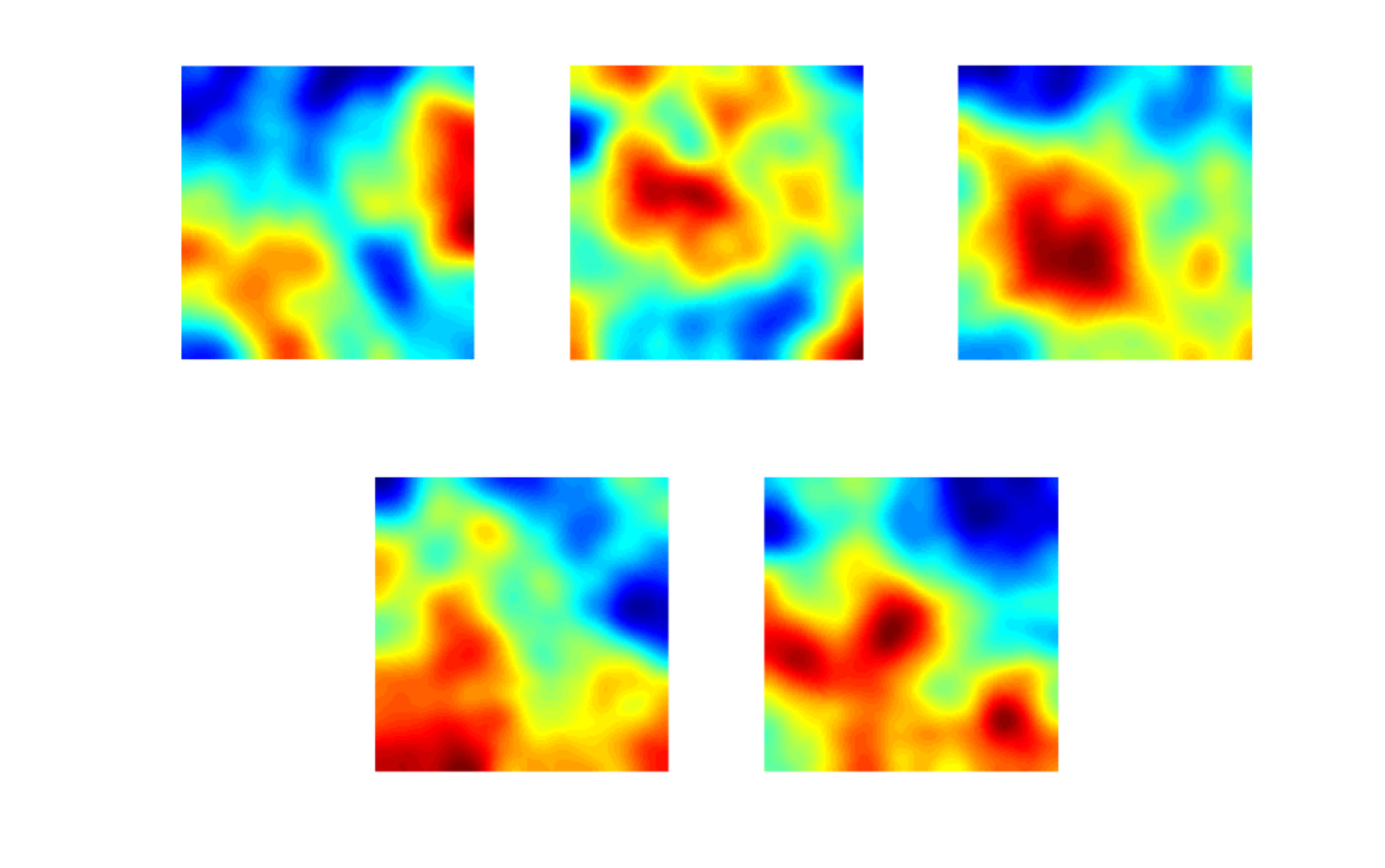}
\caption{Case 1. Progression of TEKI through iteration count with prior random draws.}
 \label{fig:TEKI_iter_low}
\end{figure}

\newpage

\begin{figure}[h!]
\centering
\includegraphics[width=\linewidth]{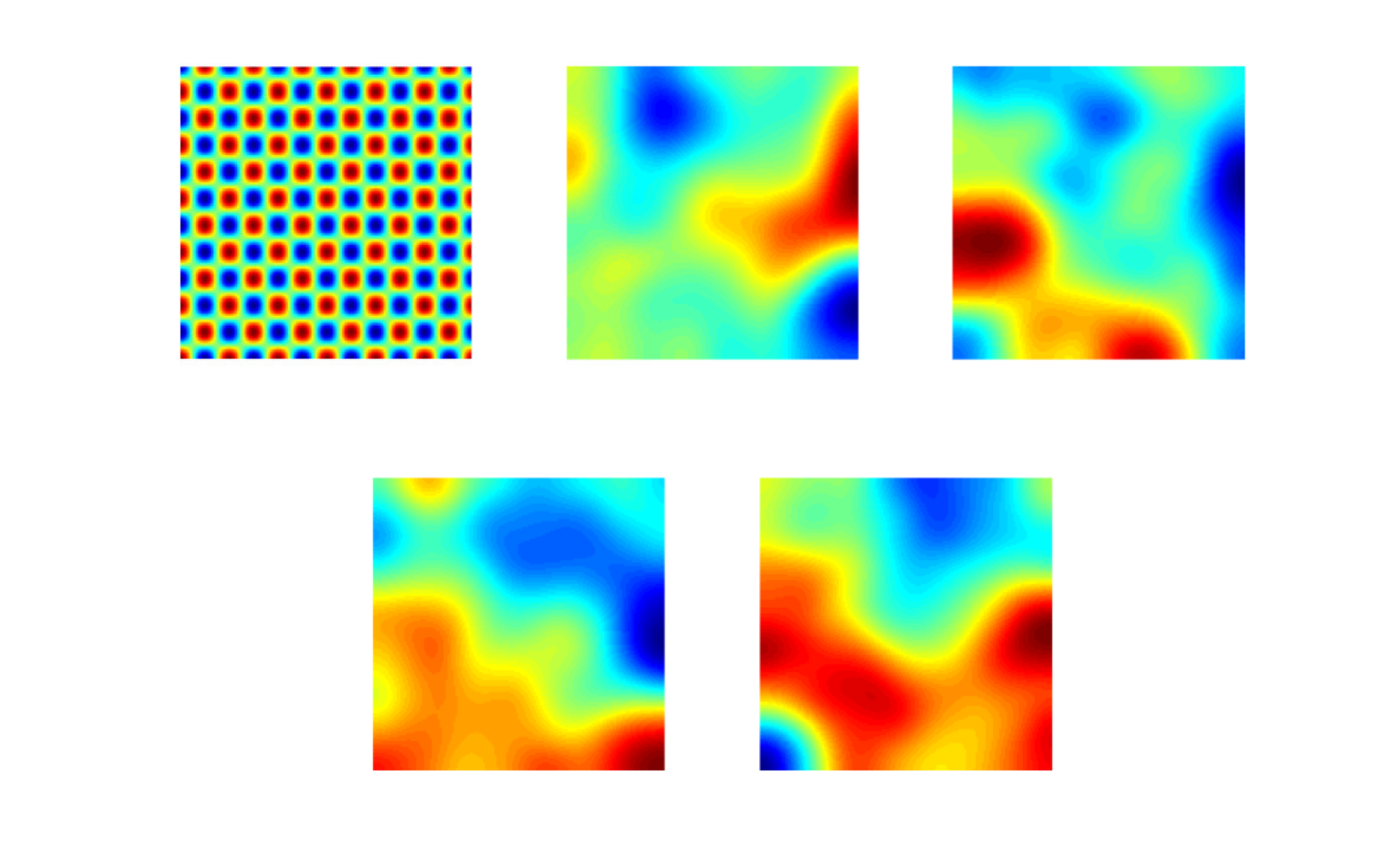}
\caption{Case 1. Progression of EKI through iteration count with KL basis.}
 \label{fig:EKI_iter_low2}
\end{figure}

\begin{figure}[h!]
\centering
\includegraphics[width=\linewidth]{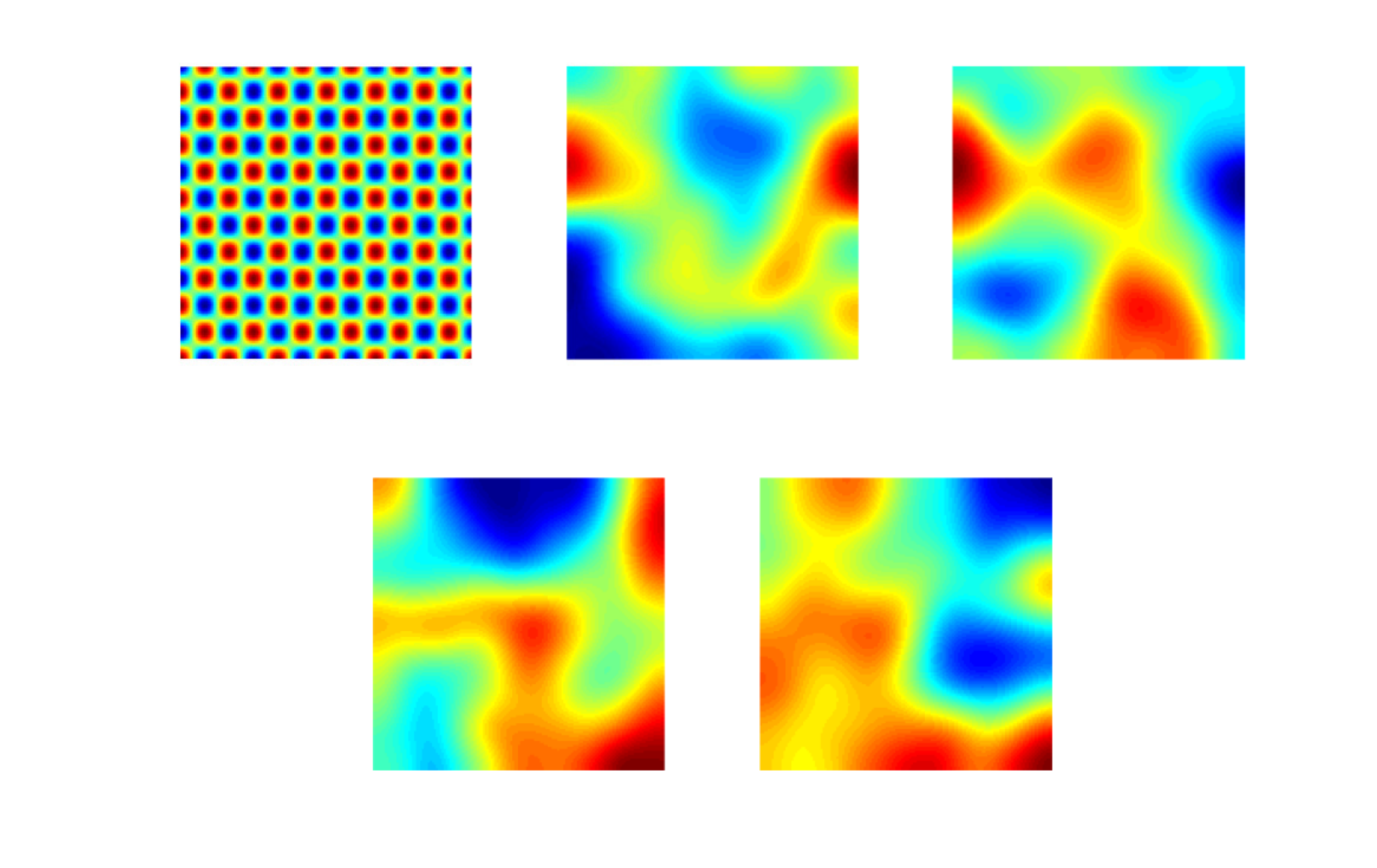}
\caption{Case 1. Progression of TEKI through iteration count with KL basis.}
 \label{fig:TEKI_iter_low2}
\end{figure}

\begin{figure}[h!]
\centering
\includegraphics[scale=0.25]{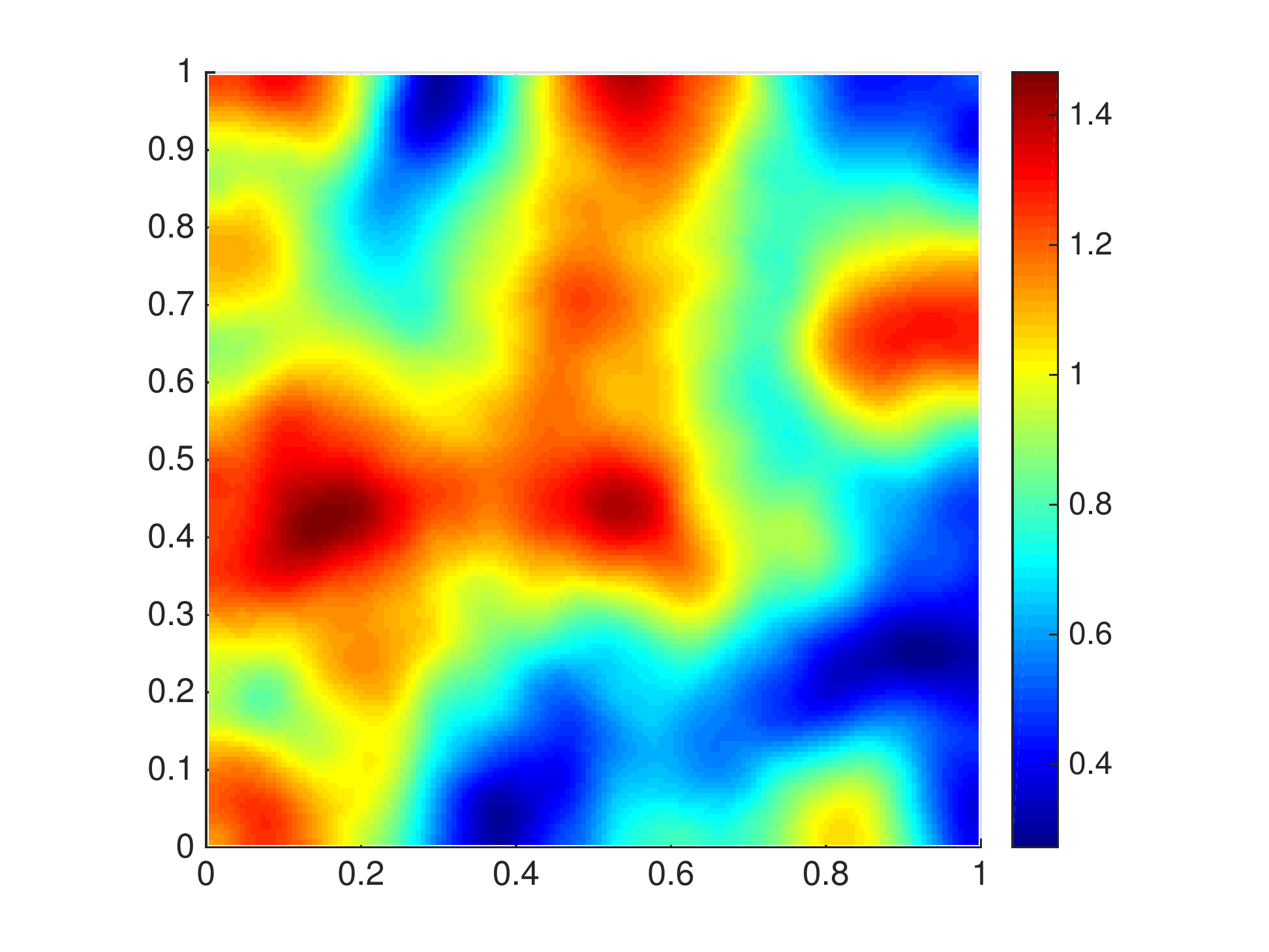}
\caption{Case 2. Gaussian random field truth.}
 \label{fig:truth_2}
\end{figure}

\begin{figure}[h!]
\centering
\includegraphics[width=\linewidth]{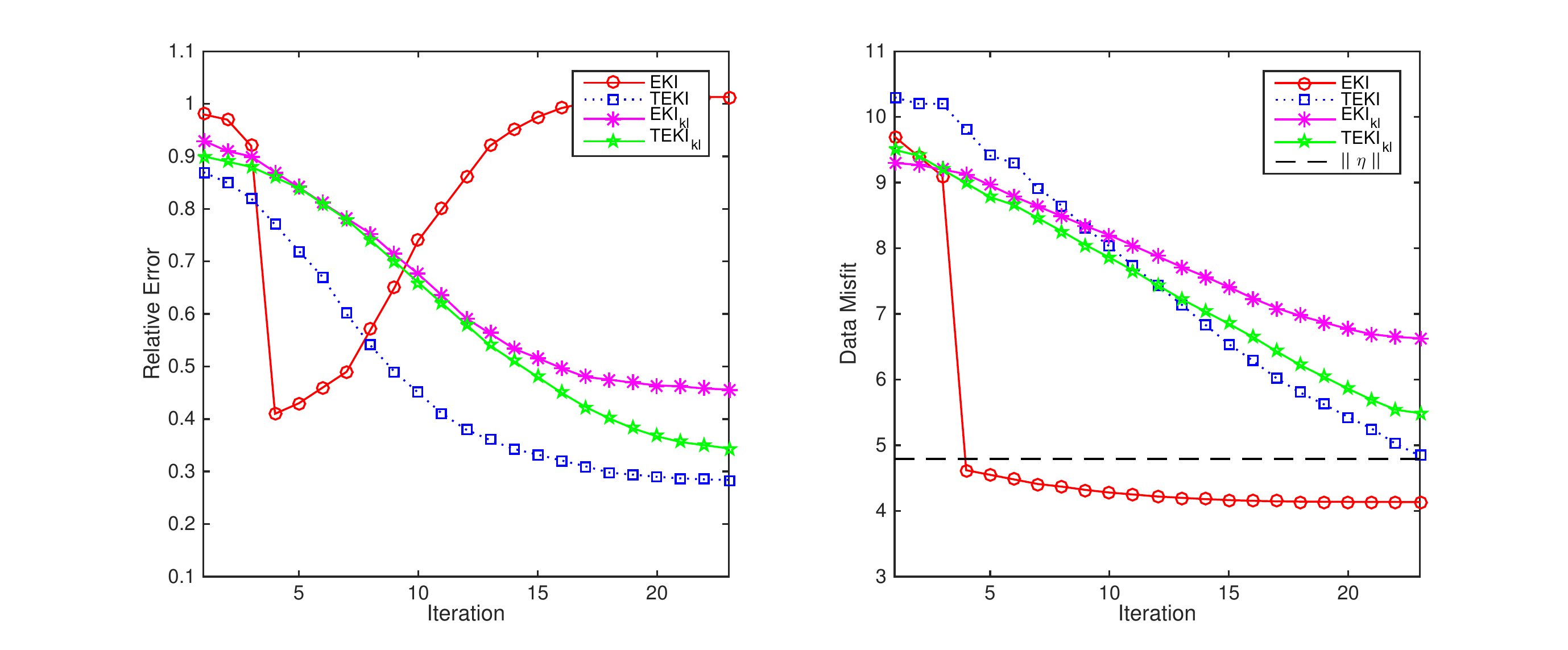}
\caption{Case 2. Relative errors and data misfits of each experiment.}
 \label{fig:RE_DM_2}
\end{figure}

\newpage

\begin{figure}[h!]
\centering
\includegraphics[width=\linewidth]{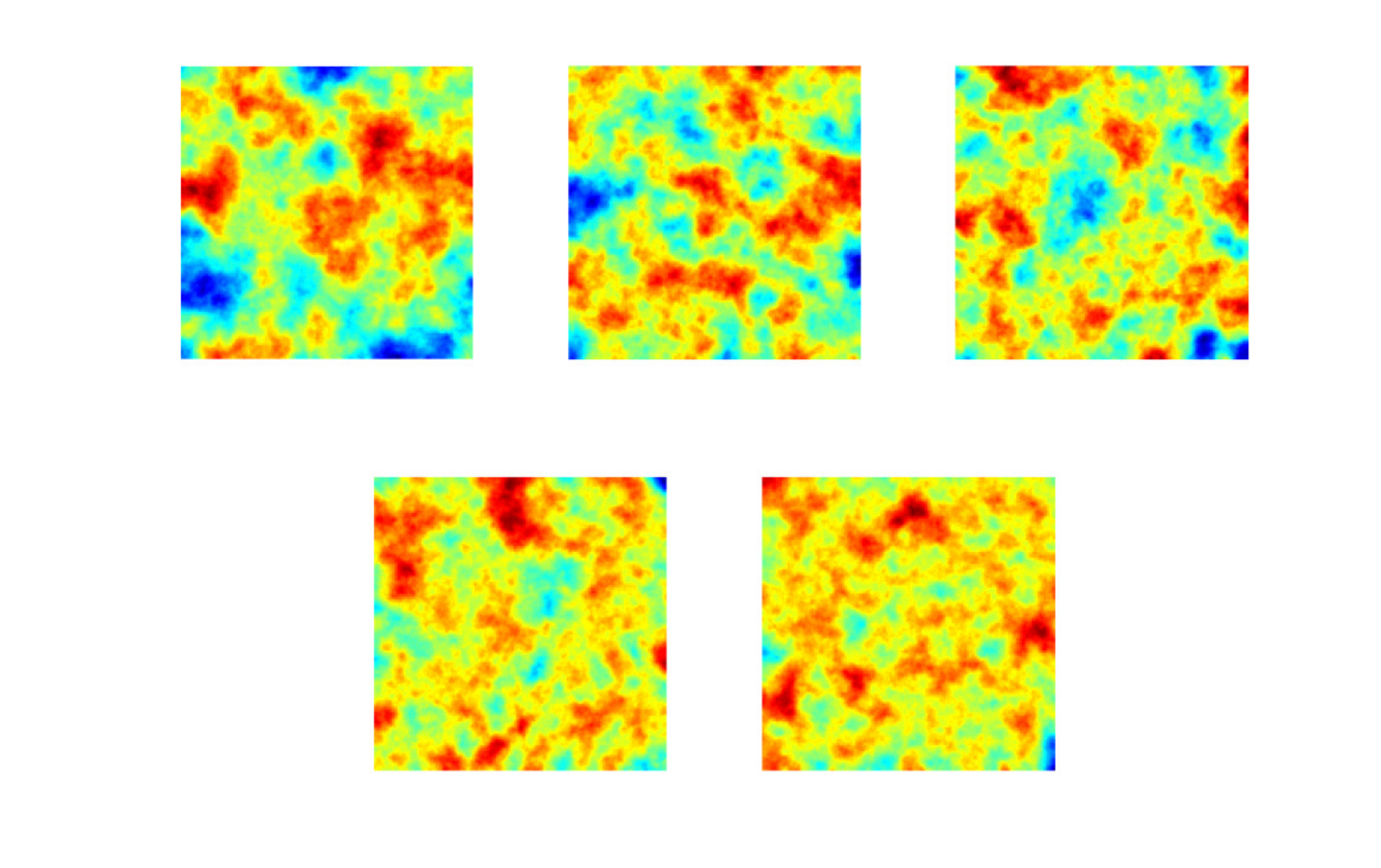}
\caption{Case 2. Progression of EKI through iteration count with prior random draws.}
 \label{fig:EKI_iter}
\end{figure}

\begin{figure}[h!]
\centering
\includegraphics[width=\linewidth]{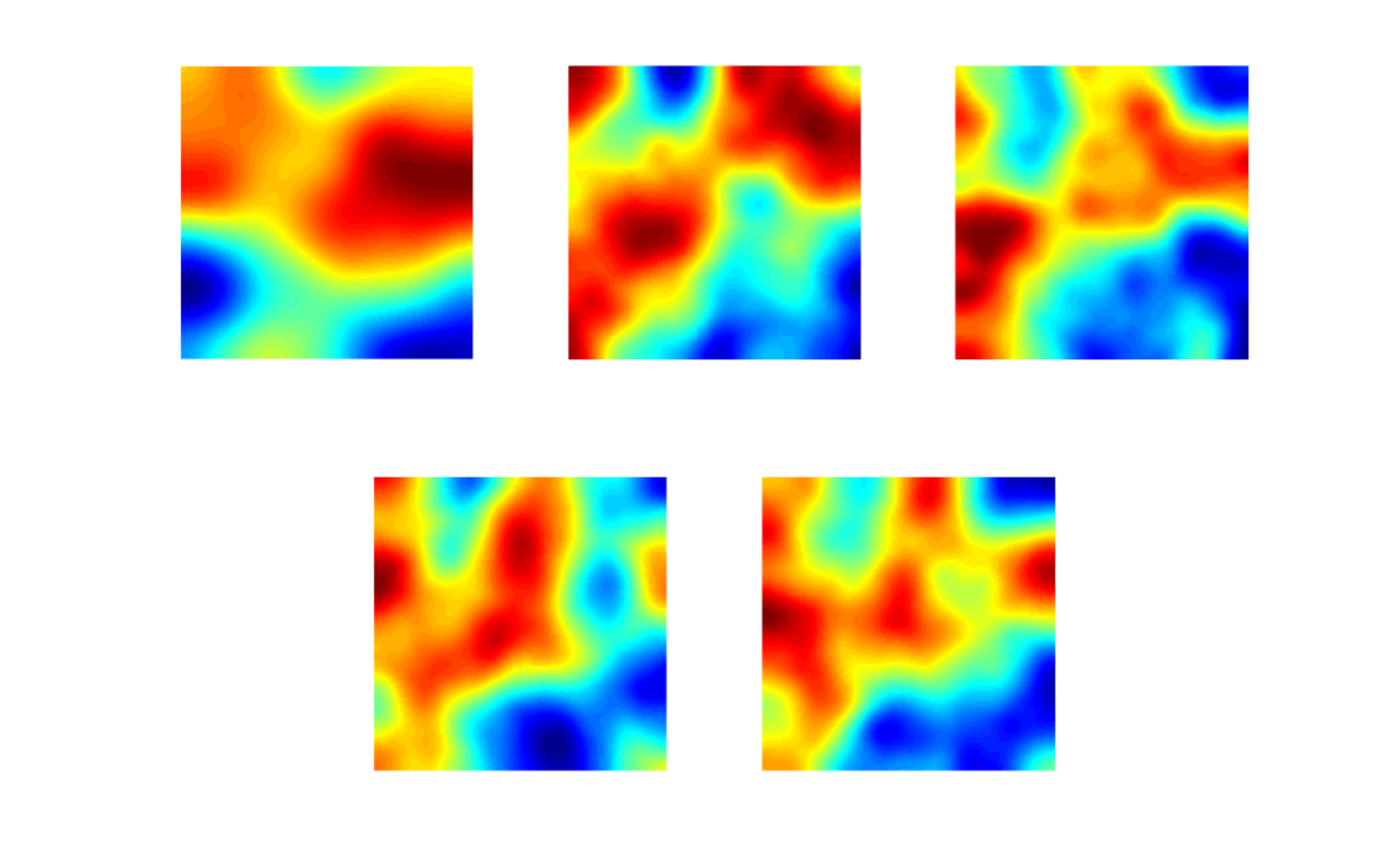}
\caption{Case 2. Progression of TEKI through iteration count with prior random draws.}
 \label{fig:TEKI_iter}
\end{figure}

\newpage

\begin{figure}[h!]
\centering
\includegraphics[width=\linewidth]{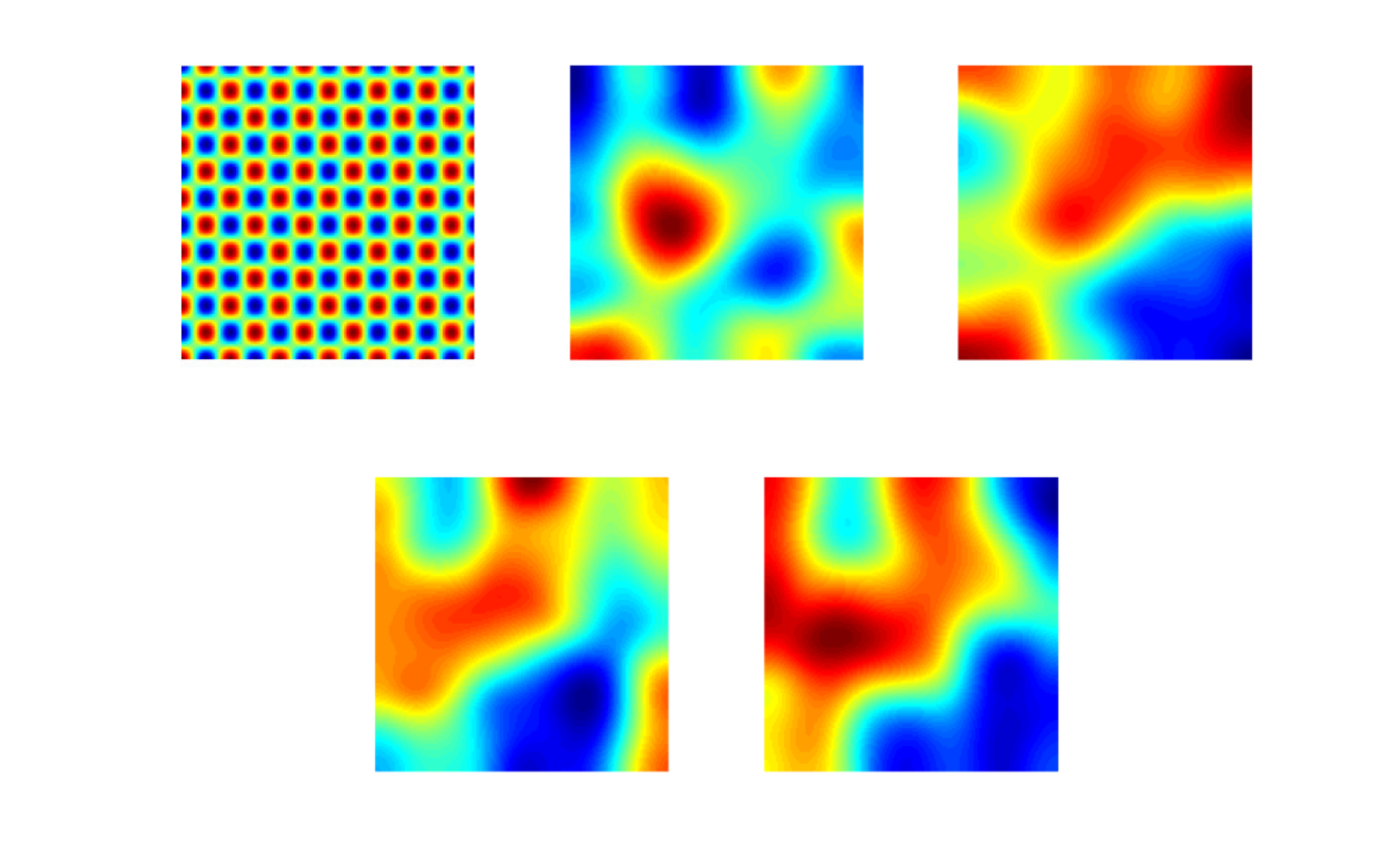}
\caption{Case 2. Progression of EKI through iteration count with KL basis.}
 \label{fig:EKI_iter2}
\end{figure}

\begin{figure}[h!]
\centering
\includegraphics[width=\linewidth]{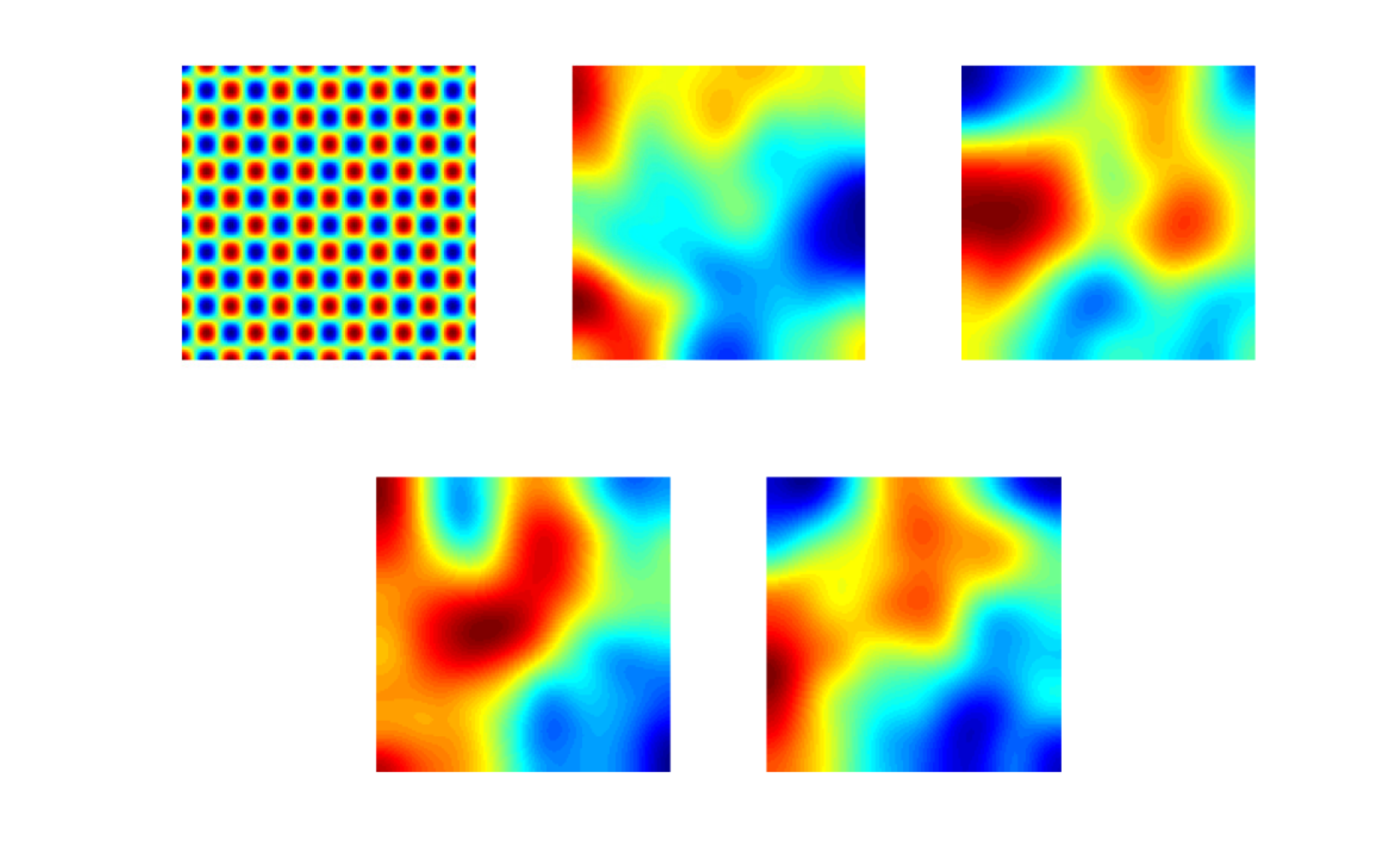}
\caption{Case 2. Progression of TEKI through iteration count with KL basis.}
 \label{fig:TEKI_iter2}
\end{figure}

\newpage

\subsubsection{Case 3.} 
Our third and final test case compares both methods, in a setting in
which the regularity of the random draws for TEKI is the same as for the truth, shown in Figure \ref{fig:truth_3}. Figure \ref{fig:RE_DM_3} demonstrates almost identical outcomes as in Case 2. Figures \ref{fig:EKI_iter_high}--\ref{fig:TEKI_iter_high2} show the progression of the iterations in the four different cases.

As the value of the regularity is higher compared to the previous case, we see the degeneracy of the EKI with random draws. This is highlighted in Figure \ref{fig:RE_DM_3} where we notice the same effect of the overfitting of the data as in Figure \ref{fig:RE_DM_2}. This is similar to Figure \ref{fig:EKI_iter_high} in that an over-fitting
phenomenon leads to a poor fitting of the truth as the iteration progresses.

All other methods, which include TEKI with random draws and both methods 
initialized with the KL basis, perform similarly. This can be accredited to the fact that all of their initial ensembles begin with a high regularity. As we observe from 
Figures \ref{fig:TEKI_iter_high}--\ref{fig:TEKI_iter_high2} the added regularization comes into play with noticeable differences. This can be seen further from Figure \ref{fig:RE_DM_3}.

\begin{figure}[h!]
\centering
\includegraphics[scale=0.25]{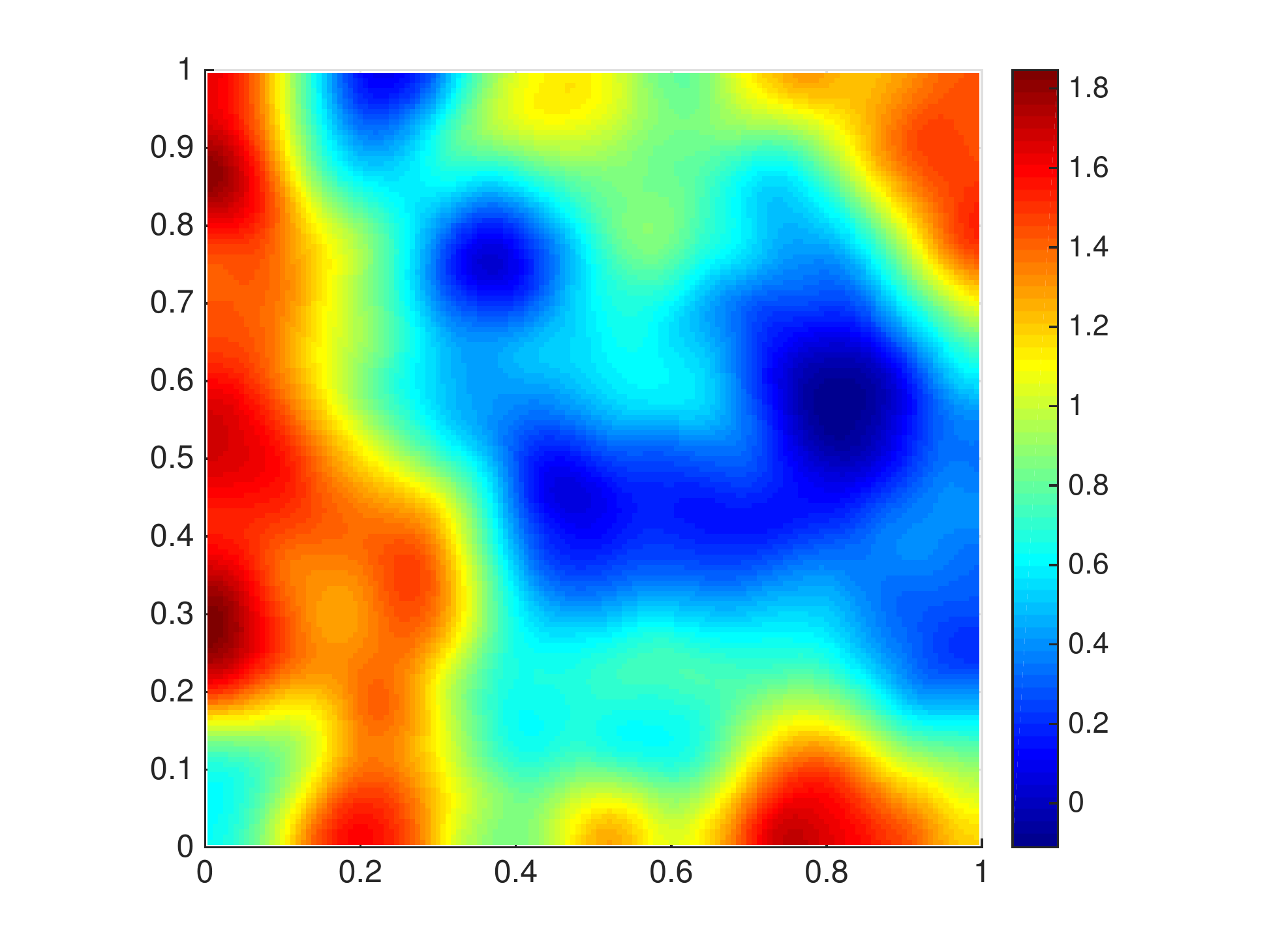}
\caption{Case 3. Gaussian random field truth.}
 \label{fig:truth_3}
\end{figure}

\begin{figure}[h!]
\centering
\includegraphics[width=\linewidth]{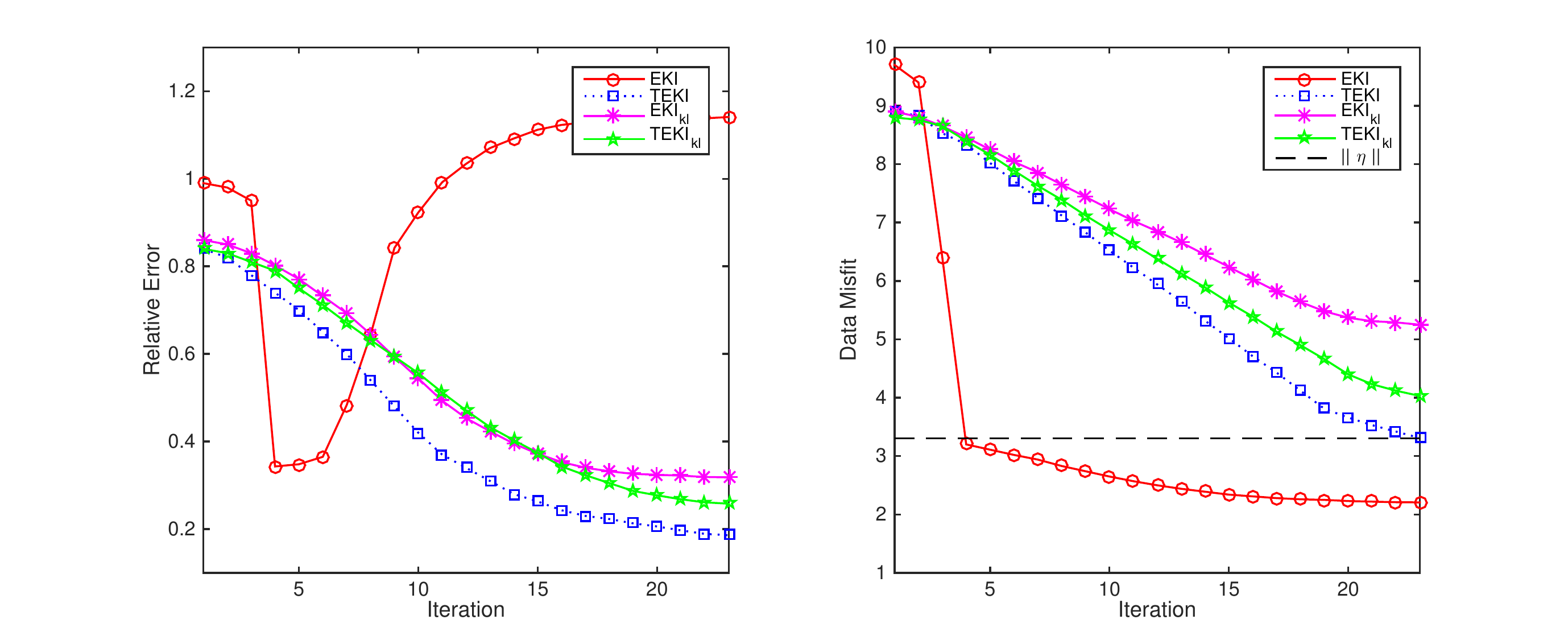}
\caption{Case 3. Relative errors and data misfits of each experiment.}
 \label{fig:RE_DM_3}
\end{figure}

\newpage

\begin{figure}[h!]
\centering
\includegraphics[width=\linewidth]{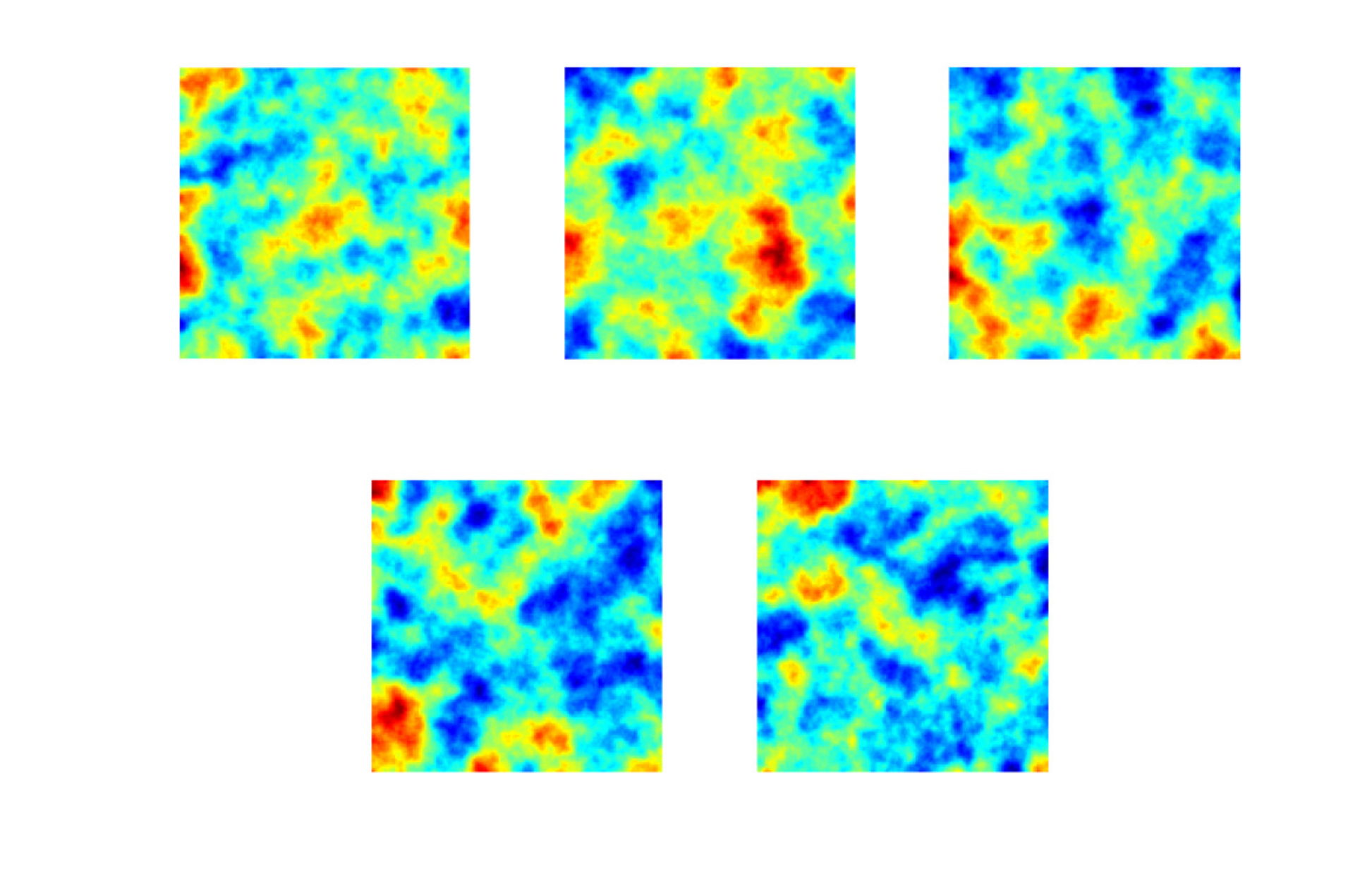}
\caption{Case 3. Progression of EKI through iteration count with random draws.}
 \label{fig:EKI_iter_high}
\end{figure}

\begin{figure}[h!]
\centering
\includegraphics[width=\linewidth]{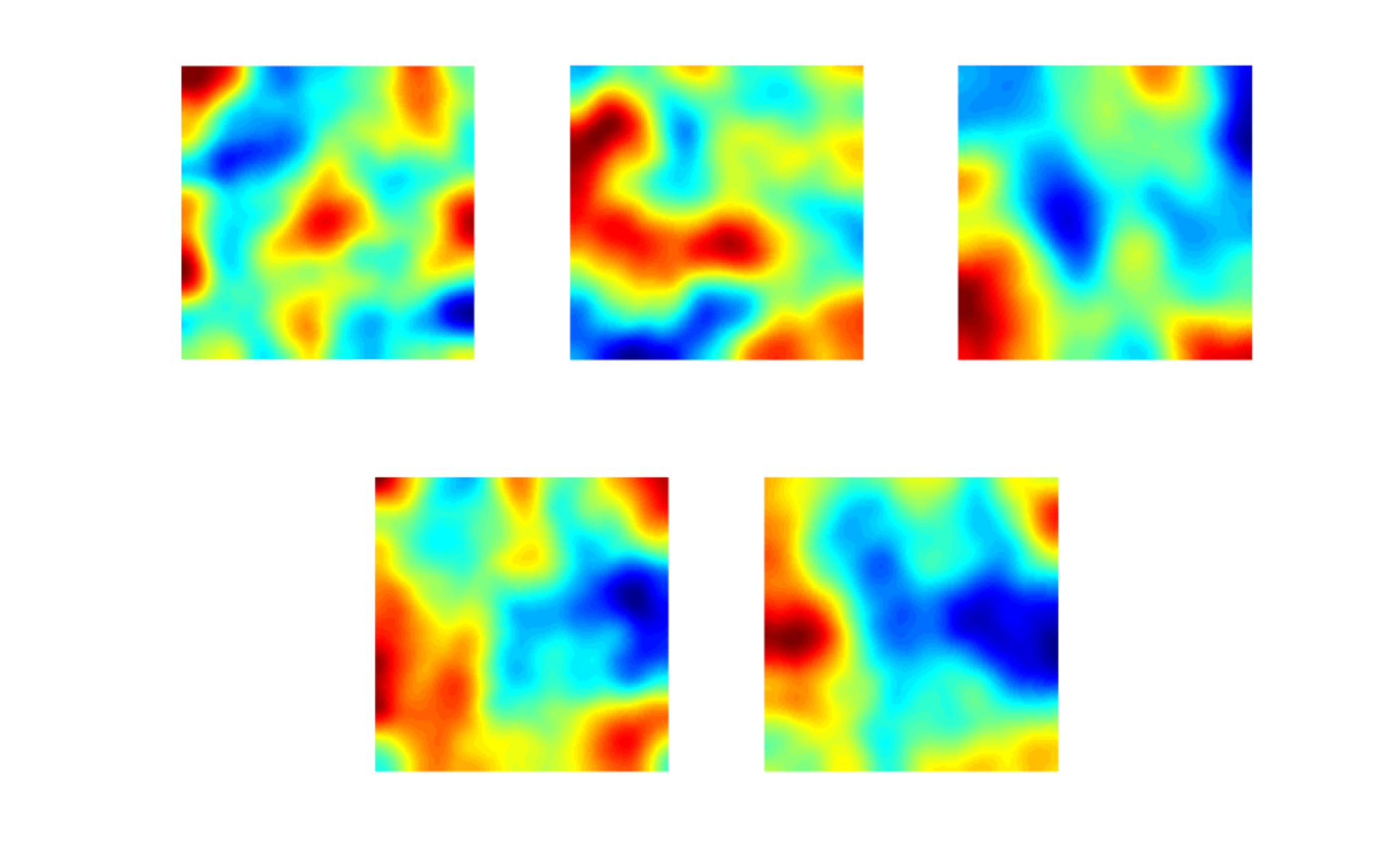}
\caption{Case 3. Progression of TEKI through iteration count with random draws.}
 \label{fig:TEKI_iter_high}
\end{figure}

\newpage

\begin{figure}[h!]
\centering
\includegraphics[width=\linewidth]{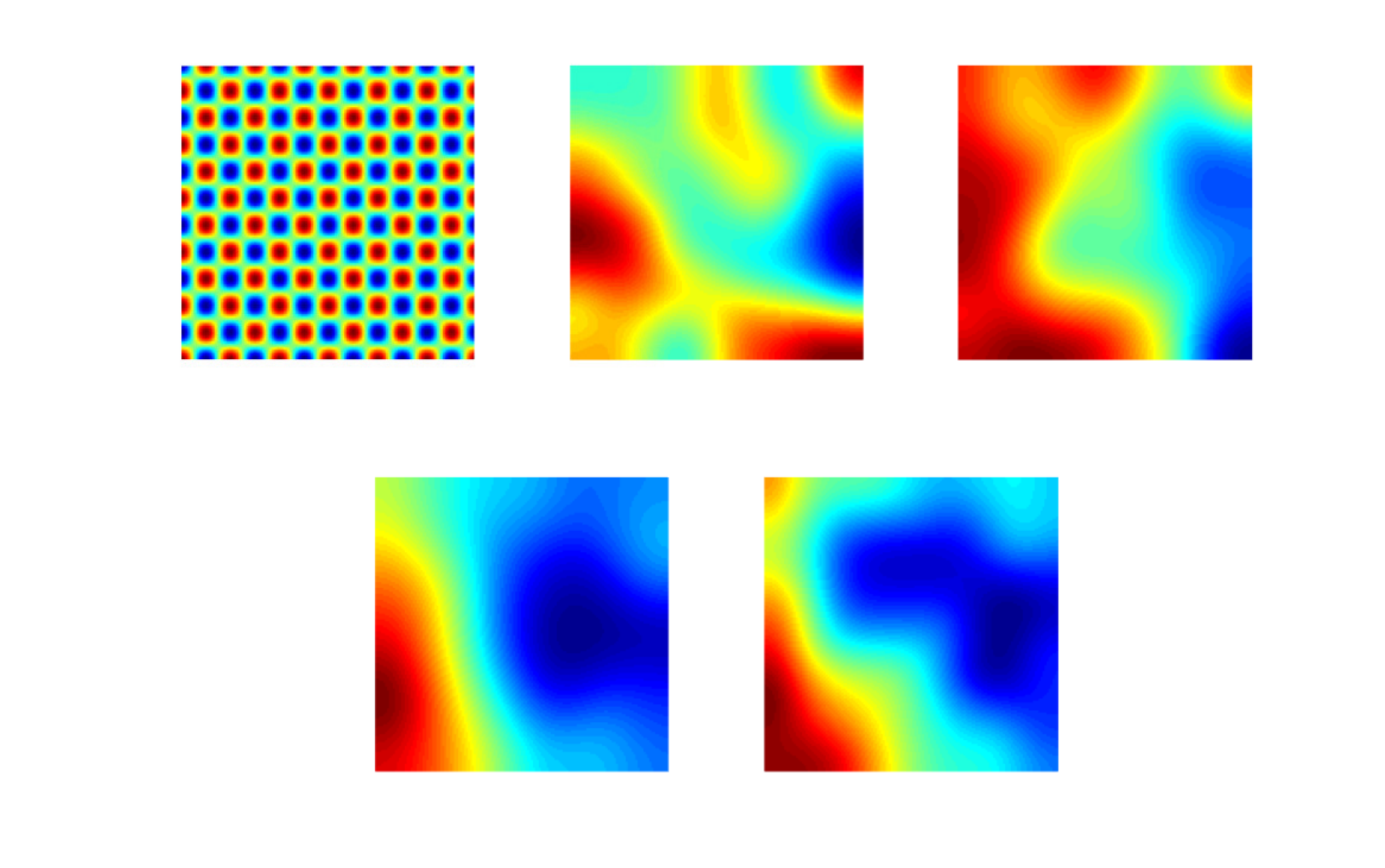}
\caption{Case 3. Progression of EKI through iteration count with KL basis.}
 \label{fig:EKI_iter_high2}
\end{figure}

\begin{figure}[h!]
\centering
\includegraphics[width=\linewidth]{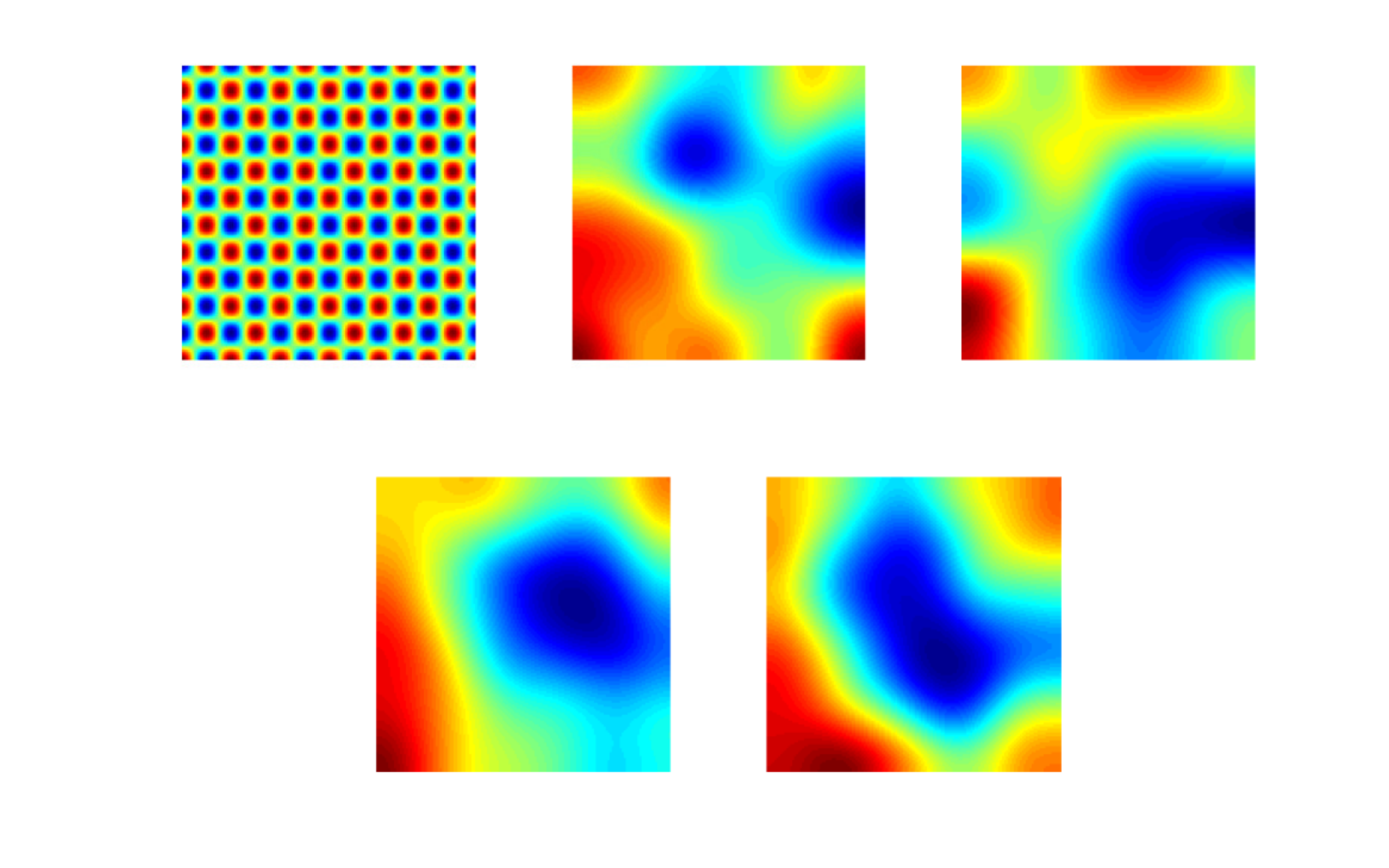}
\caption{Case 3. Progression of TEKI through iteration count with KL basis.}
 \label{fig:TEKI_iter_high2}
\end{figure}

\newpage


\newpage

\section{Conclusions}
\label{sec:CON}

{Regularization is a central idea in optimization and statistical inference problems. In this work we considered adapting EKI 
methods to allow for Tikhonov regularization, leading to the TEKI
methodology. Inclusion of this Tikhonov regularizer within EKI leads
demonstrably to improved reconstructions of our unknown; we have shown this
on an inverse eikonal equation and porous medium equation,
using both random draws from the prior and 
from the KL basis to initialize the ensemble methods. We also derived 
a continuous time limit of TEKI and studied its properties, including
showing the existence of the TEKI flow and its long-time behaviour. 
In particular we showed that the TEKI flow always reaches consensus --
ensemble members collapse on one another.
There are a several potentially fruitful new directions one can consider which stem
from this work; we outline a number of them.}

\begin{itemize}

\item {The inclusion of regularization in this paper was specific to the case of the Cameron-Martin space and hence Tikhonov-like Sobolev
regularization. It would be of interest to generalize to the regularizers of 
other forms such as $L_1$ and total variation penalties \cite{BB18,EHNR96}.}

\item {Understanding the EnKF as an optimizer is important, specifically in terms of how effective it is in comparison with other derivative-free optimization methods. 
Using the analysis tools being developed in the work in progress \cite{CST18}
could be helpful in this context.} 

\item {
It would of interest to see how the techniques discussed in \cite{CIRS18},
where hierarchical EKI is introduced, could be improved by use of TEKI. The analysis presented here could be extended to the hierarchical setting. }

\item {Related to hierarchical techniques discussed, one could treat the regularization parameter ${\boldsymbol{\lambda}}$ as a further unknown in our inverse problem. As this can be seen as a scaling factor in the 
covariance, it could be treated as an amplitude factor, in the usual way presented through Whittle-Mat\'{e}rn priors \cite{RHL14}, and learned hierarchically as in
\cite{CIRS18}.
Alternatively it might be of interest to study the adaptation of other standard
statistical techniques for estimation of $\boldsymbol{\lambda}$ to this inverse problem
setting \cite{BB18,EHNR96,GW85}.}

\item It is possible to impose convex constraints directly into EKI; see
\cite{ABLSS19}. However non-convex constraints present
difficulties in the framework described in that paper as non-uniqueness
may arise in the optimization problems to be solved at each step of the
algorithm. Nonconvex equality constraints could be imposed by using 
the methods in this paper to impose them in a relaxed form. 
A constraint set defined by the equation $W(u)=0$ could be approximately
imposed by appending \eqref{eq:inv_re1}-\eqref{eq:inv_re2} with the equation $W(u)+\eta_3=0$
and choosing $\eta_3$ to be a Gaussian with small variance. 

\end{itemize}

\section*{Acknowledgments}

NKC acknowledges a Singapore Ministry of Education Academic Research Funds Tier 2 grant [MOE2016-T2-2-135]. The work of AMS was funded by
US ONR grant N00014-17-1-2079 and 
the US AFOSR grant  FA9550-17-1-0185.  The research of XTT is supported by the National University of Singapore grant R-146-000-226-133.
The authors are grateful to Vanessa Styles (University of Sussex)
for providing a solver for the eikonal equation, and guidance on its use.

\bibliographystyle{plain} 
\bibliography{elliptic.bib}

\section{appendix}
\label{sec:app}
\subsection{Darcy flow}
The purpose of this Appendix is to exhibit numerical results demonstrating
that what we showed for the eikonal equation in section  \ref{sec:NUM}
is not specific to that particular inverse problem. 
In order to do this we use exactly the
same experimental set-up as in section \ref{sec:NUM}, simply 
replacing the eikonal equation by Darcy flow in a porous medium and
defining a relevant inverse problem.
Thus to explain the numerical experiments which follow in this section it 
suffices to simply define the forward problem and the observation operator.

Given a domain $D = [0,1]^2$ and  real-valued permeability function $\kappa$ defined on $D$,  the forward model 
is concerned with determining a real-valued pressure (or hydraulic head) function
$p$ on $D$ from
\newpage
\begin{equation}
\label{eq:darcy}
-\nabla\cdot({\kappa}\nabla p) = f, \quad x \in  D, \\
\end{equation}
with mixed boundary conditions
\begin{equation*}
p(x_1,0) = 100, \ \ \ \  \frac{\partial p}{\partial x_1}(1,x_2) = 0, \ \ -\kappa\frac{\partial p}{\partial x_1}(0,x_2) = 500, \ \ \  \frac{\partial p}{\partial x_2} (x_1,1) = 0.
\end{equation*}
Throughout we simply use the source $f \equiv 1$. The inverse problem is concerned with the recovery of $u=\log(\kappa)$ from mollified pointwise linear functionals of the form $G_{j}(u)=l_j(u)$ with $l_j$ denoting mollified pointwise observation 
at $x_j.$ The results that follow have no commentary because the phenomena
exhibited are identical to what we see for the eikonal equation.

\section*{Case 1.}

\begin{figure}[h!]
\centering
\includegraphics[scale=0.25]{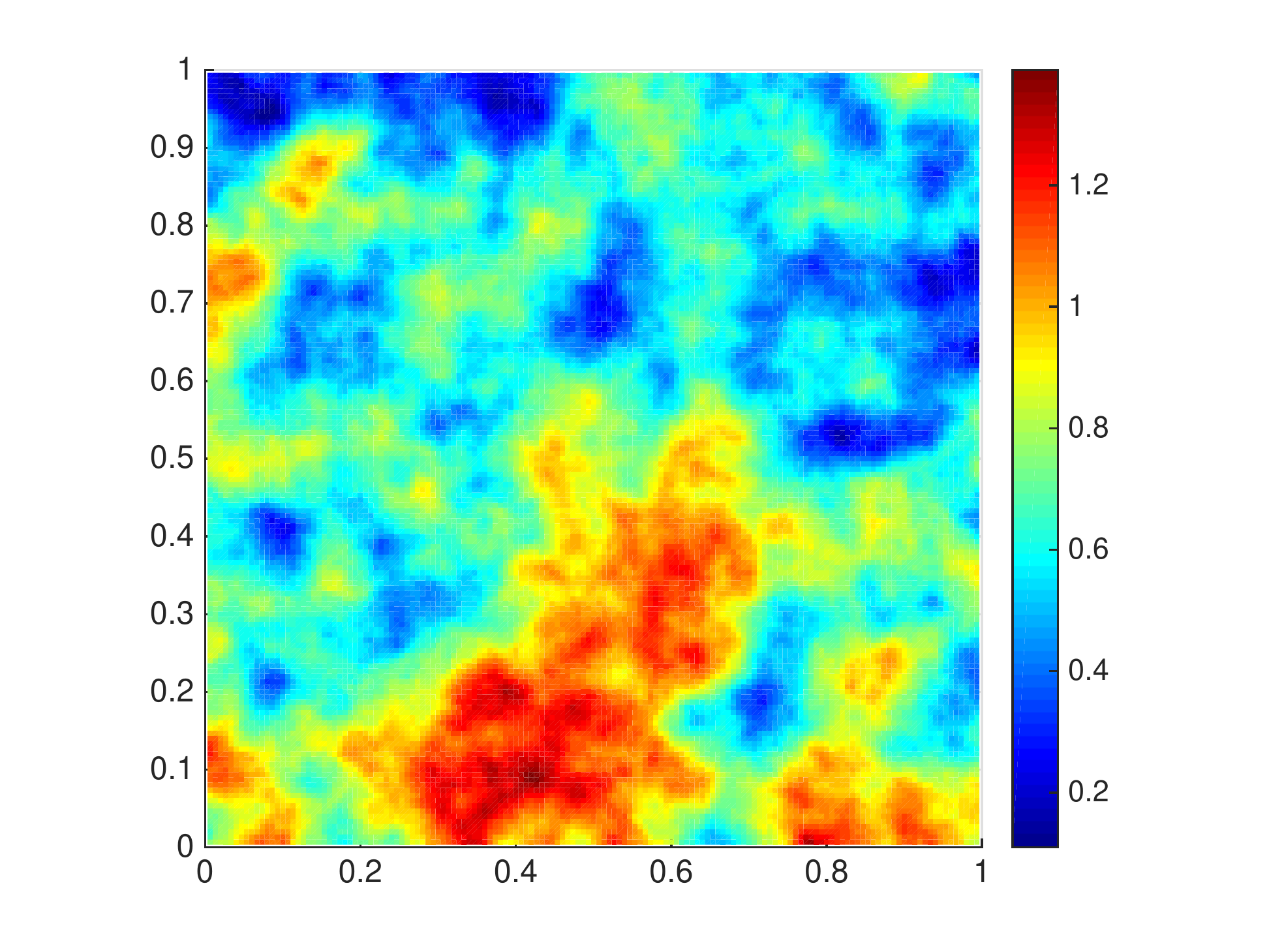}
\caption{Case 1. Gaussian random field truth.}
 \label{fig:truth_1_df}
\end{figure}

\begin{figure}[h!]
\centering
\includegraphics[width=\linewidth]{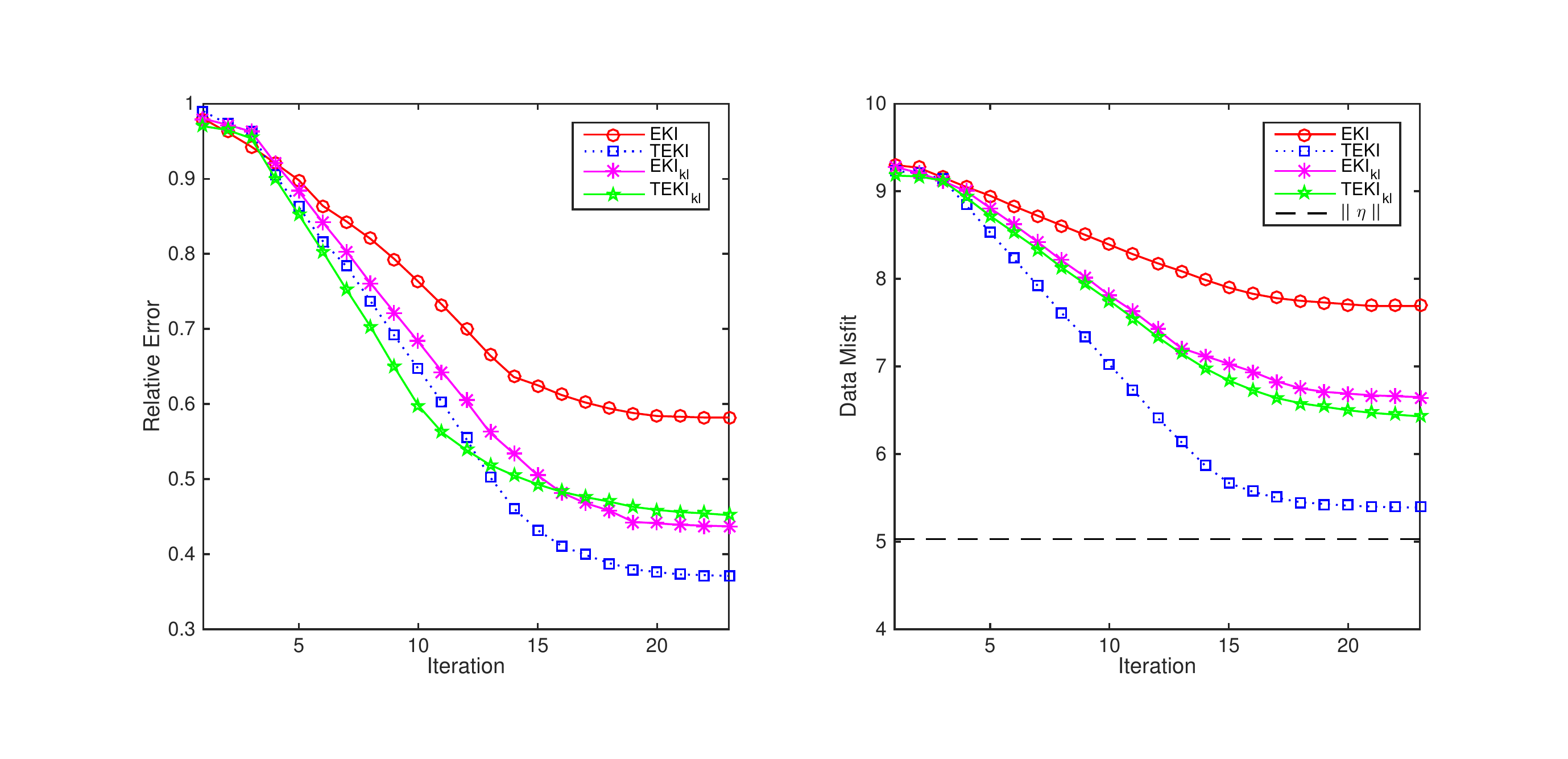}
\caption{Case 1. Relative errors and data misfits of each experiment.}
 \label{fig:RE_DM_1_df}
\end{figure}

\newpage

\begin{figure}[h!]
\centering
\includegraphics[width=\linewidth]{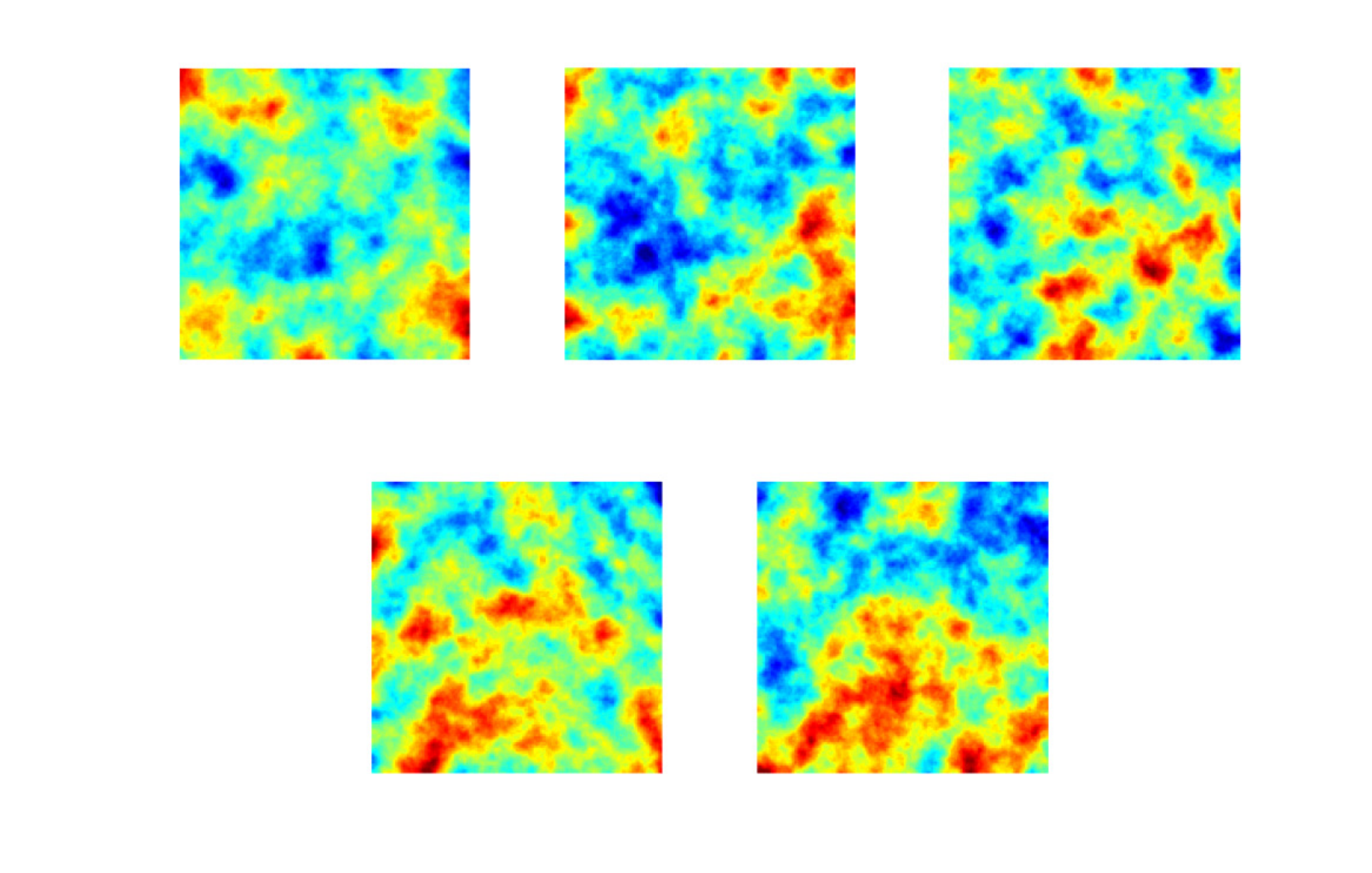}
\caption{Case 1. Progression of EKI through iteration count with prior random draws.}
 \label{fig:1EKI_RD_df}
\end{figure}

\begin{figure}[h!]
\centering
\includegraphics[width=\linewidth]{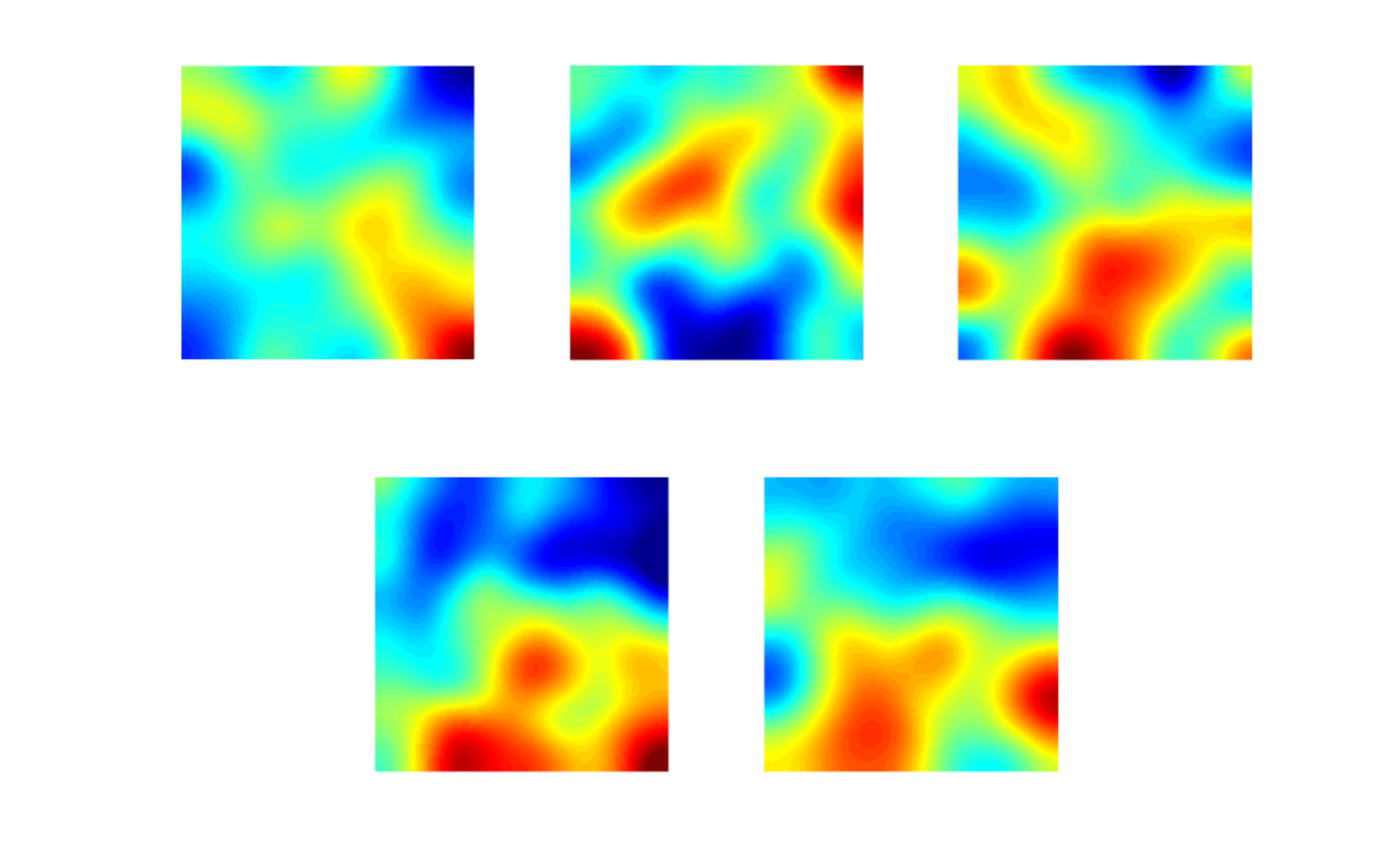}
\caption{Case 1. Progression of TEKI through iteration count with prior random draws.}
 \label{fig:1TEKI_RD_df}
\end{figure}

\newpage

\begin{figure}[h!]
\centering
\includegraphics[width=\linewidth]{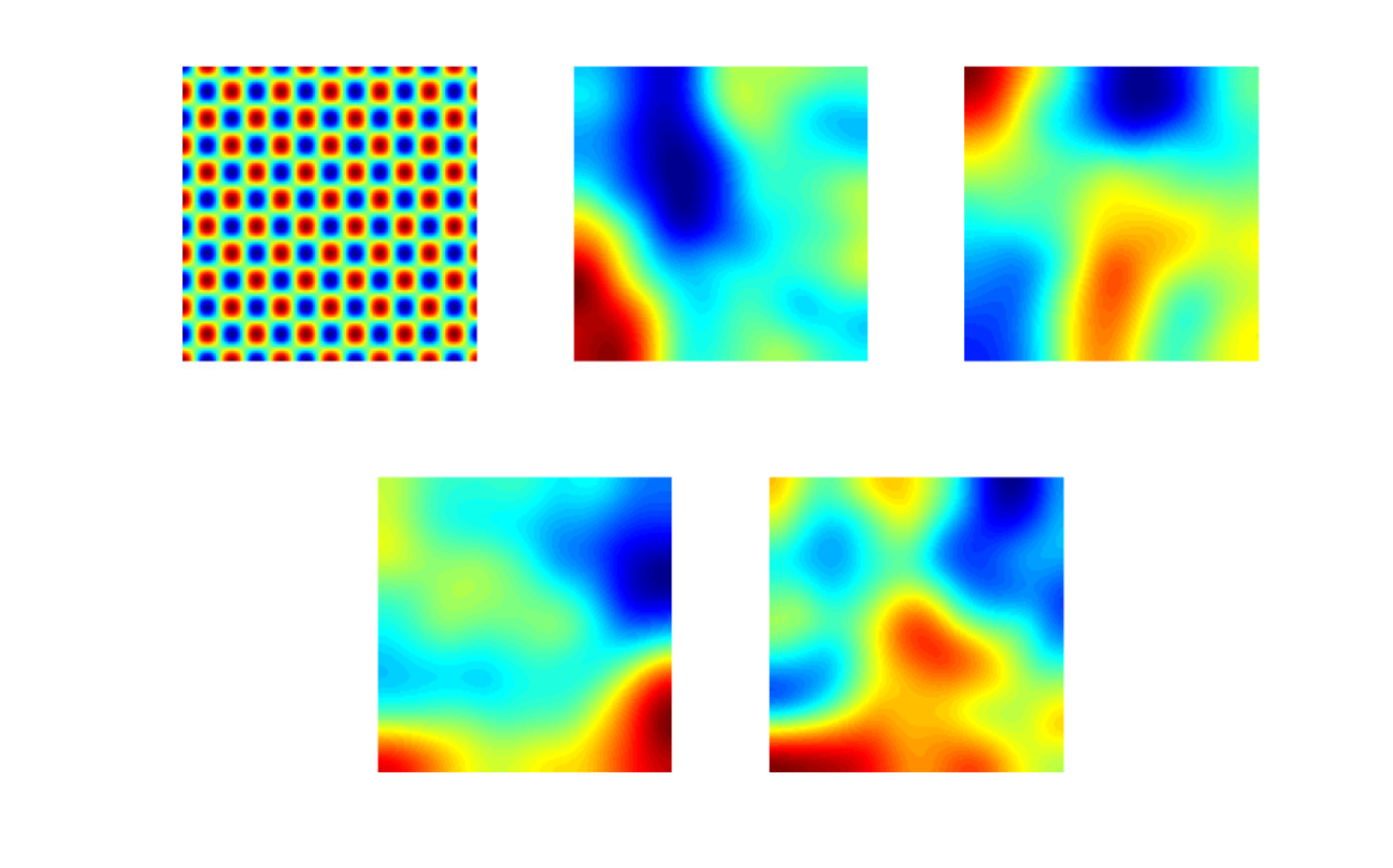}
\caption{Case 1. Progression of EKI through iteration count with prior from KL basis.}
 \label{fig:1EKI_KL_df}
\end{figure}

\begin{figure}[h!]
\centering
\includegraphics[width=\linewidth]{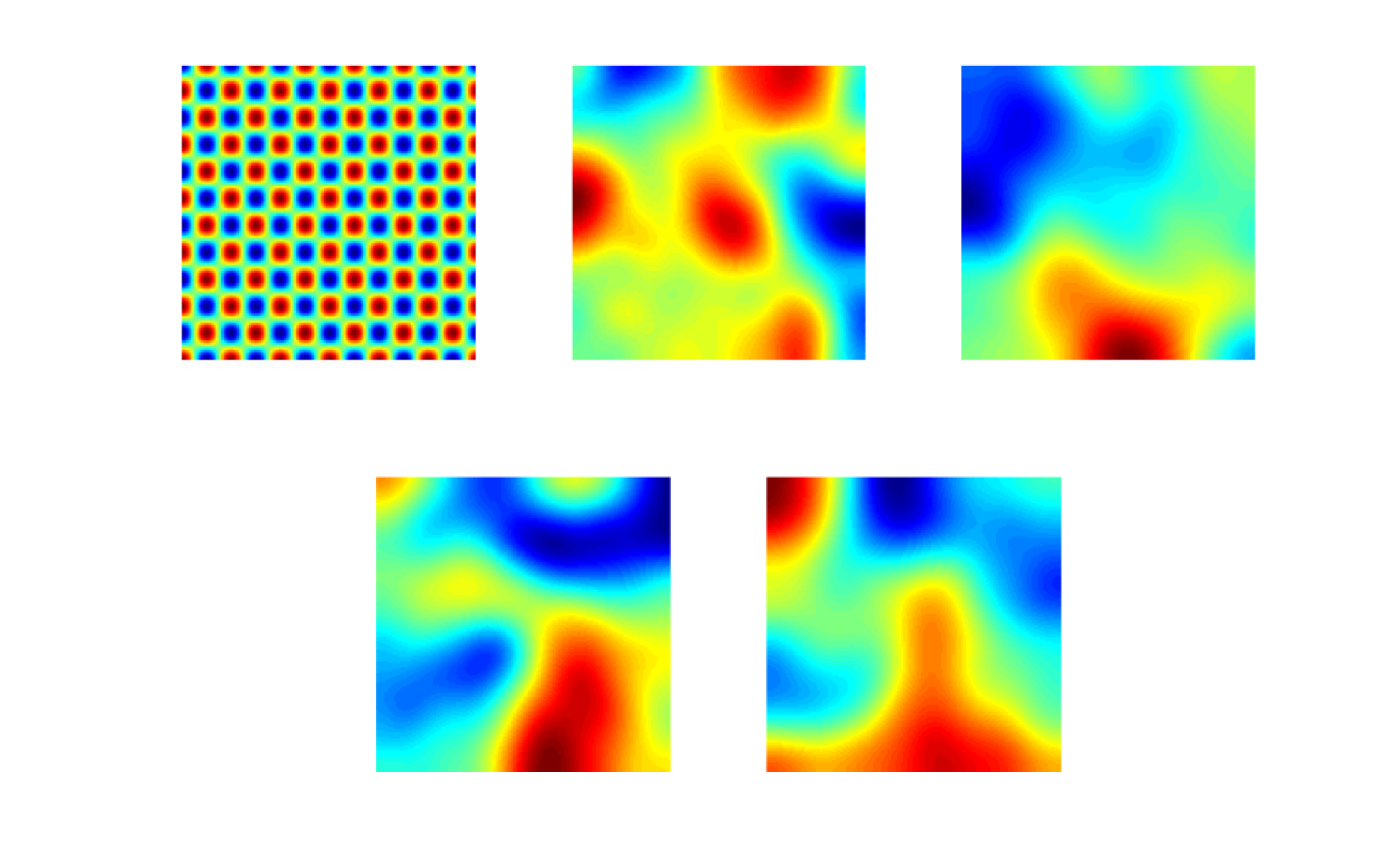}
\caption{Case 1. Progression of TEKI through iteration count with prior from KL basis.}
 \label{fig:1TEKI_KL_df}
\end{figure}

\newpage

\section*{Case 2.}

\begin{figure}[h!]
\centering
\includegraphics[scale=0.25]{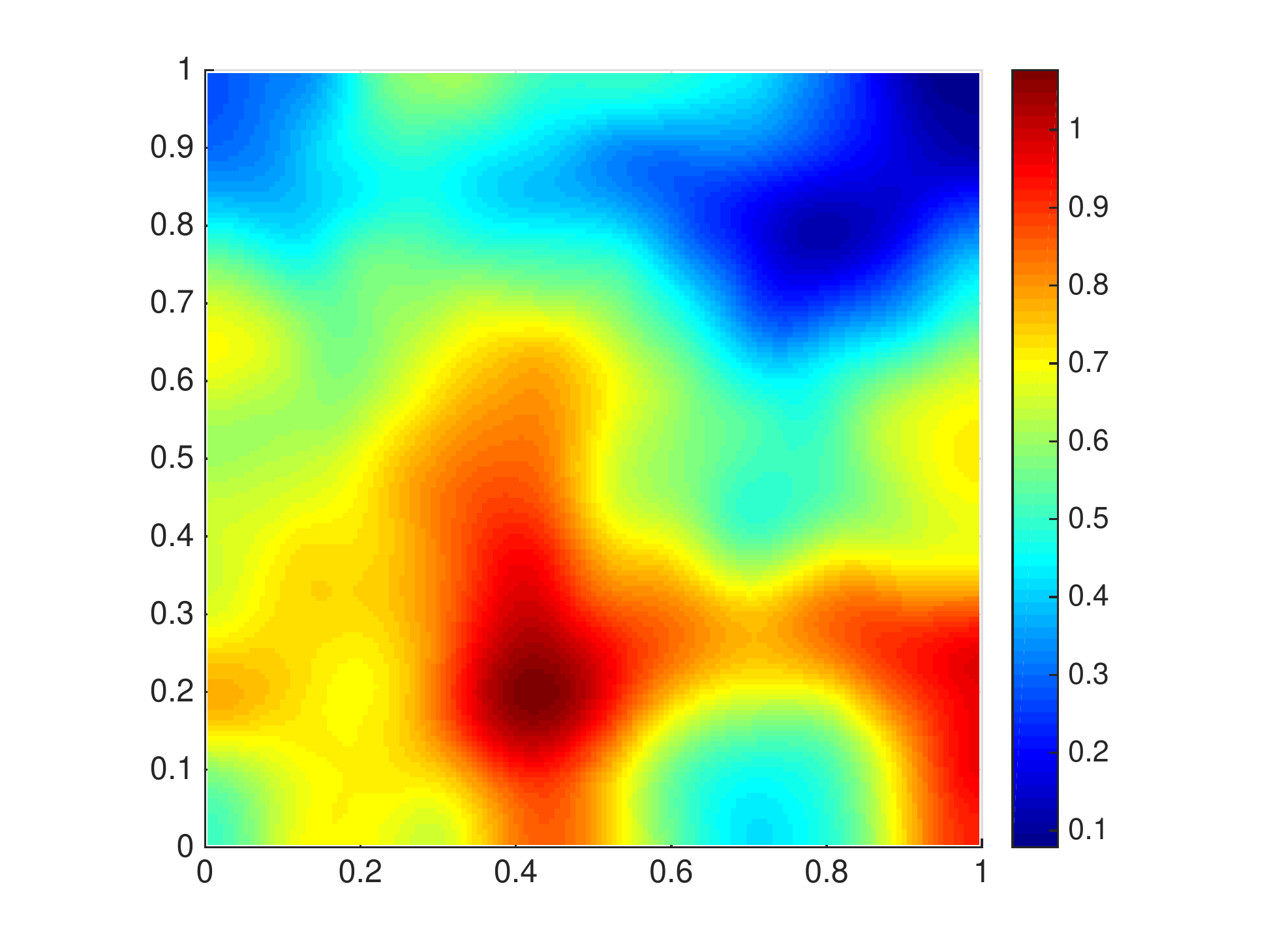}
\caption{Case 2. Gaussian random field truth.}
 \label{fig:truth_2_df}
\end{figure}

\begin{figure}[h!]
\centering
\includegraphics[width=\linewidth]{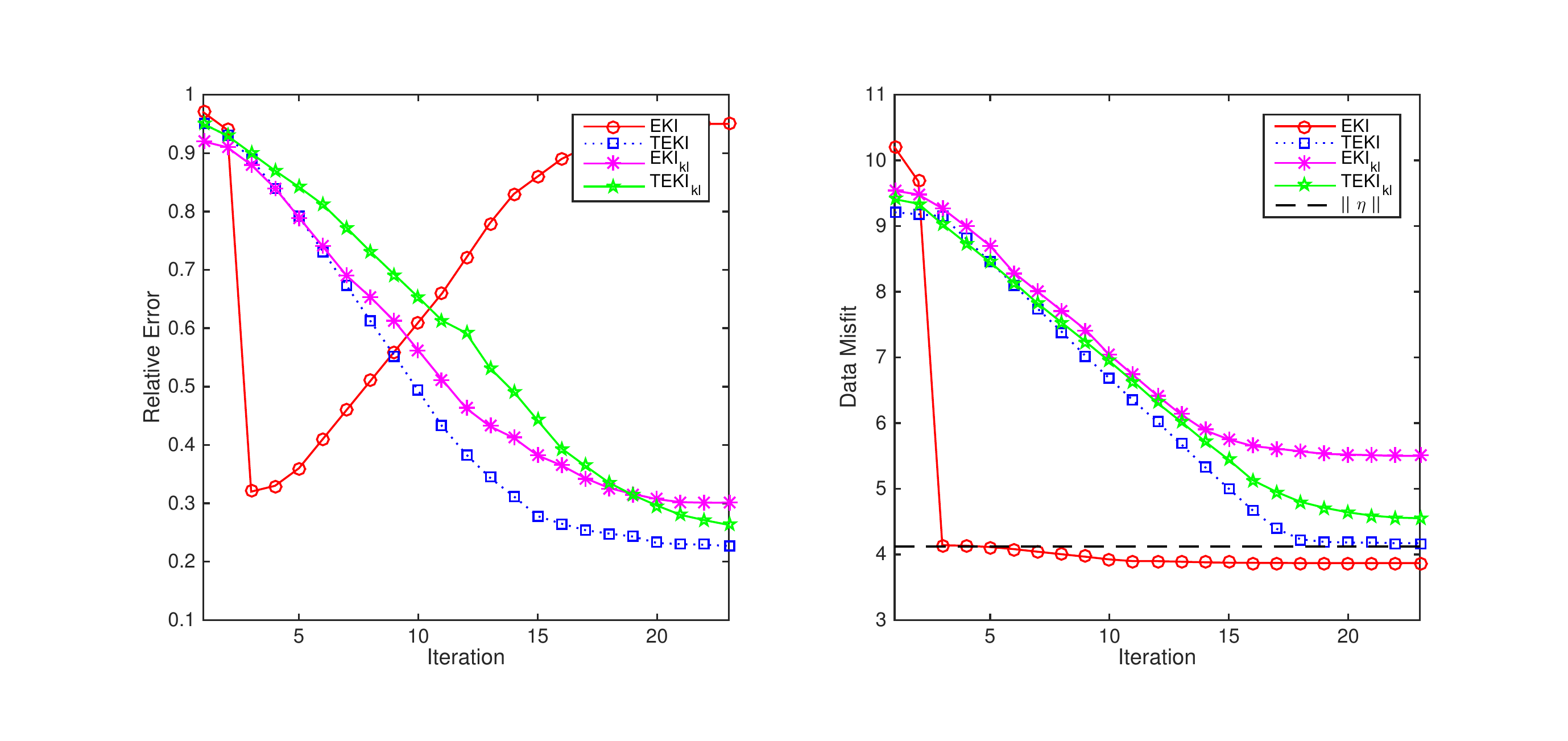}
\caption{Case 2. Relative errors and data misfits of each experiment.}
 \label{fig:RE_DM_2_df}
\end{figure}

\newpage

\begin{figure}[h!]
\centering
\includegraphics[width=\linewidth]{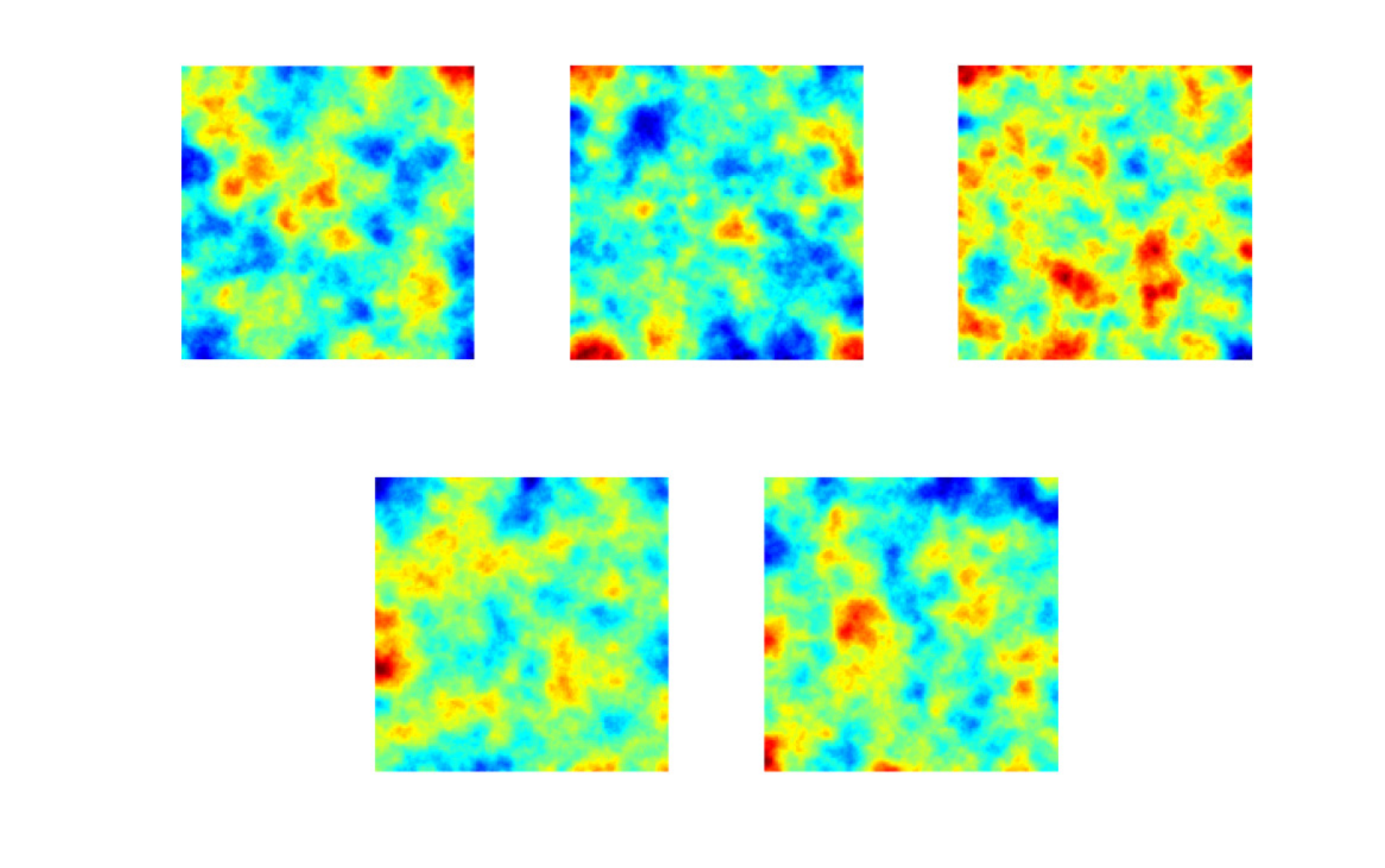}
\caption{Case 2. Progression of EKI through iteration count with prior random draws.}
 \label{fig:2EKI_RD_df}
\end{figure}

\begin{figure}[h!]
\centering
\includegraphics[width=\linewidth]{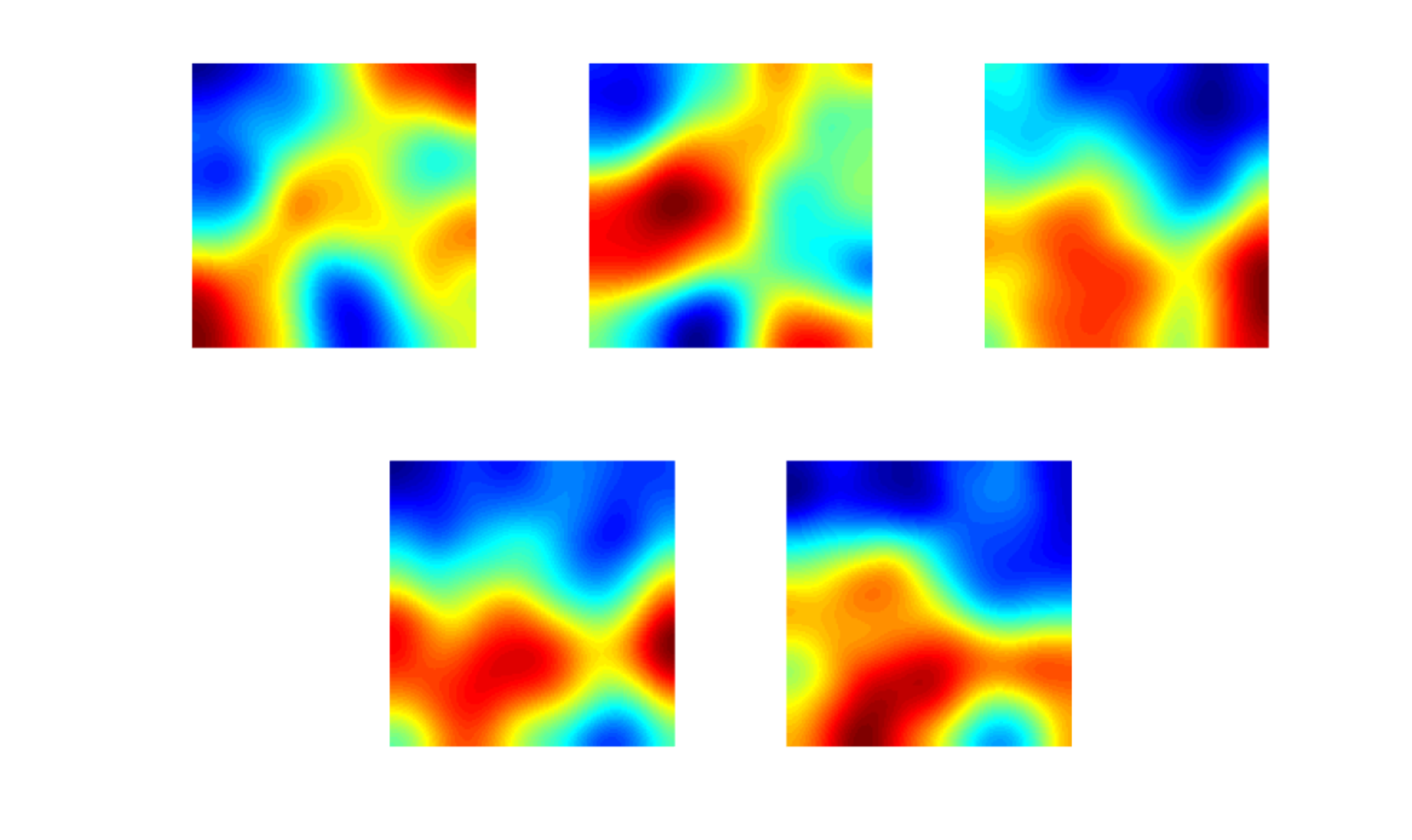}
\caption{Case 2. Progression of TEKI through iteration count with prior random draws.}
 \label{fig:2TEKI_RD_df}
\end{figure}

\newpage

\begin{figure}[h!]
\centering
\includegraphics[width=\linewidth]{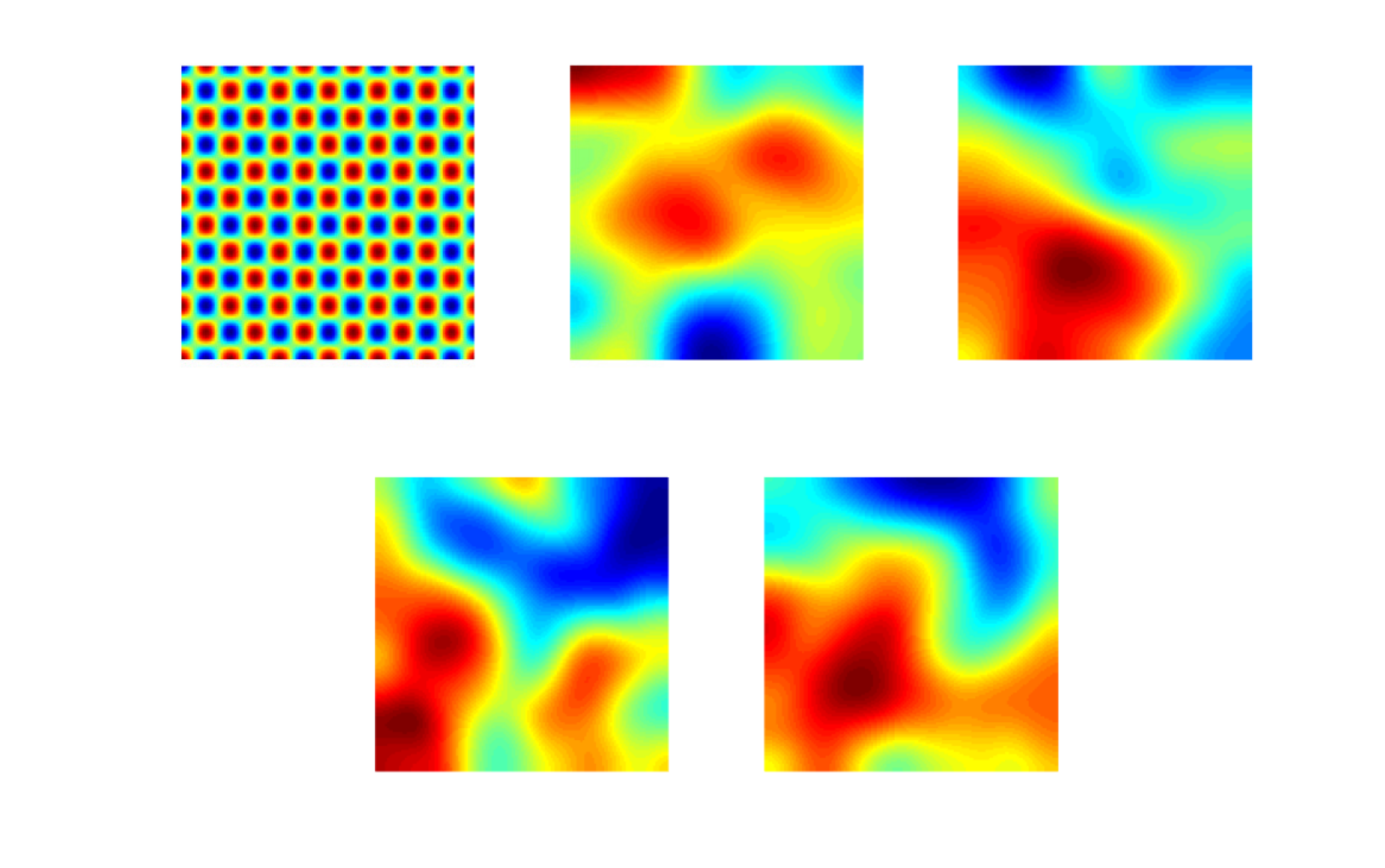}
\caption{Case 2. Progression of EKI through iteration count with prior from KL basis.}
 \label{fig:2EKI_KL_df}
\end{figure}

\begin{figure}[h!]
\centering
\includegraphics[width=\linewidth]{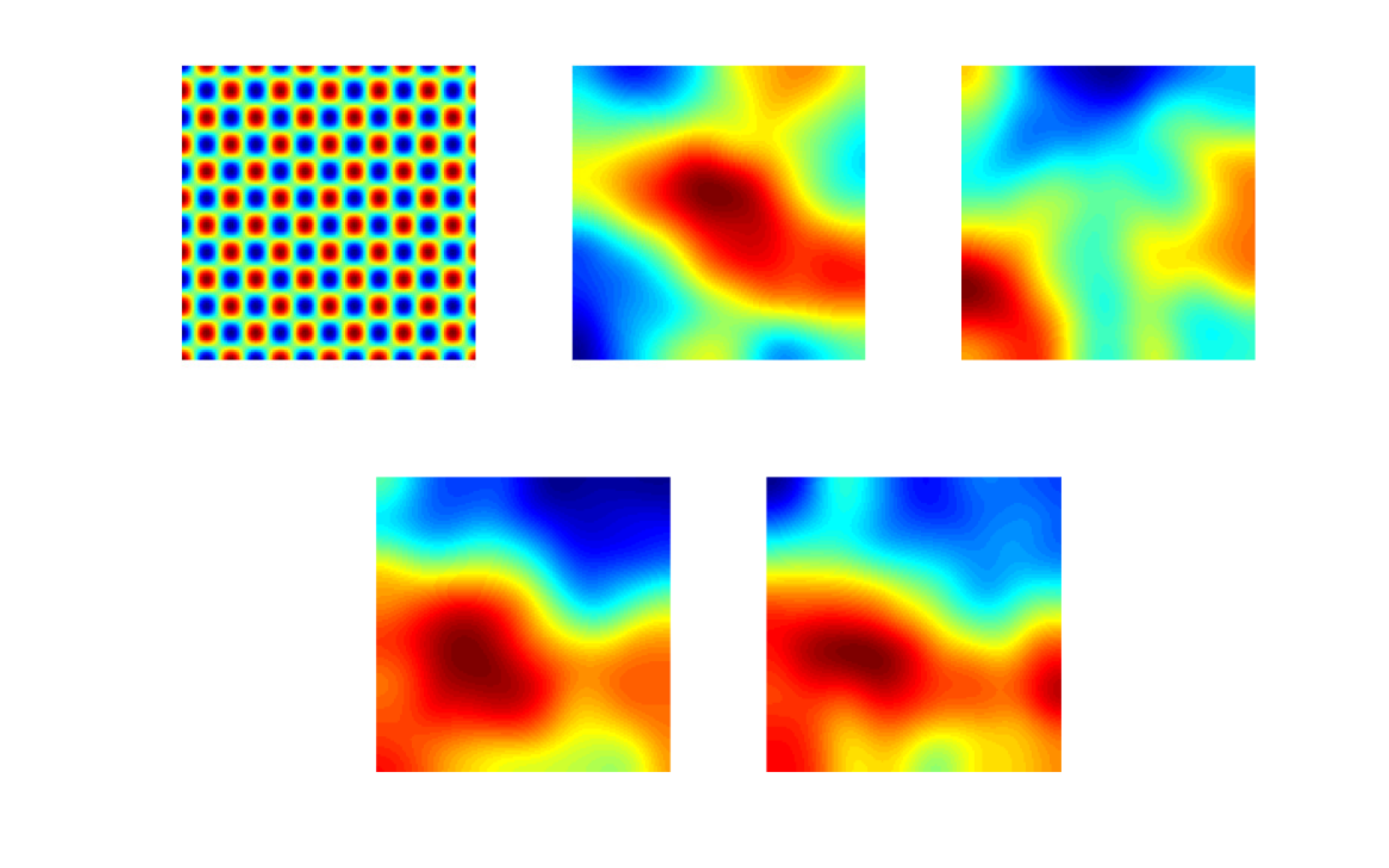}
\caption{Case 2. Progression of TEKI through iteration count with prior from KL basis.}
 \label{fig:2TEKI_KL_df}
\end{figure}

\newpage

\section*{Case 3.}

\begin{figure}[h!]
\centering
\includegraphics[scale=0.25]{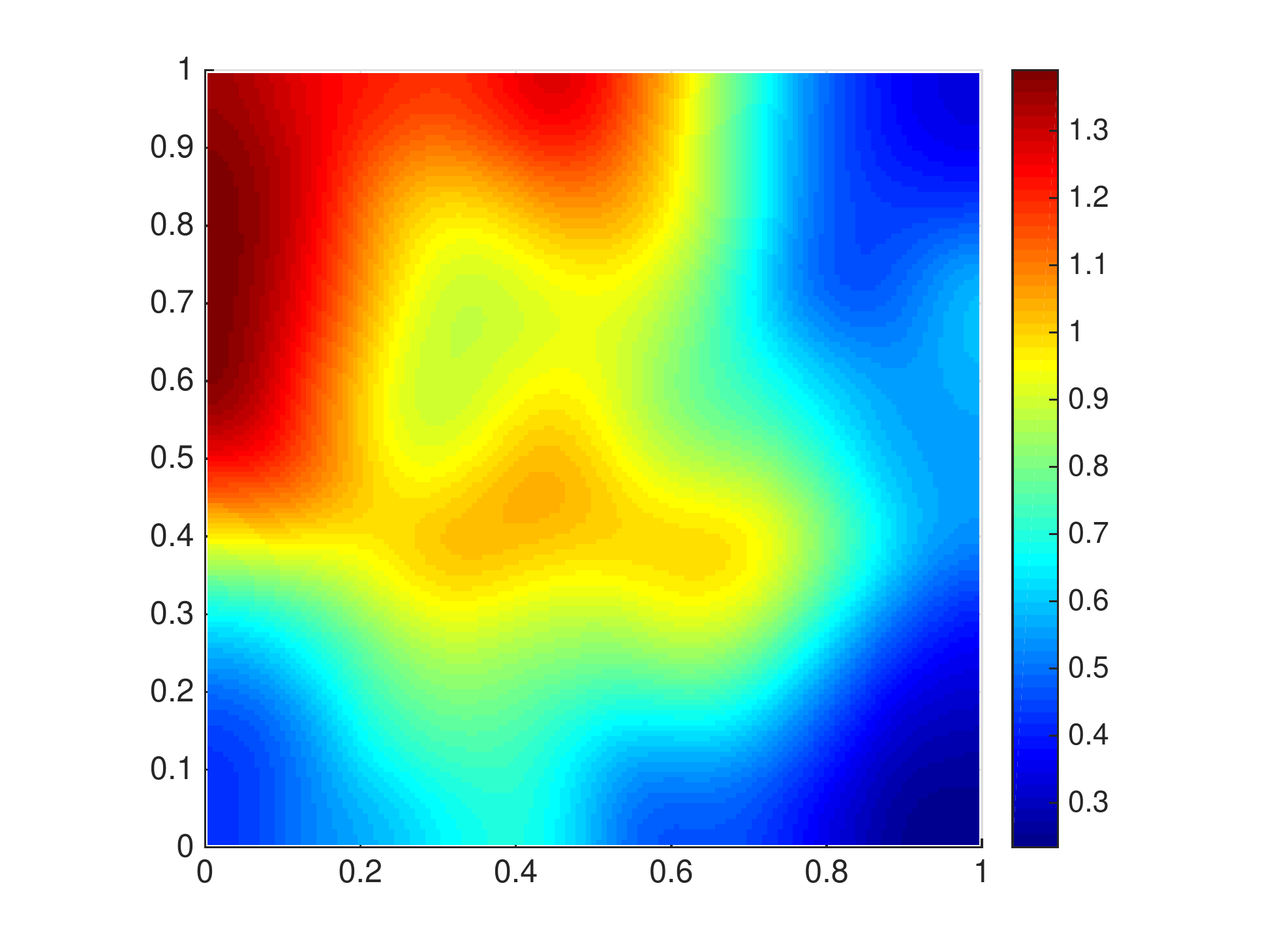}
\caption{Case 3. Gaussian random field truth.}
 \label{fig:truth_3_df}
\end{figure}

\begin{figure}[h!]
\centering
\includegraphics[width=\linewidth]{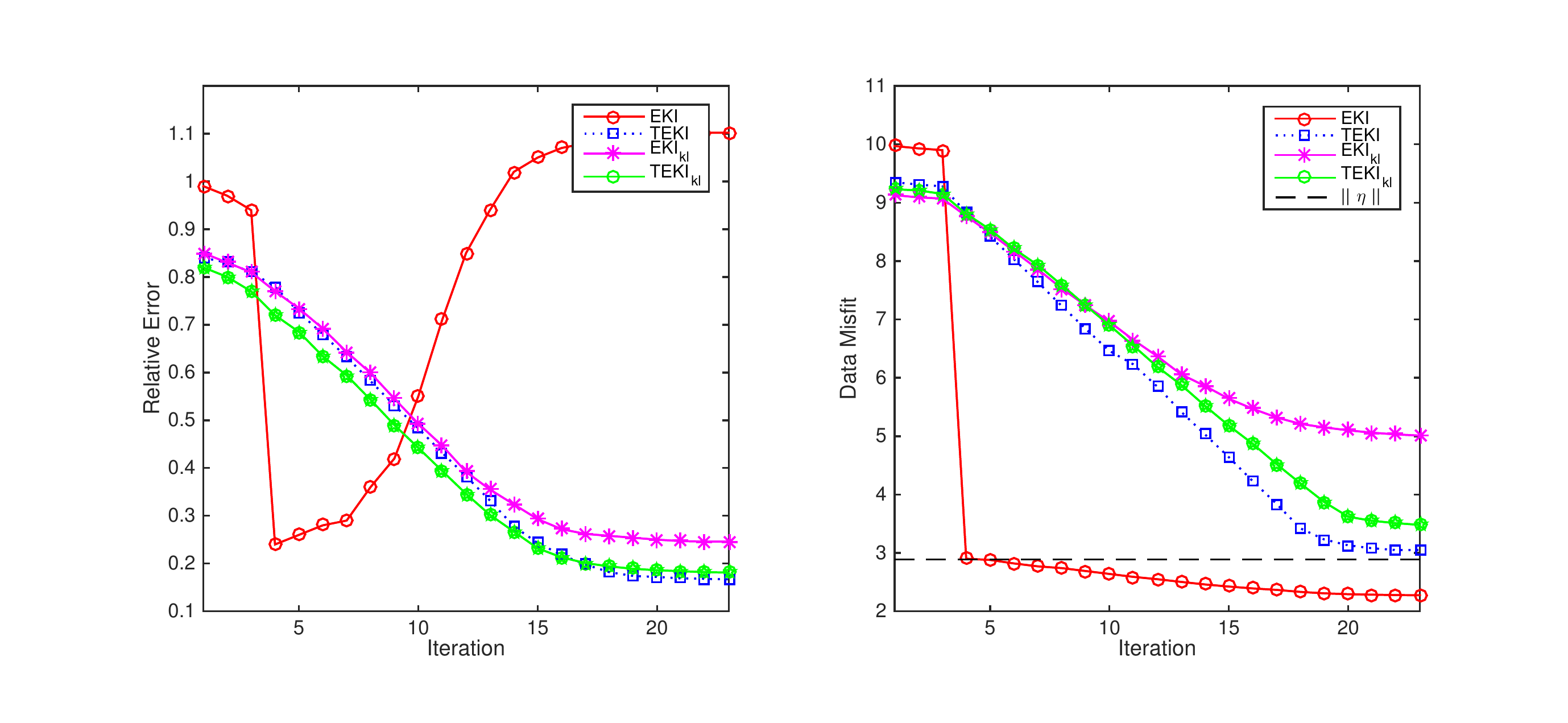}
\caption{Case 3. Relative errors and data misfits of each experiment.}
 \label{fig:RE_DM_3_df}
\end{figure}

\newpage

\begin{figure}[h!]
\centering
\includegraphics[width=\linewidth]{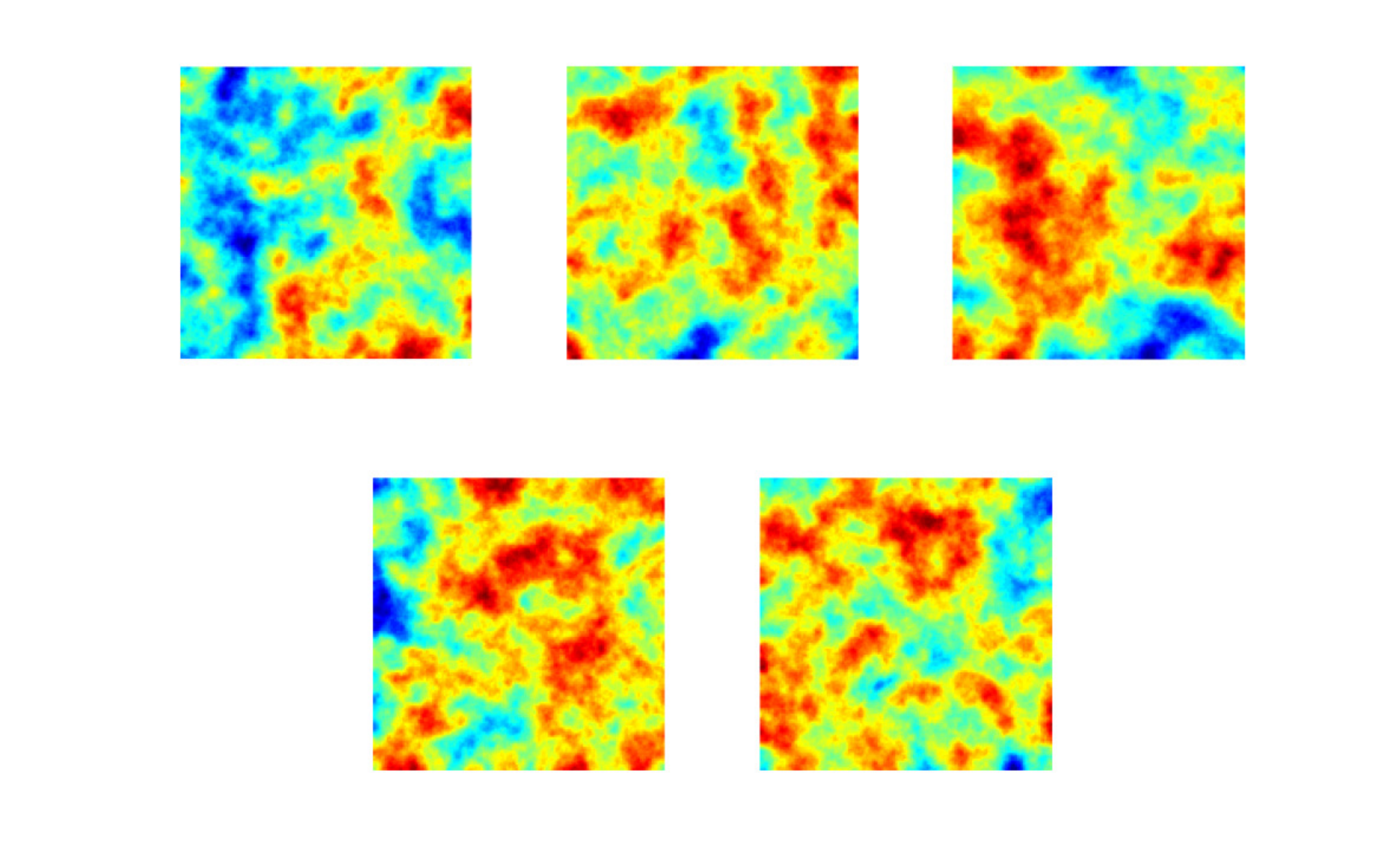}
\caption{Case 3. Progression of EKI through iteration count with prior random draws.}
 \label{fig:3EKI_RD_df}
\end{figure}

\begin{figure}[h!]
\centering
\includegraphics[width=\linewidth]{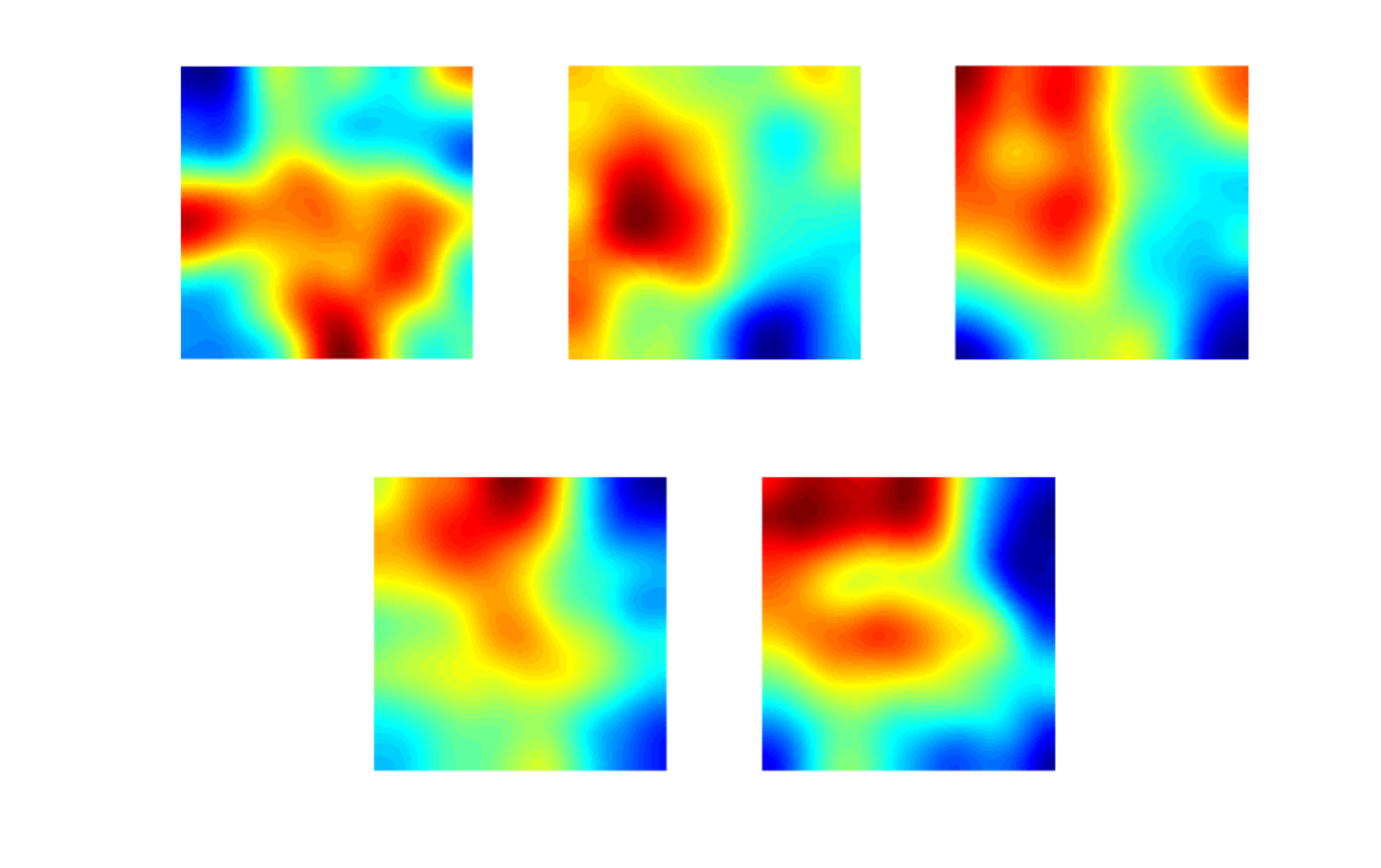}
\caption{Case 3. Progression of TEKI through iteration count with prior random draws.}
 \label{fig:3TEKI_RD_df}
\end{figure}

\newpage

\begin{figure}[h!]
\centering
\includegraphics[width=\linewidth]{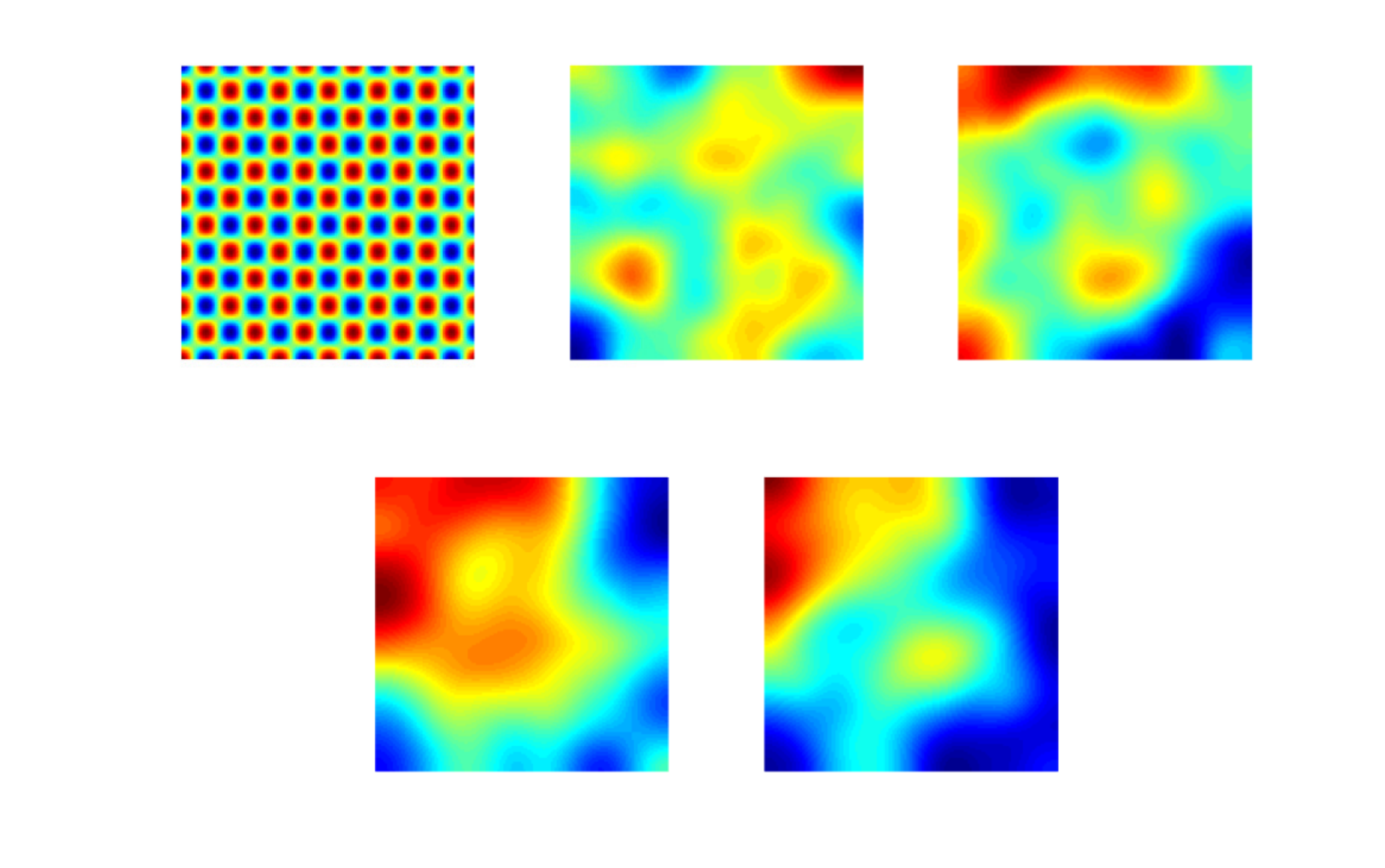}
\caption{Case 3. Progression of EKI through iteration count with prior from KL basis.}
 \label{fig:3EKI_KL_df}
\end{figure}

\begin{figure}[h!]
\centering
\includegraphics[width=\linewidth]{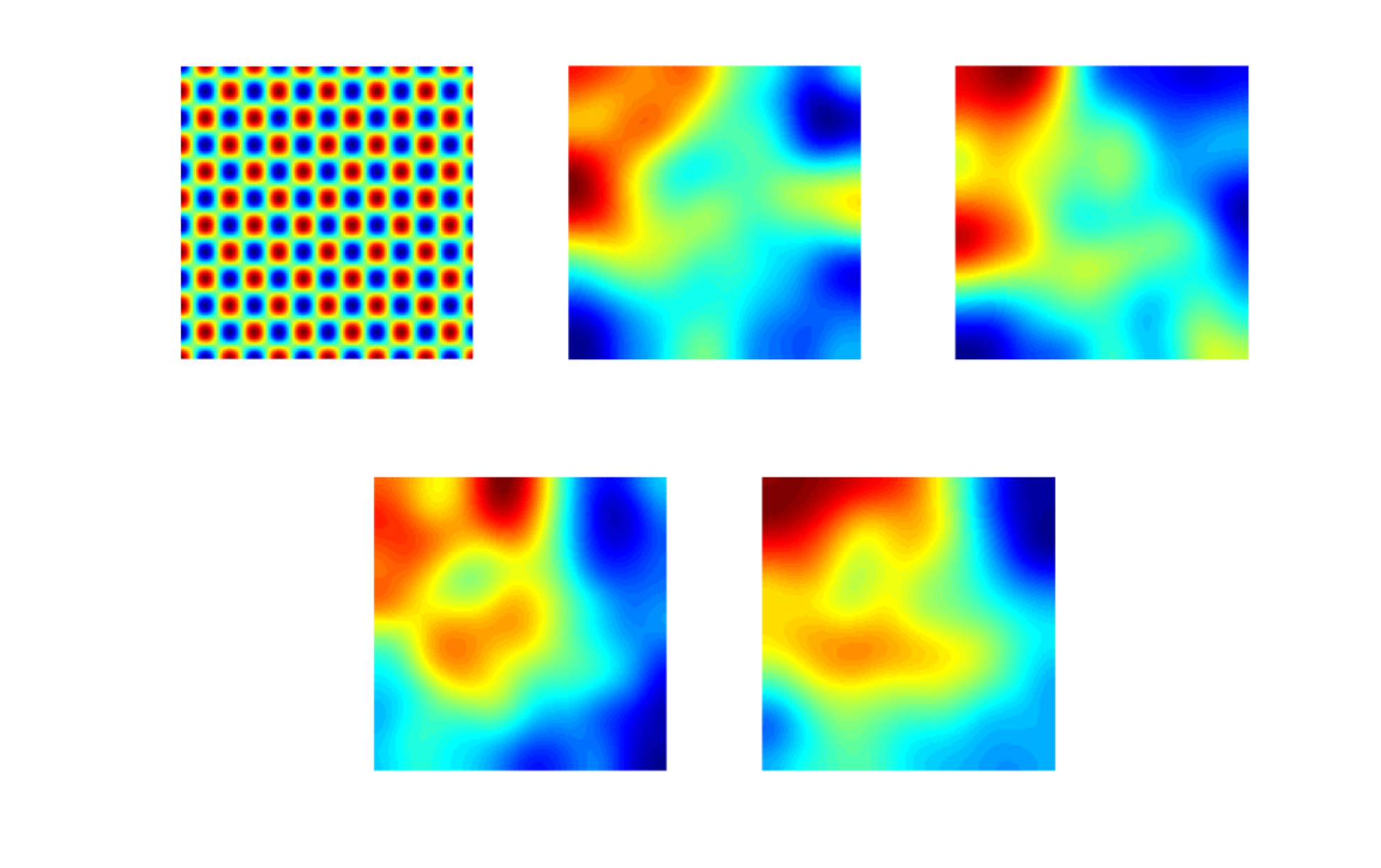}
\caption{Case 3. Progression of TEKI through iteration count with prior from KL basis.}
 \label{fig:3TEKI_KL_df}
\end{figure}

\end{document}